\newtheorem{theorem}{Theorem}[section]
\newtheorem{lemma}[theorem]{Lemma}
\newtheorem{corollary}[theorem]{Corollary}
\newtheorem{question}[theorem]{Question}
\theoremstyle{definition}
\newtheorem{definition}[theorem]{Definition}
\theoremstyle{remark}
\begin{document}

\title[On rectifiable spaces and paratopological groups]
{On rectifiable spaces and paratopological groups}

\author{Fucai Lin*}
\address{Fucai Lin(corresponding author): Department of Mathematics and Information Science,
Zhangzhou Normal University, Zhangzhou 363000, P. R. China}
\email{linfucai2008@yahoo.com.cn}

\author{Rongxin Shen}
\address{Rongxin Shen: Department
of Mathematics, Taizhou Teacher's College, Taizhou 225300, P. R.
China}
\email{srx20212021@163.com}

\thanks{*Supported by the NSFC (No. 10971185) and the Educational Department of Fujian Province (No. JA09166) of China.}

\keywords{rectifiable spaces; paratopological groups; topological
groups; bisequential space; weakly first-countable; Moscow spaces;
Fr$e\acute{}$chet-Urysohn; $k$-gentle; remainders; metrizable.}%insert keywords
\subjclass[2000]{54A25; 54B05; 54E20; 54E35}%insert subject class

%\date{\today}
\begin{abstract}
We mainly discuss the cardinal invariants and generalized metric
properties on paratopological groups or rectifiable spaces, and show
that: (1) If $A$ and $B$ are $\omega$-narrow subsets of a
paratopological group $G$, then $AB$ is $\omega$-narrow in $G$,
which give an affirmative answer for \cite[Open problem
5.1.9]{A2008}; (2) Every bisequential or weakly first-countable
rectifiable space is metrizable; (3) The properties of
Fr$\acute{e}$chet-Urysohn and strongly Fr$\acute{e}$chet-Urysohn are
coincide in rectifiable spaces; (4) Every rectifiable space $G$
contains a (closed) copy of $S_{\omega}$ if and only if $G$ has a
(closed) copy of $S_{2}$; (5) If a rectifiable space $G$ has a
$\sigma$-point-discrete closed $k$-network, then $G$ contains no closed
copy of $S_{\omega_{1}}$; (6) If a rectifiable space $G$ is
pointwise canonically weakly pseudocompact, then $G$ is a Moscow
space. Also, we consider the remainders of paratopological groups or
rectifiable spaces, and give a partial answer to questions posed by C. Liu in
\cite{Liu2009} and C. Liu, S. Lin in \cite{Liu20091}, respectively.
\end{abstract}

\maketitle

\section{Introduction}
Recall that a {\it topological group} $G$ is a group $G$ with a
(Hausdorff) topology such that the product maps of $G \times G$ into
$G$ is jointly continuous and the inverse map of $G$ onto itself
associating $x^{-1}$ with arbitrary $x\in G$ is continuous. A {\it
paratopological group} $G$ is a group $G$ with a topology such that
the product maps of $G \times G$ into $G$ is jointly continuous. A
topological space $G$ is said to be a {\it rectifiable space}
provided that there are a surjective homeomorphism $\varphi :G\times
G\rightarrow G\times G$ and an element $e\in G$ such that
$\pi_{1}\circ \varphi =\pi_{1}$ and for every $x\in G$ we have
$\varphi (x, x)=(x, e)$, where $\pi_{1}: G\times G\rightarrow G$ is
the projection to the first coordinate. If $G$ is a rectifiable
space, then $\varphi$ is called a {\it rectification} on $G$. It is
well known that rectifiable spaces and paratopological groups are
all good generalizations of topological groups. In fact, for a
topological group with the neutral element $e$, then it is easy to
see that the map $\varphi (x, y)=(x, x^{-1}y)$ is a rectification on
$G$. However, there exists a paratopological group which is not a
rectifiable space; Sorgenfrey line (\cite[Example
1.2.2]{E1989}) is such an example. Also, the 7-dimensional sphere $S_{7}$ is
rectifiable but not a topological group \cite[$\S$ 3]{V1990}.
Further, it is easy to see that paratopological groups and
rectifiable spaces are all homogeneous.

By a remainder of a space $X$ we understand the subspace
$bX\setminus X$ of a Hausdorff compactification $bX$ of $X$.

In this article, we mainly consider the cardinal invariants and
generalized metric properties of paratopological groups or
rectifiable spaces. We also consider the question: When does a
Tychonoff paratopological group or rectifiable space $X$ have a
Hausdorff compactification $bX$ with a remainder belonging to a
given class of spaces?

\maketitle

\section{Preliminaries}
Let $X$ be a space. A collection of nonempty open sets $\mathscr{U}$
of $X$ is called a {\it $\pi$-base at point $x$} if for every
nonempty open set $O$ with $x\in O$, there exists an
$U\in\mathscr{U}$ such that $U\subset O$. The {\it $\pi$-character}
of $x$ in $X$ is defined by $\pi\chi(x,
X)=\mbox{min}\{|\mathscr{U}|:\mathscr{U}\ \mbox{is a}\
\pi\mbox{-base at point}\ x\ \mbox{in}\ X\}$. The {\it
$\pi$-character of $X$} is defined by
$\pi\chi(X)=\mbox{sup}\{\pi\chi(x, X):x\in X\}$.

\begin{definition}\cite{A2008}
Let $\zeta$ and $\eta$ be any family of non-empty subsets of $X$.
\begin{enumerate}
\item The family $\zeta$ is called a {\it prefilter} on a space $X$ if, whenever $P_{1}$
and $P_{2}$ are in $\zeta$, there is a $P\in\zeta$ such that
$P\subset P_{1}\cap P_{2}$;

\item A prefilter $\zeta$ on a space $X$ is
said to {\it converge to a point} $x\in X$ if every open
neighborhood of $x$ contains an element of $\zeta$;

\item If $x\in X$
belongs to the closure of every element of a prefilter $\zeta$ on
$X$, we say that $\zeta$ {\it accumulates to} $x$ or {\it a cluster
point for} $\zeta$;

\item Two prefilters $\zeta$ and $\eta$ are called to
be {\it synchronous} if, for any $P\in\zeta$ and any $Q\in\eta$,
$P\cap Q\neq\emptyset$;

\item A space $X$ is called {\it bisequential}
\cite{A2008} if, for every prefilter $\zeta$ on $X$ accumulating to
a point $x\in X$, there exists a countable prefilter $\xi$ on $X$
converging to the same point $x$ such that $\zeta$ and $\xi$ are
synchronous.
\end{enumerate}
\end{definition}

\begin{definition}
Let $\mathscr{P}=\bigcup_{x\in X}\mathscr{P}_{x}$ be a cover of a
space $X$ such that for each $x\in X$, (a) if $U,V\in
\mathscr{P}_{x}$, then $W\subset U\cap V$ for some $W\in
\mathscr{P}_{x}$; (b) the family $\mathscr{P}_{x}$ is a network of $x$ in $X$,
i.e., $x\in\bigcap\mathscr{P}_x$, and if $x\in U$ with $U$ open in
$X$, then $P\subset U$ for some $P\in\mathscr P_x$.

The family $\mathscr{P}$ is called a {\it weak base} for $X$ \cite{A1966} if, for every $G\subset X$, the set $G$ must be open in $X$ whenever for each $x\in G$ there exists $P\in
\mathscr{P}_{x}$ such that $P\subset G$.
The space $X$ is {\it weakly first-countable} if $\mathscr{P}_{x}$ is countable for each
$x\in X$.
\end{definition}

\begin{definition}Let $\kappa$ be an infinite cardinal.

\begin{enumerate}
\item A space $X$ is called an {\it
$S_{\kappa}$}-{space} if $X$ is obtained by identifying all the limit
points of $\kappa$ many convergent sequences;

\item A space $X$ is called an {\it $S_{2}$}-{space} ({\it Arens' space})  if
$X=\{\infty\}\cup \{x_{n}: n\in \mathbb{N}\}\cup\{x_{n}(m): m, n\in
\mathbb{N}\}$ and the topology is defined as follows: Each
$x_{n}(m)$ is isolated; a basic neighborhood of $x_{n}$ is
$\{x_{n}\}\cup\{x_{n}(m): m>k, \mbox{for some}\ k\in \mathbb{N}\}$;
a basic neighborhood of $\infty$ is $\{\infty\}\cup (\bigcup\{V_{n}:
n>k\ \mbox{for some}\ k\in \mathbb{N}\})$, where $V_{n}$ is a
neighborhood of $x_{n}$.
\end{enumerate}
\end{definition}

\begin{theorem}\cite{C1992, G1996, V1989}\label{t9}
A topological space $G$ is rectifiable if and only if there exists $e\in G$ and two
continuous maps $p: G^{2}\rightarrow G$, $q: G^{2}\rightarrow G$
such that for any $x\in G, y\in G$ the next
identities hold:
$$p(x, q(x, y))=q(x, p(x, y))=y, q(x, x)=e.$$
\end{theorem}

In fact, we can assume that $p=\pi_{2}\circ \varphi^{-1}$ and
$q=\pi_{2}\circ \varphi$ in Theorem~\ref{t9}. Fixed a point $x\in
G$, then $f_{x}, g_{x}: G\rightarrow G$ defined with $f_{x}(y)=p(x,
y)$ and $g_{x}(y)=q(x, y)$, for each $y\in G$, are homeomorphism,
respectively. We denote $f_{x}, g_{x}$ with $p(x, G), q(x, G)$,
respectively.

Let $G$ be a rectifiable space, and let $p$ be the multiplication on
$G$. Further, we sometime write $x\cdot y$ instead of $p(x, y)$ and
$A\cdot B$ instead of $p(A, B)$ for any $A, B\subset G$. Therefore,
$q(x, y)$ is an element such that $x\cdot q(x, y)=y$; since $x\cdot
e=x\cdot q(x, x)=x$ and $x\cdot q(x, e)=e$, it follows that $e$ is a right neutral
element for $G$ and $q(x, e)$ is a right inverse for $x$. Hence a
rectifiable space $G$ is a topological algebraic system with
operation $p, q$, 0-ary operation $e$ and identities as above. It is
easy to see that this algebraic system need not to satisfy
the associative law about the multiplication operation $p$. Clearly,
every topological loop is rectifiable.

All spaces are $T_1$ and regular unless stated otherwise.
The notation $\mathbb{N}$ denotes the set of all positive natural numbers. The letter $e$
denotes the neutral element of a group and the right neutral element
of a rectifiable space, respectively. Readers may refer to
\cite{A2008, E1989, Gr1984} for notations and terminology not
explicitly given here.
\bigskip

\section{Cardinal invariants in paratopological groups or rectifiable spaces}
A subset $B$ of a paratopological group $G$ is called {\it
$\omega$-narrow} in $G$ if, for each neighborhood $U$ of the
identity in $G$, there is a countable subset $F$ of $G$ such that
$B\subset FU\cap UF$.

\begin{question}\cite[Open problem 5.1.9]{A2008}
Let $A$ and $B$ be $\omega$-narrow subsets of a paratopological
group $G$. Is the set $AB$ necessarily $\omega$-narrow in $G$?
\end{question}

Now we give an affirmative answer to this question.

\begin{theorem}
Let $A$ and $B$ be $\omega$-narrow subsets of a paratopological
group $G$. Then $AB$ is $\omega$-narrow in $G$.
\end{theorem}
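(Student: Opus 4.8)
The plan is to reduce the two-sided condition to two one-sided covering statements and then exploit the fact that, although inversion need not be continuous in a paratopological group, all left and right translations are homeomorphisms. First I would observe that it suffices to find, for a fixed neighborhood $U$ of $e$, a countable set $F_1$ with $AB\subset F_1U$ and a countable set $F_2$ with $AB\subset UF_2$; then $F=F_1\cup F_2$ witnesses that $AB$ is $\omega$-narrow, since $AB\subset F_1U\subset FU$ and $AB\subset UF_2\subset UF$. Thus the problem splits into a right-sided version and a symmetric left-sided version.

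The key enabling observation, and the device that compensates for the missing continuity of inversion, is that for each fixed $b\in G$ the conjugation $c_b(x)=bxb^{-1}$ is a homeomorphism of $G$ fixing $e$. Indeed $c_b=L_b\circ R_{b^{-1}}$ is a composition of translations, and in any paratopological group $L_b$ and $R_{b^{-1}}$ are homeomorphisms because multiplication is jointly continuous (their inverses $L_{b^{-1}}$ and $R_b$ are again translations). Consequently $bVb^{-1}$ and $b^{-1}Vb$ are neighborhoods of $e$ whenever $V$ is. This is exactly what lets me slide a single group element past a basic neighborhood, the step that would otherwise force me to invoke $U^{-1}$.

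For the right-sided version I would fix $U$, choose a neighborhood $V$ of $e$ with $VV\subset U$, and use $\omega$-narrowness of $B$ to write $B\subset F_BV$ with $F_B$ countable, so that $AB\subset\bigcup_{b\in F_B}AbV$. It then suffices to cover each $AbV$ by countably many right translates of $U$. For a fixed $b$, I apply $\omega$-narrowness of $A$ to the neighborhood $bVb^{-1}$ (a neighborhood of $e$ by the observation above) to obtain a countable $S_b$ with $A\subset S_b\,(bVb^{-1})$; then $Ab\subset S_bbV$ and hence $AbV\subset (S_bb)\,VV\subset (S_bb)U$. Taking the union of the countable sets $S_bb$ over $b\in F_B$ yields a countable $F_1$ with $AB\subset F_1U$.

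The left-sided version is entirely symmetric: writing $A\subset VF_A$ and handling each $VaB$ by pushing $a$ through $V$ via the neighborhood $a^{-1}Va$ together with the $\omega$-narrowness of $B$ gives $VaB\subset U(aE_a)$ for a countable $E_a$, and unioning over $a\in F_A$ produces a countable $F_2$ with $AB\subset UF_2$. The only genuine obstacle is the appearance of the two-sided translate $aUb$ (equivalently the middle factor in $S_b(bVb^{-1})b$), which cannot be absorbed by one-sided covers alone; the conjugation-homeomorphism fact dissolves precisely this difficulty, after which the argument reduces to routine bookkeeping with countable unions of countable sets.
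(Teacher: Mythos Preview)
Your proof is correct and follows essentially the same approach as the paper's: both choose $V$ with $V^{2}\subset U$, cover $B$ by countably many left translates $bV$, and then for each such $b$ exploit the fact that conjugation by $b$ is a homeomorphism to cover $A$ so that $AbV\subset (S_{b}b)U$. The only cosmetic difference is that the paper first picks an auxiliary neighborhood $W_{y}$ with $y^{-1}W_{y}y\subset V$ and applies $\omega$-narrowness of $A$ to $W_{y}$, whereas you apply it directly to the conjugate $bVb^{-1}$; the substance is identical.
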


\begin{proof}
Let $U$ be a neighborhood of the neutral element $e$ in $G$. Since $G$ is a
paratopological group, we can choose an open neighborhood $V$ of $e$
in $G$ with $V^{2}\subset U$. Since $B$ is $\omega$-narrow, there is
a subset $C$ of $G$ with $|C|\leq\omega$ satisfying $B\subset CV$.
For every $y\in C$, we can choose a neighborhood $W_{y}$ of $e$ in
$G$ such that $y^{-1}W_{y}y\subset V$. Since $A$ is $\omega$-narrow
in $G$, for every $y\in C$, there is a subset $K_{y}$ of $G$ with
$|K_{y}|\leq\omega$ and $A\subset K_{y}W_{y}$. Let $K=\bigcup_{y\in
C}K_{y}$ and $M=KC$. Obviously, $|M|\leq\omega$. Now we show that
$AB\subset MU$. Assume that $a\in A$ and $b\in B$. Then there is a
point $y\in C$ such that $b\in yV$. Therefore, there exists a point
$x\in K_{y}$ with $a\in xW_{y}$. Hence we have $$ab\in
xW_{y}yV=xy(y^{-1}W_{y}y)V\subset xyVV\subset xyU,$$that is, $ab\in
MU$. Thus we proved that $AB\subset MU$. Similarly, we can prove that
there exists a subset $P$ of $G$ with $|P|\leq\omega$ and $AB\subset
UP$. Put $D=M\cup P$. Obviously, we have $AB\subset DU\cap UD$ and
$|D|\leq\omega$. Therefore, the product $AB$ is $\omega$-narrow in
$G$.
\end{proof}

It is well known that a bisequential or weakly first-countable
topological group is metrizable. Now, we show that a bisequential or
weakly first-countable rectifiable space is also metrizable.

\begin{theorem}
Every bisequential rectifiable space $G$ is metrizable.
\end{theorem}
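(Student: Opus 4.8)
The plan is to reduce metrizability to first countability and then to establish first countability through the filter characterization of bisequentiality. Since every rectifiable space is homogeneous, it suffices to produce a countable neighbourhood base at the right neutral element $e$: once $\chi(e,G)\le\omega$, homogeneity gives first countability everywhere, and the metrizability of a first-countable rectifiable space then follows from the structure theory of such spaces (the rectifiable analogue of the Birkhoff--Kakutani theorem, available through \cite{C1992, G1996, V1989}). Thus the entire problem is to show that bisequentiality forces $\chi(e,G)\le\omega$.

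First I would record the easy half. A bisequential space is strongly Fr\'echet--Urysohn, so that for every decreasing sequence of subsets clustering at $e$ one can select points converging to $e$. This controls \emph{countable} filters, but it cannot by itself bound the character: a space may be bisequential and still fail to be first countable, so the implication is false without extra structure. The additional input must come from the rectifiable operations $p$ and $q$ of Theorem~\ref{t9}. For each fixed $x$ the maps $p(x,\cdot)$ and $q(x,\cdot)$ are self-homeomorphisms of $G$ satisfying $p(x,q(x,y))=q(x,p(x,y))=y$ and $q(x,x)=e$, and these play the role that left translations play in the topological-group version of the statement.

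The core of the argument I would run by contradiction. Assume $\chi(e,G)\ge\omega_{1}$. Using regularity together with the joint continuity of $p$ and $q$, I would construct by transfinite recursion of length $\omega_{1}$ a family of nonempty sets whose generated prefilter $\zeta$ accumulates to $e$, arranged so that the ``directions'' along which $\zeta$ approaches $e$ cannot be captured by countably many sets; at each stage the homeomorphism $q(x,\cdot)$ is used to move a freshly chosen neighbourhood into position without collapsing the earlier choices. Bisequentiality would then supply a countable prefilter $\xi=\{Q_{n}\}$ converging to $e$ and synchronous with $\zeta$. Since $\xi$ converges, every neighbourhood of $e$ contains some $Q_{n}$; combined with synchronicity against the $\omega_{1}$-spread of $\zeta$, this forces the countably many $Q_{n}$ to meet uncountably many incompatible constraints, which is impossible. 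The resulting contradiction gives $\chi(e,G)\le\omega$, and the reduction of the first paragraph finishes the proof.

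The main obstacle is precisely this recursive construction of $\zeta$, and the reason the group-theoretic template does not transfer verbatim. For topological groups one spreads the accumulating filter by genuine translations and relies on associativity to keep the pieces coherent; here the multiplication $p$ need not be associative, so I expect the delicate point to be verifying that the homeomorphisms $q(x,\cdot)$ can be composed and compared across the $\omega_{1}$ stages so that $\zeta$ keeps accumulating at $e$ while still defeating every countable synchronous prefilter. Carrying out this bookkeeping with only the identities $p(x,q(x,y))=q(x,p(x,y))=y$ and $q(x,x)=e$ at one's disposal, rather than full cancellation, is where the real work lies.
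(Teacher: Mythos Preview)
Your reduction to first countability at $e$ is correct, and invoking Gul'ko's metrization theorem for the final step matches the paper. But the core of your argument---the transfinite construction of a prefilter $\zeta$ that defeats every countable synchronous prefilter---is not carried out; you yourself flag it as ``the main obstacle'' and describe only what such a construction would have to achieve. As written, this is a plan with an acknowledged gap, not a proof.

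More to the point, the contradiction route is unnecessary. The paper proceeds directly, using the fact (Lemma~4.7.11 in \cite{A2008}) that every bisequential space admits a countable \emph{open} prefilter $\gamma$ converging to some point; by homogeneity one may take that point to be $e$. From $\gamma$ a countable neighbourhood base at $e$ is obtained in one step: for each $B\in\gamma$ the set $q(B,B)$ contains $e$ (since $q(b,b)=e$ for every $b\in B$) and is a neighbourhood of $e$ (fix any $b\in B$; then $q(b,\cdot)$ is a homeomorphism, so $q(b,B)\subset q(B,B)$ is open and contains $e$). Given any open $U\ni e$, continuity of $q$ at $(e,e)$ yields open $V\ni e$ with $q(V,V)\subset U$, and convergence of $\gamma$ to $e$ gives $B\in\gamma$ with $B\subset V$, whence $e\in q(B,B)\subset U$. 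Thus $\{q(B,B):B\in\gamma\}$ is a countable base at $e$.

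The idea you were missing is that bisequentiality already delivers countable $\pi$-character (a countable open prefilter converging to each point), and the rectifiable operation $q$ upgrades such a $\pi$-base at $e$ to a genuine neighbourhood base via the single identity $q(x,x)=e$. No transfinite recursion is needed, and associativity never enters.
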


\begin{proof}
Since $G$ is a bisequential space, there exists a countable open
prefilter $\gamma$ on $G$ converging to some point $g\in G$
\cite[Lemma 4.7.11]{A2008}. Since $G$ is homogeneous, without loss of
generality, we can assume that $g=e$. Then the family $\{q(B, B):
B\in\gamma\}$ is a base at $e$. Indeed, for any open neighborhood
$U$ of $e$, we can find an open neighborhood $V$ of $e$ and
$B\in\gamma$ such that $q(V, V)\subset U$ and $B\subset V$. So we
have $$e\in q(B, B)\subset q(V, V)\subset U.$$ Therefore, the space $G$
is first-countable at point $e$. Since $G$ is homogeneous, the space $G$ is
first-countable. Thus $G$ is metrizable by \cite[Theorem
3.2]{G1996}.
\end{proof}

\begin{lemma}\label{l2}
Suppose that $\{V_{n}(x): n\in \mathbb{N} ,x\in G\}$ is a weak base
in a rectifiable space. For each $x\in G$ and each $n\in
\mathbb{N}$, put $W_{n}(x)=x\cdot V_{n}(e)$. Then $\{W_{n}(x): n\in
\mathbb{N} ,x\in G\}$ is a weak base in $G$.
\end{lemma}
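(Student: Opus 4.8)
The plan is to verify, for the family $\mathscr{Q}_{x}=\{W_{n}(x):n\in\mathbb{N}\}$, the three requirements in the definition of a weak base: the filter-type condition (a), the network condition (b), and the global open-set criterion. Throughout I will use that, by Theorem~\ref{t9}, the left translation $f_{x}=p(x,\cdot)$ is a homeomorphism of $G$ with inverse $g_{x}=q(x,\cdot)$, that $f_{x}(e)=x\cdot e=x$, and hence $W_{n}(x)=f_{x}(V_{n}(e))$. Since $e\in V_{n}(e)$, we get $x=f_{x}(e)\in W_{n}(x)$, so each $\mathscr{Q}_{x}$ covers $x$.

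Conditions (a) and (b) I would obtain simply by transporting the corresponding properties of $\{V_{n}(e)\}$ at $e$ through the bijection $f_{x}$. For (a), if $W_{m}(x),W_{k}(x)\in\mathscr{Q}_{x}$ then $W_{m}(x)\cap W_{k}(x)=f_{x}(V_{m}(e)\cap V_{k}(e))$, and choosing $V_{j}(e)\subset V_{m}(e)\cap V_{k}(e)$ gives $W_{j}(x)\subset W_{m}(x)\cap W_{k}(x)$. For (b), if $x\in U$ with $U$ open, then $g_{x}(U)=q(x,U)$ is an open neighbourhood of $g_{x}(x)=e$, so by the network property of $\{V_{n}(e)\}$ there is $n$ with $V_{n}(e)\subset q(x,U)$, whence $W_{n}(x)=f_{x}(V_{n}(e))\subset f_{x}(q(x,U))=U$. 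In particular this settles the easy half of the criterion: every open set $A$ satisfies that for each $x\in A$ some $W_{n}(x)\subset A$.

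The substance of the lemma is the converse half of the criterion: if $A\subset G$ has the property that for each $x\in A$ there is $n$ with $W_{n}(x)\subset A$, then $A$ is open. Here I would use that a weakly first-countable space is sequential, so it suffices to prove that $G\setminus A$ is sequentially closed. Take a sequence $z_{m}\to z$ with all $z_{m}\in G\setminus A$ and suppose, for contradiction, that $z\in A$; fix $n$ with $z\cdot V_{n}(e)\subset A$. Put $w_{m}=q(z,z_{m})=g_{z}(z_{m})$. By joint continuity of $q$ we have $w_{m}\to q(z,z)=e$, while $z_{m}\notin A\supset z\cdot V_{n}(e)=f_{z}(V_{n}(e))$ forces $w_{m}=g_{z}(z_{m})\notin V_{n}(e)$ for every $m$. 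Thus everything comes down to a fact about the \emph{original} weak base, applied to the sequence $(w_{m})$ at the point $e$.

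The fact I need, and the step I expect to be the main obstacle, is: in a Hausdorff weakly first-countable space, a sequence converging to $e$ is eventually contained in each fixed weak-base element $V_{n}(e)$. This is exactly where one must be careful, because a weak-base element need not be open, so convergence does not hand it to us for free. I would argue by contradiction: if $w_{m}\to e$ but $w_{m}\notin V_{n}(e)$ for all $m$, then, since $w_{m}\to e$ while $e\notin\{w_{m}:m\in\mathbb{N}\}$, the set $G\setminus\{w_{m}:m\in\mathbb{N}\}$ fails to be open, so by the weak-base criterion for $\{V_{k}(\cdot)\}$ there is a point $y\notin\{w_{m}\}$ with $V_{k}(y)\cap\{w_{m}\}\neq\emptyset$ for every $k$. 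Using a decreasing version of $\{V_{k}(y)\}$ together with $\bigcap_{k}V_{k}(y)=\{y\}$ (valid since $G$ is Hausdorff), one extracts a subsequence of $(w_{m})$ converging to $y$; by uniqueness of limits $y=e$, and then $V_{n}(e)\cap\{w_{m}\}\neq\emptyset$ contradicts the choice of $n$. This contradiction shows $z\notin A$, so $G\setminus A$ is sequentially closed and $A$ is open, completing the verification that $\{W_{n}(x)\}$ is a weak base.
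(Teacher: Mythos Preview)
Your argument is correct. Conditions (a) and (b) are indeed immediate from the fact that $f_{x}=p(x,\cdot)$ is a bijection, and your verification of the open-set criterion via sequentiality is sound: weakly first-countable spaces are sequential, and the key sub-claim (a sequence converging to $e$ is eventually inside each $V_{n}(e)$) is proved correctly by your contradiction argument, since in a $T_{1}$ space $\bigcap_{k}V_{k}(y)=\{y\}$ and the decreasing network property lets you extract a subsequence converging to $y$, forcing $y=e$.

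The paper, however, does not carry out this direct verification. It simply observes that $f_{x}$ is a homeomorphism with $f_{x}(e)=x$ and then invokes \cite[Proposition~4.7.2]{A2008}, a general result to the effect that transporting the weak-base system at one point through a family of homeomorphisms $(f_{x})_{x\in G}$ with $f_{x}(e)=x$ again yields a weak base. So the paper's proof is a one-line citation, while you unpack the content of that citation by hand: you pass through the sequentiality of weakly first-countable spaces and check sequential closedness of complements explicitly. Your route is longer but fully self-contained and does not depend on locating the external proposition; the paper's route is terse but leans entirely on the reference.
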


\begin{proof}
For each $x\in G$, let $f_{x}: G\rightarrow G$ with $f_{x}(y)=p(x,
y)$ for each $y\in G$. Therefore, the map $f_{x}$ is a homeomorphism of $G$
onto $G$ such that $f_{x}(e)=p(x, e)=p(x, q(x, x))=x$. Put
$W_{n}(x)=f_{x}(V_{n}(e))=x\cdot V_{n}(e)$. It follows from
\cite[Proposition 4.7.2]{A2008} that $\{W_{n}(x): n\in \mathbb{N}
,x\in G\}$ is a weak base in $G$.
\end{proof}

\begin{lemma}\label{l3}
Suppose that $\{V_{n}(x): n\in \mathbb{N} ,x\in G\}$ is a weak base
in a rectifiable space. For each $x\in G$ and each $n\in
\mathbb{N}$, put $W_{n}(x)=(x\cdot V_{n}(e))\cdot V_{n}(e)$. Then
$\{W_{n}(x): n\in \mathbb{N} ,x\in G\}$ is a weak base in $G$.
\end{lemma}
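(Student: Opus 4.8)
The plan is to verify the four constituent conditions of the weak base definition directly for the family $\{W_{n}(x): n\in\mathbb{N}, x\in G\}$, using the joint continuity of the multiplication from Theorem~\ref{t9} and, for the one genuinely nontrivial axiom, reducing everything to Lemma~\ref{l2}. Throughout I would write $A\cdot B=p(A,B)$ and use the elementary set monotonicity $A\subseteq A'$, $B\subseteq B'\Rightarrow A\cdot B\subseteq A'\cdot B'$. The single observation driving the argument is the inclusion
\[
x\cdot V_{n}(e)\subseteq W_{n}(x)=(x\cdot V_{n}(e))\cdot V_{n}(e),
\]
which holds because $e\in V_{n}(e)$ (as $\mathscr{P}_{e}=\{V_{n}(e)\}$ is a network at $e$) and $a\cdot e=a$ for every $a$, so each $a\in x\cdot V_{n}(e)$ equals $a\cdot e\in(x\cdot V_{n}(e))\cdot V_{n}(e)$.

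First I would record that the family is a cover with $x\in W_{n}(x)$ for all $n$, since $x=x\cdot e\in x\cdot V_{n}(e)\subseteq W_{n}(x)$. Next, for the filter condition (a), given $m,n$ the filter property of $\mathscr{P}_{e}$ supplies $k$ with $V_{k}(e)\subseteq V_{m}(e)\cap V_{n}(e)$, and set monotonicity then gives $W_{k}(x)\subseteq W_{m}(x)\cap W_{n}(x)$ for every $x$. For the network condition (b), fix an open $U\ni x$; the map $r(a,b,c)=(a\cdot b)\cdot c=p(p(a,b),c)$ is continuous with $r(x,e,e)=x\in U$, so joint continuity produces an open neighbourhood $O$ of $e$ with $(x\cdot O)\cdot O\subseteq U$, and choosing $n$ with $V_{n}(e)\subseteq O$ (network property at $e$) yields $W_{n}(x)\subseteq(x\cdot O)\cdot O\subseteq U$. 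This shows that each $\{W_{n}(x):n\in\mathbb{N}\}$ is a network at $x$.

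It remains to verify the defining weak base axiom: if $A\subseteq G$ is such that for every $x\in A$ there is $n$ with $W_{n}(x)\subseteq A$, then $A$ is open. This is where the displayed inclusion pays off, for $W_{n}(x)\subseteq A$ forces $x\cdot V_{n}(e)\subseteq A$ with the same $n$. Thus $A$ satisfies the covering condition relative to the family $\{x\cdot V_{n}(e)\}$, which is a weak base by Lemma~\ref{l2}; hence $A$ is open. This completes the reduction.

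I expect the only step demanding genuine care to be the network condition (b), as it is the single place where the joint continuity of $p$ and the passage from an open neighbourhood $O$ of $e$ back to a weak base member $V_{n}(e)$ must be combined. The remaining conditions are essentially formal consequences of the inclusion $x\cdot V_{n}(e)\subseteq W_{n}(x)$ together with Lemma~\ref{l2}, so I anticipate no real obstacle there; the lemma is not deep, but the clean reduction of its hardest axiom to the already-established Lemma~\ref{l2} is what makes the whole verification go through smoothly.
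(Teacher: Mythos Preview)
Your proof is correct and follows essentially the same route as the paper: both rely on Lemma~\ref{l2} and the continuity of $p$, the paper merely compressing everything into the phrase ``since $p$ is a continuous map $\ldots$ it is easy to see''. You have simply supplied the verification that the paper omits, and your reduction of the openness axiom to Lemma~\ref{l2} via the inclusion $x\cdot V_{n}(e)\subseteq W_{n}(x)$ is exactly the mechanism the paper's terse argument requires.
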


\begin{proof}
By Lemma~\ref{l2}, we can assume that $V_{n}(x)=x\cdot V_{n}(e)$,
for each $x\in G$ and each $n\in \mathbb{N}$. Since $p$ is a
continuous map from $G\times G$ onto $G$, it is easy to see that
$\{W_{n}(x): n\in \mathbb{N} ,x\in G\}$ is a weak base in $G$.
\end{proof}

\begin{lemma}\cite{A2008}\label{l4}
Suppose that $\{V_{n}(x): n\in \mathbb{N} ,x\in G\}$ and
$\{W_{n}(x): n\in \mathbb{N} ,x\in G\}$ are two weak bases in a
space $G$. Then, for each $g\in G$ and every $n\in\mathbb{N}$, there
exists a $k\in \mathbb{N}$ such that $W_{k}(g)\subset V_{n}(g)$.
\end{lemma}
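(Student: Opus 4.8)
The plan is to argue by contradiction and to recast the statement as the assertion that the fixed weak-base member $V_n(g)$ is a \emph{sequential neighbourhood} of $g$, i.e.\ that every sequence converging to $g$ lies eventually in $V_n(g)$. So suppose no such $k$ exists; then $W_k(g)\not\subseteq V_n(g)$ for every $k$, and I may choose a point $x_k\in W_k(g)\setminus V_n(g)$. First I would replace $\{W_m(g):m\in\mathbb{N}\}$ by a decreasing cofinal chain, which is legitimate because $\mathscr{W}_g$ is a countable downward-directed network at $g$ by conditions (a) and (b) in the definition of a weak base; choosing $x_k$ from the members of this chain then forces $x_k\to g$, since by the network property any open $U\ni g$ contains a member of the chain and hence all $x_k$ from some index on. Because $g\in V_n(g)$ while $x_k\notin V_n(g)$, every $x_k$ differs from $g$; and were some value repeated infinitely often, the resulting constant subsequence would converge both to that value and to $g$, forcing equality with $g$ by Hausdorffness, a contradiction. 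Passing to a subsequence I may therefore assume the $x_k$ are pairwise distinct, converge to $g$, and avoid $V_n(g)$.

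The core of the argument is then to prove that $O=G\setminus\{x_k:k\in\mathbb{N}\}$ is open, by verifying the defining property of the weak base $\{V_m(x)\}$: each point of $O$ must contain some $V_m$-member inside $O$. At the point $g$ this is immediate and is exactly where the hypothesis is spent, for $x_k\notin V_n(g)$ for all $k$ means $V_n(g)\cap\{x_k\}=\emptyset$, so $V_n(g)\subseteq O$. At a point $z\in O$ with $z\neq g$, I would use that in a Hausdorff space the only cluster point of a sequence converging to $g$ is $g$ itself; hence $z$ has an open neighbourhood meeting $\{x_k\}$ in a finite set, and after deleting those finitely many points (closed since $G$ is $T_1$) I obtain an open neighbourhood $U'\ni z$ disjoint from $\{x_k\}$, whereupon the network property (b) yields $V_m(z)\subseteq U'\subseteq O$. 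Once $O$ is known to be open the contradiction is immediate: $g\in O$, $O$ is open, and $x_k\to g$ force $x_k\in O$ for all large $k$, whereas every $x_k$ lies in $G\setminus O$ by construction.

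I expect the main obstacle to be the verification that $O$ is weak-open at the points $z\neq g$, since weak-base members are \emph{not} assumed to be open and so one cannot simply pass to interiors; the separation of $z$ from the convergent sequence must be accomplished with genuinely open sets before the network property is invoked, and this is precisely the point at which Hausdorffness and the $T_1$ axiom are needed. A secondary delicacy is the preliminary reduction to a decreasing cofinal chain in $\mathscr{W}_g$ together with the extraction of a subsequence of distinct terms, which is what simultaneously guarantees $x_k\to g$ and ensures that $g$ is the unique cluster point of $\{x_k\}$. It is worth noting the asymmetry in how the two hypotheses are used: the weak-base structure of $\{W_m(x)\}$ serves only to manufacture the convergent sequence escaping $V_n(g)$, while the weak-base structure of $\{V_m(x)\}$ does the essential work of forcing $O$ to be open.
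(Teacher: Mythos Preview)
The paper does not supply its own proof of this lemma; it is quoted from \cite{A2008} and stated without argument. There is therefore nothing to compare against, and I assess your proposal on its merits.

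Your argument is correct and is in fact the standard one: each member $V_n(g)$ of a weak base is a sequential neighbourhood of $g$, and any countable network $\{W_k(g)\}$ at $g$ (after refining to a decreasing chain) manufactures a sequence converging to $g$ from any system of sets meeting every $W_k(g)$. The verification that $O=G\setminus\{x_k:k\in\mathbb{N}\}$ satisfies the weak-base openness criterion is the heart of the matter, and you handle both cases properly. At $g$ the hypothesis $x_k\notin V_n(g)$ gives $V_n(g)\subset O$ directly; at $z\neq g$ you correctly separate $z$ from the tail of the sequence with a genuine open set before invoking the network property for $\{V_m(z)\}$. The paper's standing hypothesis that all spaces are $T_1$ and regular supplies the Hausdorffness you use for uniqueness of limits and the $T_1$ axiom you use to delete finitely many points.

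Two minor remarks. First, the passage to pairwise distinct $x_k$ is harmless but not strictly needed: the set $\{x_k:k\in\mathbb{N}\}$ is the same whether or not repetitions occur, and the argument at $z\neq g$ only uses that at most finitely many terms lie in a neighbourhood of $z$. Second, your diagnosis of the asymmetry is exactly right: from $\{W_m\}$ one only needs the network property at $g$, while from $\{V_m\}$ one needs the full weak-base openness criterion.
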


\begin{theorem}\label{t8}
Every weakly first-countable rectifiable space $G$ is metrizable.
\end{theorem}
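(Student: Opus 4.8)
The plan is to reduce the statement to first-countability and then invoke the metrization theorem for first-countable rectifiable spaces, exactly as in the bisequential case treated above. Since $G$ is homogeneous, it suffices to produce a countable base of open neighbourhoods at the right neutral element $e$. Starting from a weak base $\{V_n(x):n\in\mathbb{N},x\in G\}$, Lemma~\ref{l2} lets me pass to the left-translated weak base $A_n(x):=x\cdot V_n(e)$, and Lemma~\ref{l3} shows that the ``doubled'' family $B_n(x):=(x\cdot V_n(e))\cdot V_n(e)$ is again a weak base. These two lemmas are the rectifiable-space substitutes for the group-theoretic facts that left translations are homeomorphisms and that one may square a symmetric neighbourhood.

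First I would, for each $n$, introduce the set
$$H_n=\{x\in G:\ x\cdot V_m(e)\subset V_n(e)\ \text{for some}\ m\in\mathbb{N}\}$$
and verify the two easy inclusions $e\in H_n$ and $H_n\subset V_n(e)$. The first follows from Lemma~\ref{l4} applied to the weak bases $\{A_n\}$ and $\{V_n\}$ at the point $e$, which supplies an $m$ with $e\cdot V_m(e)\subset V_n(e)$. The second uses only that $e\in V_m(e)$ (each $V_m(e)$ is a network element at $e$) together with the identity $x=p(x,e)$ from Theorem~\ref{t9}, so that $x\in x\cdot V_m(e)\subset V_n(e)$ whenever $x\in H_n$.

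The heart of the argument, and the step I expect to be the main obstacle, is to prove that each $H_n$ is open. For this I would take $x\in H_n$ with witness $m$, i.e. $x\cdot V_m(e)\subset V_n(e)$, and apply Lemma~\ref{l4} to the weak bases $\{B_n\}$ and $\{A_n\}$ at the point $x$ and the index $m$; this produces an $\ell$ with $(x\cdot V_\ell(e))\cdot V_\ell(e)\subset x\cdot V_m(e)\subset V_n(e)$. The decisive observation is then that for every $z\in x\cdot V_\ell(e)$ one has $z\cdot V_\ell(e)\subset (x\cdot V_\ell(e))\cdot V_\ell(e)\subset V_n(e)$, so $z\in H_n$; hence the weak-base element $x\cdot V_\ell(e)=A_\ell(x)$ is contained in $H_n$. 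As this holds for every $x\in H_n$, the defining property of a weak base forces $H_n$ to be open. This is precisely where the non-associativity and one-sidedness of the rectification make a direct ``$V^2\subset U$'' computation unavailable, and why the comparison Lemma~\ref{l4} together with the doubled weak base of Lemma~\ref{l3} must replace a naive algebraic estimate.

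Finally I would conclude. Because $\{V_n(e)\}$ is in particular a network at $e$, every open set $U$ containing $e$ contains some $V_n(e)\supset H_n$, so $\{H_n:n\in\mathbb{N}\}$ is a countable base of open neighbourhoods at $e$. Thus $G$ is first-countable at $e$, and by homogeneity $G$ is first-countable everywhere; \cite[Theorem 3.2]{G1996} then yields that $G$ is metrizable.
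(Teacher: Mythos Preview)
Your proposal is correct and follows essentially the same route as the paper: the paper defines $U_n=\{x\in V_n(e):x\cdot V_k(e)\subset V_n(e)\text{ for some }k\}$, which coincides with your $H_n$ since the extra condition $x\in V_n(e)$ is automatic, and then proves openness by exactly your argument---using Lemma~\ref{l4} to compare the doubled weak base of Lemma~\ref{l3} with the translated one of Lemma~\ref{l2} so as to find an $\ell$ with $(x\cdot V_\ell(e))\cdot V_\ell(e)\subset x\cdot V_m(e)\subset V_n(e)$, whence $x\cdot V_\ell(e)\subset H_n$. The conclusion via first-countability at $e$ and \cite[Theorem~3.2]{G1996} is also identical.
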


\begin{proof}
Let $\{V_{n}(x): n\in \mathbb{N} ,x\in G\}$ be a weak base in $G$.
For each $x\in G$ and $n\in \mathbb{N}$, we have
$V_{n}(x)=x\cdot V_{n}(e)$ by Lemma~\ref{l2}. Let $U_{n}=\{x\in
V_{n}(e): x\cdot V_{k}(e)\subset V_{n}(e)\ \mbox{for some}\
k\in\mathbb{N}\}$. Obviously, we have $e\in U_{n}\subset V_{n}(e)$ by
Lemma~\ref{l2} and~\ref{l4}. Next we show that $U_{n}$ is open in
$G$. Indeed, take any $y\in U_{n}$. Then $y\cdot V_{k}(e)\subset
V_{n}(e)$ for some $k\in\mathbb{N}$. By Lemma~\ref{l2},~\ref{l3}
and~\ref{l4}, it is easy to see that there exists an $m\in
\mathbb{N}$ such that $(y\cdot V_{m}(e))\cdot V_{m}(e)\subset y\cdot
V_{k}(e)$. Hence $(y\cdot V_{m}(e))\cdot V_{m}(e)\subset V_{n}(e)$,
which implies that $V_{m}(y)=y\cdot V_{m}(e)\subset U_{n}$.
Therefore, the set $U_{n}$ is open in $G$. Thus $\{U_{m}: m\in \mathbb{N}\}$
is a countable base of $G$ at $e$. Then $G$ is metrizable by
\cite[Theorem 3.2]{G1996}.
\end{proof}

A space $X$ is said to be {\it Fr$\acute{e}$chet-Urysohn} if, for
each $x\in \overline{A}\subset X$, there exists a sequence
$\{x_{n}\}$ such that $\{x_{n}\}$ converges to $x$ and $\{x_{n}:
n\in\mathbb{N}\}\subset A$. A space $X$ is said to be {\it strongly
Fr$\acute{e}$chet-Urysohn} if the following condition is satisfied

(SFU) For every $x\in X$ and each sequence $\eta =\{A_{n}: n\in
\mathbb{N}\}$ of subsets of $X$ such that $x\in\bigcap_{n\in
\mathbb{N}}\overline{A_{n}}$, there is a sequence $\zeta =\{a_{n}:
n\in \mathbb{N\}}$ in $X$ converging to $x$ and intersecting
infinitely many members of $\eta$.

Obviously, a strongly Fr$\acute{e}$chet-Urysohn space is
Fr$\acute{e}$chet-Urysohn. However, the space $S_{\omega}$ is
Fr$\acute{e}$chet-Urysohn and non-strongly
Fr$\acute{e}$chet-Urysohn.

Next, we show that the properties of Fr$\acute{e}$chet-Urysohn and
strongly Fr$\acute{e}$chet-Urysohn coincide in rectifiable
spaces.

\begin{theorem}\label{t4}
If a rectifiable space $G$ is Fr$\acute{e}$chet-Urysohn, then it is
strongly Fr$\acute{e}$chet-Urysohn.
\end{theorem}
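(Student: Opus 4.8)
The plan is to localize the statement at the right neutral element $e$ using homogeneity, to recast ``strongly Fr\'echet--Urysohn'' (in the presence of the Fr\'echet--Urysohn property) as a weak diagonalization property for sequences, and then to manufacture the required convergent ``transversal'' by feeding a carefully assembled set into the Fr\'echet--Urysohn property and controlling it with the multiplication $p$ furnished by Theorem~\ref{t9}. Concretely: since $G$ is homogeneous, it is enough to verify condition (SFU) at $e$. So let $\eta=\{A_n:n\in\mathbb N\}$ satisfy $e\in\bigcap_n\overline{A_n}$. Because $G$ is Fr\'echet--Urysohn, for each $n$ I may fix a sequence $S_n=\{x_{n,k}:k\in\mathbb N\}\subset A_n$ with $x_{n,k}\to e$ as $k\to\infty$. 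Any sequence converging to $e$ that meets infinitely many of the $S_n$ then meets infinitely many $A_n$, so proving (SFU) reduces to the following diagonalization (property $\alpha_4$ at $e$, in Arhangel'ski\u\i's terminology): from the countable family $\{S_n\}$ of sequences converging to $e$ one can extract a single sequence converging to $e$ that meets infinitely many of them. This is the only thing left to establish.

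For the construction I would first try the naive route. Put $B=\bigcup_n S_n$; then $e\in\overline{B}$, so the Fr\'echet--Urysohn property yields a sequence $(w_j)\subset B$ with $w_j\to e$. If $(w_j)$ meets infinitely many $S_n$ we are finished after passing to a subsequence whose $n$-indices strictly increase. The difficulty is the opposite, \emph{trapped}, case, where $(w_j)$ lies, up to a subsequence, inside a single $S_{n_0}$. This is exactly the sequential-fan ($S_\omega$) phenomenon noted just before the theorem, and it shows that the Fr\'echet--Urysohn property alone cannot force a transversal: the algebraic structure must intervene. Here I would invoke Theorem~\ref{t9}, namely that $p$ is continuous with $p(e,e)=e$ and that for each fixed $x$ the maps $p(x,\cdot)$ and $q(x,\cdot)$ are homeomorphisms fixing the relevant base points. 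The idea is to use these homeomorphisms to realign the $S_n$ into a common configuration at $e$ and to apply the Fr\'echet--Urysohn property to the realigned family, with continuity of $p$ at $(e,e)$ guaranteeing that realignment preserves convergence to $e$, so that any limiting sequence is forced to draw points from infinitely many of the original $S_n$.

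The step I expect to be the main obstacle is precisely this last one. The sequence witnessing (SFU) must have its points \emph{inside} the prescribed sets $A_n$, so one cannot simply multiply sample points of different $S_n$ together and hope to land on a transversal: the product of two sample points need not be a sample point. The crux is therefore to organize the \emph{choice} of which $x_{n,k}$ to take --- rather than to combine them --- while using the homeomorphisms $p(x,\cdot),\,q(x,\cdot)$ and the continuity of $p$ to certify that the chosen transversal converges to $e$. This is where the rectifiable structure is used in an essential way, and where the most care is needed, since a general Fr\'echet--Urysohn space (for instance $S_\omega$ itself) fails at exactly this point; by homogeneity, success at $e$ then yields (SFU) at every point of $G$.
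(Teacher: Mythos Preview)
Your reduction to checking property $\alpha_4$ at $e$ is correct and matches the paper's setup, and you have correctly located where the difficulty lies. But your proposal stops precisely at the crux: you do not supply the construction that breaks the ``trapped'' case, and your sketch of ``realigning the $S_n$ into a common configuration at $e$'' points in the wrong direction. The issue is that as long as each $A_n$ (or each $S_n$) has $e$ in its closure, nothing prevents a Fr\'echet--Urysohn witness from living in a single $A_{n_0}$; you need to \emph{remove} $e$ from the closure of each piece while keeping $e$ in the closure of the union.

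The paper does this by shifting each $A_n$ \emph{away} from $e$. Fix a sequence $a_n\to e$ with $a_n\neq e$; using continuity of $p$ and $q$ at $(e,e)$ and the identity $p(e,e)=e$, choose neighborhoods $V_n\ni e$ with $a_n\notin p(V_n,V_n)$, then $W_n\ni e$ with $W_n\subset V_n$ and $q(a_n\cdot W_n,a_n)\subset V_n$, and replace $A_n$ by $A_n\cap W_n$. Put $C_n=a_n\cdot A_n$. One checks $e\notin\overline{C_n}$ (otherwise $a_n\in p(W_n,V_n)\subset p(V_n,V_n)$), while $a_n\in\overline{C_n}$ since $p(a_n,\cdot)$ is a homeomorphism; hence $e\in\overline{\bigcup_n C_n}$. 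Now the Fr\'echet--Urysohn property yields $(c_j)\subset\bigcup_n C_n$ with $c_j\to e$, and because $e\notin\overline{C_n}$ this sequence \emph{cannot} be trapped in any single $C_n$. Writing $c_j=a_{k_j}\cdot a'_{k_j}$ with $a'_{k_j}\in A_{k_j}$, one recovers $b_j=a'_{k_j}=q(a_{k_j},c_j)\to q(e,e)=e$, which meets infinitely many $A_n$. This shift-and-pull-back trick is the missing idea in your proposal; without it the argument does not close.
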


\begin{proof}
We can assume that $G$ is non-discrete. It is enough to verify the
condition (SFU) for $x=e$ since $G$ is homogeneous. Suppose
$e\in\bigcap_{n\in\mathbb{N}}\overline{A_{n}}$, where each $A_{n}$
is a subset of $G$. Fix a sequence $\{a_{n}: n\in\mathbb{N}\}\subset
G\setminus \{e\}$ converging to $e$. For each $n\in \mathbb{N}$,
since $p(e, e)=e$, we can fix an open neighborhood $V_{n}$ of $e$
such that $a_{n}\not\in p(V_{n}, V_{n})$. Since $q(a_{n}, a_{n})=e$,
we can fix an open neighborhood $W_{n}$ of $e$ such that
$q(a_{n}\cdot W_{n}, a_{n})\subset V_{n}$ and $W_{n}\subset V_{n}$.
Moreover, for each $n\in\mathbb{N}$, since $e\in\overline{A_{n}}$,
we may assume that $A_{n}\subset W_{n}$. Put $C_{n}=a_{n}\cdot
A_{n}$, for each $n\in\mathbb{N}$.

Claim 1: We have $e\not\in\overline{C_{n}}$ and $a_{n}\in\overline{C_{n}}$
for each $n\in\mathbb{N}$.

Indeed, if $e\in\overline{C_{n}}$, then $W_{n}\cap a_{n}\cdot
A_{n}\neq\emptyset$. Then we can choose $w_{n}\in W_{n}$ and
$w^{\prime}_{n}\in A_{n}$ such that $w_{n}=a_{n}\cdot
w^{\prime}_{n}$. Therefore, we have $a_{n}=(a_{n}\cdot w^{\prime}_{n})\cdot
q(a_{n}\cdot w^{\prime}_{n}, a_{n})=p(a_{n}\cdot w^{\prime}_{n},
q(a_{n}\cdot w^{\prime}_{n}, a_{n}))\subset p(W_{n}, V_{n})\subset
p(V_{n}, V_{n})$, which is a contradiction. Hence
$e\not\in\overline{C_{n}}$ for each $n\in\mathbb{N}$. For each $g\in
G$ and $A\subset G$, since $p(g, \overline{A})=\overline{p(g, A)}$,
it is easy to see that $a_{n}\in\overline{C_{n}}$ for each
$n\in\mathbb{N}$.

Let $C=\bigcup_{n\in\mathbb{N}}$, and hence $e\in\overline{C}$.
Since $G$ is Fr$\acute{e}$chet-Urysohn, there exists a sequence
$\zeta =\{c_{n}: n\in\mathbb{N}\}$ in $C$ converging to $e$. By the
Claim 1, we have $e\not\in\overline{C_{n}}$, and hence the sequence
$\zeta$ must intersects $C_{n}$ for infinitely many values of $n$.
For every $n\in\mathbb{N}$, choose a $k_{n}\in\mathbb{N}$ such that
$c_{n}\in C_{k_{n}}$. Then $c_{n}=a_{k_{n}}\cdot a_{k_{n}}^{\prime}$
for each $n\in\mathbb{N}$, where $a_{k_{n}}^{\prime}\in A_{k_{n}}$.
Put
$$b_{n}=a_{k_{n}}^{\prime}=q(a_{k_{n}}, p(a_{k_{n}},
a_{k_{n}}^{\prime}))=q(a_{k_{n}}, a_{k_{n}}\cdot
a_{k_{n}}^{\prime})=q(a_{k_{n}}, c_{n})\rightarrow e\ \mbox{as}\
n\rightarrow\infty.$$ Therefore, the sequence $\{b_{n}:
n\in\mathbb{N}\}$ converges to $e$ and intersects infinitely many
$A_{n}$'s. Thus, the condition (SFU) is satisfied, and hence $G$ is
strongly Fr$\acute{e}$chet-Urysohn.
\end{proof}

\begin{theorem}
The product of a Fr$\acute{e}$chet-Urysohn rectifiable space $G$
with a first-countable space $M$ is Fr$\acute{e}$chet-Urysohn.
\end{theorem}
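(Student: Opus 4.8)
The plan is to reduce the Fr\'echet--Urysohn property of $G\times M$ to the \emph{strong} Fr\'echet--Urysohn property of $G$, which is available by Theorem~\ref{t4}. Let $W\subseteq G\times M$ and fix $(g,m)\in\overline{W}$; I must produce a sequence in $W$ converging to $(g,m)$. Since $M$ is first-countable, fix a decreasing neighbourhood base $\{O_{n}:n\in\mathbb{N}\}$ of $m$ in $M$, so that $O_{1}\supseteq O_{2}\supseteq\cdots$ and every neighbourhood of $m$ contains some $O_{n}$.

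First I would transfer the problem into $G$ alone. For each $n$ set $A_{n}=\pi_{G}\big(W\cap(G\times O_{n})\big)$, the projection to the first coordinate. Because the $O_{n}$ decrease, the family $\{A_{n}\}$ is decreasing. Moreover $g\in\bigcap_{n}\overline{A_{n}}$: given any open $U\ni g$ in $G$, the set $U\times O_{n}$ is a neighbourhood of $(g,m)$, hence meets $W$ at some point $(x,y)$, and then $x\in U\cap A_{n}$. Thus $g$ accumulates to every $A_{n}$, and the nested family $\eta=\{A_{n}\}$ is exactly the kind of data to which condition (SFU) applies.

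Next I would invoke Theorem~\ref{t4}: since $G$ is Fr\'echet--Urysohn it is strongly Fr\'echet--Urysohn, so (SFU) applies to the point $g$ and the family $\eta$. Using that $\{A_{n}\}$ is decreasing, I would extract from (SFU) a genuine selection, namely points $x_{n}\in A_{n}$ with $x_{n}\to g$. The raw output of (SFU) is only a sequence converging to $g$ and meeting infinitely many $A_{n}$; because the $A_{n}$ are nested, for each $n$ one may choose a term lying in $A_{n}$ whose index in the original sequence tends to infinity, and a diagonal choice then yields $x_{n}\in A_{n}$ still converging to $g$. The hard part will be precisely this reindexing, and it is the only step where the decreasing structure of the $A_{n}$ is genuinely used: without nesting one could be handed a convergent sequence that clusters its ``deep'' terms at finitely many indices, and no selection $x_{n}\in A_{n}$ would follow.

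Finally I would lift the selection back to the product. For each $n$, since $x_{n}\in A_{n}=\pi_{G}(W\cap(G\times O_{n}))$, choose $y_{n}\in O_{n}$ with $(x_{n},y_{n})\in W$. As $y_{n}\in O_{n}$ and $\{O_{n}\}$ is a decreasing base at $m$, we have $y_{n}\to m$; together with $x_{n}\to g$ this gives $(x_{n},y_{n})\to(g,m)$ in $G\times M$, with every term in $W$. Hence $(g,m)$ is the limit of a sequence from $W$, so $G\times M$ is Fr\'echet--Urysohn. Everything flanking the selection step is routine, relying only on continuity of the projections and on the neighbourhood base of $M$ at $m$.
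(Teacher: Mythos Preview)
Your proposal is correct and follows essentially the same approach as the paper: project the set into $G$ along a decreasing neighbourhood base of $m$, apply Theorem~\ref{t4} to upgrade Fr\'echet--Urysohn to strongly Fr\'echet--Urysohn, and lift the resulting convergent sequence back to $G\times M$. The only cosmetic difference is that you work a bit harder to obtain a full selection $x_{n}\in A_{n}$ for every $n$, whereas the paper simply passes to a subsequence with $b_{k}\in B_{n_{k}}$ and $n_{k}>k$; since $\{O_{n}\}$ is decreasing this already forces $c_{k}\to m$, so your extra reindexing, while correct, is not needed.
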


\begin{proof}
Take any subset $A$ of $G\times M$ and any point $(x, y)\in
\overline{A}$. Fix a decreasing countable base $\{U_{n}:
n\in\mathbb{N}\}$ of $M$ at the point $y$. Put
$B_{n}=\pi_{1}((G\times U_{n})\cap A)$. Clearly, we have
$x\in\overline{B_{n}}$ for each $n\in\mathbb{N}$. We also have
$B_{n+1}\subset B_{n}$, since $U_{n+1}\subset U_{n}$. It follows
from Theorem~\ref{t4} that there exists a sequence $\{b_{n}:
n\in\mathbb{N}\}\subset G$ converging to $x$ and intersecting
$B_{n}$ for infinitely many $n\in\mathbb{N}$. For each
$k\in\mathbb{N}$, there are $b_{k}\in B_{n_{k}}$ and $c_{k}\in
U_{n_{k}}$ such that $(b_{k}, c_{k})\in A$ and $n_{k}> k$. Then,
clearly, the sequence $\{(b_{k}, c_{k}): k\in\mathbb{N}\}$ converges
to the point $(x, y)$.
\end{proof}

\begin{theorem}
The following conditions are equivalent for a rectifiable space $G$:
\begin{enumerate}
\item every compact subspace of $G$ is first-countable;

\item every compact subspace of $G$ is metrizable.
\end{enumerate}
\end{theorem}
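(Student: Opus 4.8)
The plan is to prove the nontrivial implication $(1)\Rightarrow(2)$, since $(2)\Rightarrow(1)$ is immediate (every metrizable space is first-countable). The guiding idea is to reduce metrizability of a compact subspace to the existence of a $G_{\delta}$-diagonal, and then to exploit the operation $q$ furnished by Theorem~\ref{t9} in order to realize the diagonal as the preimage of the single point $e$.

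First I would record the key algebraic fact that $q^{-1}(e)=\Delta_{G}$, where $\Delta_{G}=\{(x,x):x\in G\}$. Indeed, the identity $q(x,x)=e$ gives the inclusion $\Delta_{G}\subseteq q^{-1}(e)$; conversely, if $q(x,y)=e$ then, using the identities of Theorem~\ref{t9}, $y=p(x,q(x,y))=p(x,e)=p(x,q(x,x))=x$. Thus the continuous map $q:G\times G\rightarrow G$ sends precisely the diagonal onto $e$, and the diagonal of any subspace is cut out as $q^{-1}(e)$ intersected with that subspace squared.

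Now let $K$ be a nonempty compact subspace of $G$ (the empty set and singletons being trivially metrizable). I would set $L=q(K\times K)$. Since $q$ is continuous, $L$ is a compact subspace of $G$, and $e=q(x,x)\in L$ for any $x\in K$. By hypothesis $(1)$, the compact space $L$ is first-countable, so in the $T_{1}$ space $L$ the point $e$ is a $G_{\delta}$-set: there are open subsets $O_{n}$ of $L$ with $\bigcap_{n}O_{n}=\{e\}$. Restricting $q$ to $K\times K$ gives a continuous map $q|_{K\times K}:K\times K\rightarrow L$, whence the sets $(q|_{K\times K})^{-1}(O_{n})$ are open in $K\times K$ and satisfy $\bigcap_{n}(q|_{K\times K})^{-1}(O_{n})=(q|_{K\times K})^{-1}(e)=q^{-1}(e)\cap(K\times K)=\Delta_{K}$. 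Hence $\Delta_{K}$ is a $G_{\delta}$-set in $K\times K$, i.e. $K$ has a $G_{\delta}$-diagonal. Since $K$ is compact Hausdorff (the ambient space being $T_{1}$ and regular) with a $G_{\delta}$-diagonal, the classical theorem of \v{S}neider that every compact Hausdorff space with a $G_{\delta}$-diagonal is metrizable yields that $K$ is metrizable.

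The step to get right is the computation $q^{-1}(e)=\Delta_{G}$, together with the observation that first-countability is invoked not on $K$ itself but on the compact image $L=q(K\times K)$. This distinction is essential: first-countability of a compact space does not by itself imply metrizability (the double arrow space is a compact first-countable non-metrizable example), so the rectifiable structure, acting through $q$ and the distinguished point $e$, is genuinely what manufactures the $G_{\delta}$-diagonal. I do not anticipate a serious obstacle beyond verifying these two points carefully.
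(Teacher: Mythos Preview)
Your proof is correct and follows essentially the same route as the paper: both form the compact image $q(K\times K)$, invoke the hypothesis to get first-countability at $e$, and pull back along $q$ to show the diagonal of $K$ is small, whence $K$ is metrizable. The only cosmetic difference is that the paper exploits closedness of $q|_{K\times K}$ to obtain countable \emph{character} of $\Delta_{K}$ in $K\times K$ (citing \cite{Gr1984}), whereas you extract only a $G_{\delta}$-diagonal and invoke \v{S}neider's theorem; for compact Hausdorff spaces these are equivalent, so the arguments coincide.
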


\begin{proof}
Obviously, we have $(2)\Rightarrow (1)$.

$(1)\Rightarrow (2)$. Let $F$ be a non-empty compact subset of $G$.
Consider the map $g: G\times G\rightarrow G$ defined by $g(x,
y)=q(x, y)$, for all $x, y\in G$. Clearly, the map $g$ is continuous, and
the image $F_{1}=g(F\times F)$ is a compact subset of $G$ which
contains $e$ of $G$. Since every compact subspace of $G$ is
first-countable, the compact subspace $F_{1}$ is first-countable, thus $\chi
(e, F_{1})\leq \mathbb{N}$. Denote by $f$ the restriction of $g$ to
$F\times F$. For each $x\in G$, since $q(x, G)$ are homeomorphism
and $q(x, x)=e$, it is easy to see that $\Delta_{F}=f^{-1}(e)$,
where $\Delta_{F}$ is the diagonal in $F\times F$. Since $f$ is a
closed map, it follows that $\chi (\Delta_{F}, F\times F)=\chi (e,
F_{1})\leq \mathbb{N}$. Therefore, the subspace $F$ is metrizable by
\cite{Gr1984}.
\end{proof}

\bigskip

\section{Generalized metrizable properties on rectifiable spaces}
In this section, we discuss the generalized metrizable properties on
rectifiable spaces.

\begin{theorem}\label{t7}
Let $G$ be a rectifiable space. Then $G$ contains a (closed) copy of
$S_{\omega}$ if and only if $G$ has a (closed) copy of $S_{2}$.
\end{theorem}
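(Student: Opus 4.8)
The plan is to prove both the ``copy'' and the ``closed copy'' versions simultaneously, establishing each implication by a direct construction built from the rectifiable operations $p$ and $q$ of Theorem~\ref{t9}. Since $G$ is homogeneous, a homeomorphism of $G$ may be composed with any embedding, so I would assume throughout that the apex of the copy of $S_\omega$ (respectively the point $\infty$ of the copy of $S_2$) is the neutral element $e$. The identities I would lean on are $p(x,q(x,y))=q(x,p(x,y))=y$, $q(x,x)=e$ and $p(x,e)=x$, together with the joint continuity of $p$ and $q$ and the equalities $p(e,e)=e$, $q(e,e)=e$. The two topological facts that drive everything are the ``dodging'' descriptions of the fans: in $S_\omega$ (apex $e$, spines $\{a_k(m):m\in\mathbb N\}$) one has $e\in\overline B$ for $B$ avoiding $e$ iff some spine meets $B$ in an infinite set, so every transversal (one point from each of infinitely many spines) is closed discrete; dually, in $S_2$ a set $E$ of isolated points has $\infty\in\overline E$ iff $E$ meets infinitely many second-level spines each in an infinite set, so again transversals of the second level are closed discrete. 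The whole proof consists of transporting these two facts across $p$ and $q$.

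For the implication from $S_2$ to $S_\omega$, I would put $y_n(m)=q(x_n,x_n(m))$. Since $x_n(m)\to x_n$ as $m\to\infty$ and $q(x_n,x_n)=e$, each spine $\{y_n(m):m\in\mathbb N\}$ converges to $e$, so $Z=\{e\}\cup\{y_n(m):n,m\in\mathbb N\}$ is a fan-shaped candidate, and the inverse identity gives back $x_n(m)=p(x_n,y_n(m))$. To see $Z\cong S_\omega$ it suffices to check that if $B\subseteq Z\setminus\{e\}$ meets each spine finitely then $e\notin\overline B$: any accumulation of $B$ at $e$ cannot be confined to finitely many spines (there $B$ is finite), so it forces the first index to run to infinity; but then $x_n\to e$ and joint continuity turn $x_n(m)=p(x_n,y_n(m))$ into an accumulation of a second-level transversal of $S_2$ at $\infty=e$, which is impossible. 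For the closed version I would further note that a net in $Z\setminus\{e\}$ leaving every finite union of spines projects, through $x_n(m)=p(x_n,y_n(m))$ and $x_n\to e$, to a transversal of $S_2$; closedness of the original copy makes that transversal closed discrete in $G$, so $Z$ acquires no limit point outside itself.

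For the reverse implication the essential point is to avoid the collapse ``null $\cdot$ null $=$ null'': a second-level spine built inside a single spine of $S_\omega$ would accumulate at $e$ and destroy the Arens pattern. I would therefore use \emph{distinct} spines, setting the first level $x_n=a_1(n)$ (so $x_n\to e$) and the second level $x_n(m)=p(a_1(n),a_{n+1}(m))$, which tends to $p(a_1(n),e)=a_1(n)=x_n$ as $m\to\infty$. The identity $q(a_1(n),x_n(m))=a_{n+1}(m)$ is what saves the construction: a would-be sequence of second-level points converging to $\infty=e$ would have its first index tending to infinity, whence $a_1(n)\to e$ and, by joint continuity, $a_{n+1}(m)=q(a_1(n),x_n(m))\to e$; but $\{a_{n+1}(m)\}$ is a transversal of $S_\omega$ and hence closed discrete, a contradiction. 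The same computation yields the full Arens description of the closure at $\infty$ (namely $\infty\in\overline E$ iff infinitely many second-level spines meet $E$ infinitely), and the closed version follows once more from the closed-discreteness of $S_\omega$-transversals in $G$.

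The main obstacle, in both directions, is exactly this verification that the constructed family carries \emph{precisely} the fan, respectively Arens, topology rather than a strictly finer one produced by some diagonal sequence collapsing onto the apex. Everything else is routine: passing to sparse subsequences makes the chosen points pairwise distinct and isolated in the respective subspace, and the two cancellation identities together with joint continuity at $(e,e)$ are exactly what is needed to push the closed-discreteness of transversals back and forth. It is worth recording the underlying duality that makes the theorem clean: the map $q$ merges the two levels of an Arens space into a single fan, while $p$ splits one spine of a fan into the levels of an Arens space, and in each case the distinctness of the spines used is what prevents the apex from absorbing a diagonal.
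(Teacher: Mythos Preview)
Your approach is essentially identical to the paper's: both directions use the same constructions ($y_n(m)=q(x_n,x_n(m))$ to pass from $S_2$ to $S_\omega$, and $x_n(m)=p(x_n,y_n(m))$ for the converse), and the verifications reduce, exactly as you describe, to the closed-discreteness of transversals, which the paper establishes via explicit neighborhood computations with $p$ and $q$ rather than your net-based language. Your remark about thinning to obtain pairwise distinct spines corresponds to the paper's Claim~2, and your specific choice $x_n=a_1(n)$ in the second direction is a harmless variant of the paper's arbitrary non-trivial sequence $x_n\to e$.
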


\begin{proof}
Sufficiency. Since $G$ is homogeneous, without loss of generality, we
can assume that $A=\{e\}\cup\{x_{n}: n\in
\mathbb{N}\}\cup\{x_{n}(m): m, n\in\mathbb{N}\}$ is a closed copy of
$S_{2}$, where, $x_{n}\rightarrow e$ as $n\rightarrow \infty$ and
$x_{n}(m)\rightarrow x_{n}$ as $m\rightarrow\infty$ for each $n\in
\mathbb{N}$. For each $m, n\in \mathbb{N}$, put $y_{n}(m)=q(x_{n},
x_{n}(m))$. Then, for each $n\in \mathbb{N}$, we have $y_{n}(m)=q(x_{n},
x_{n}(m))\rightarrow q(x_{n}, x_{n})=e$ as $m\rightarrow \infty$.
For every $n\in \mathbb{N}$, let $A_{n}=\{y_{n}(m): m\in
\mathbb{N}\}$.

Claim 2: For each $m\in \mathbb{N}$, the set $F=\{n: A_{m}\cap A_{n}\
\mbox{is infinite}\}$ is finite.

Indeed, if not, then there exists an $m\in \mathbb{N}$ such that $F=\{n:
A_{m}\cap A_{n}\ \mbox{is infinite}\}$ is infinite. Take distinct
$q(x_{n_{i}}, x_{n_{i}}(m_{i}))\in A_{m}\cap A_{n_{i}}$ for $n_{i}$
with $n_{i}< n_{i+1}$. Then $q(x_{n_{i}},
x_{n_{i}}(m_{i}))\rightarrow e$ as $i\rightarrow\infty$ since
$q(x_{n_{i}}, x_{n_{i}}(m_{i}))\in A_{m}$ for each $i\in
\mathbb{N}$. Since $x_{n_{i}}(m_{i})=p(x_{n_{i}}, q(x_{n_{i}},
x_{n_{i}}(m_{i})))\rightarrow q(e, e)=e$ as $i\rightarrow\infty$,
$x_{n_{i}}(m_{i})\rightarrow e$ as $i\rightarrow\infty$, which is a
contradiction.

By the Claim 2, without loss of generality, we can assume that
$A_{i}\cap A_{j}=\emptyset$ for distinct $i, j\in \mathbb{N}$. Put
$B=\{e\}\cup\{y_{n}(m): n, m\in \mathbb{N}\}$.

Claim 3: The subspace $B$ of $G$ is a closed copy of $S_{\omega}$.

First, the set $B$ is closed in $G$. Indeed, if not, then there exists a point
$x\in G\setminus B$ such that $x\in \overline{B}$. Obviously, we have $x\neq
e$, and hence $p(e, x)\neq e$ since $p(e, e)=e$ and $P(e, G)$ is a
homeomorphism. Since $A$ is closed, there is an open neighborhood
$V$ of $e$ such that $|p(V, x\cdot V)\cap A|\leq 1$ or $p(V, x\cdot
V)\cap A\subset \{x_{k}\}\cup\{x_{k}(m): m\in \mathbb{N}\}$ for some
$k\in \mathbb{N}$. Let $U$ be an open neighborhood of $e$ with
$U\cdot (x\cdot U)\subset V\cdot (x\cdot V)$ and $e\not\in x\cdot
U$. Then $x\cdot U$ contains an infinite subset $\{y_{n_{i}}(m_{i}):
i\in \mathbb{N}\}\subset B$, where $n_{i}\neq n_{j}$ for distinct
$i, j\in \mathbb{N}$. Since $x_{n}\rightarrow e$ as
$n\rightarrow\infty$, we can assume that $\{x_{n}: i\in
\mathbb{N}\}\subset U$. Therefore, for each $i\in \mathbb{N}$, we
have
$$x_{n_{i}}(m_{i})=p(x_{n_{i}}, q(x_{n_{i}},
x_{n_{i}}(m_{i})))\subset p(U, x\cdot U)\subset p(V, x\cdot V),$$
which implies that $\{x_{n_{i}}(m_{i}): i\in \mathbb{N}\}\subset
p(V, x\cdot V)$. This is a contradiction.

Let $f: \mathbb{N}\rightarrow\mathbb{N}$. Then $C=\cup\{y_{n}(m):
m\leq f(n), n\in \mathbb{N}\}$ does not have any cluster point.
Indeed, if not, then there exists a point $x\in\overline{C\setminus\{x\}}$.
Suppose that $V_{1}$ is an open neighborhood of $e$ with $|p(V_{1},
x\cdot V_{1})\cap\{x_{n}(m): m\leq f(n), n\in \mathbb{N}\}|\leq 1$,
and that $U_{1}$ is an open neighborhood of $e$ with $U_{1}\cdot
(x\cdot U_{1})\subset V_{1}\cdot (x\cdot V_{1})$. Then $x\cdot
U_{1}$ contains an infinite subset $\{y_{k_{i}}(l_{i}): i\in
\mathbb{N}\}$ of $C$. Since $x_{n}\rightarrow e$ as
$n\rightarrow\infty$, we can assume that $\{x_{k_{i}}: i\in
\mathbb{N}\}\subset U_{1}$. Therefore, for each $i\in \mathbb{N}$,
we have
$$x_{k_{i}}(l_{i})=p(x_{k_{i}}, q(x_{k_{i}},
x_{k_{i}}(l_{i})))\subset p(U_{1}, x\cdot U_{1})\subset p(V_{1},
x\cdot V_{1}),$$ which implies that $\{x_{k_{i}}(l_{i}): i\in
\mathbb{N}\}\subset p(V_{1}, x\cdot V_{1})$. This is a
contradiction.

Necessity. Let $A=\{e\}\cup\{y_{n}(m): m, n\in\mathbb{N}\}$ be a
closed copy of $S_{\omega}$, where for each $n\in \mathbb{N}$,
$y_{n}(m)\rightarrow e$ as $m\rightarrow\infty$. It is obvious that
there is a non-trivial sequence $\{x_{n}\}$ converging to $e$ as
$n\rightarrow\infty$, where $x_{n}\neq e$ for each $n\in
\mathbb{N}$. For each $n\in \mathbb{N}$, let $U_{n}$ be an open
neighborhood of $x_{n}$ with $\overline{U_{i}}\cap
\overline{U_{j}}=\emptyset$ for distinct $i, j\in \mathbb{N}$. Let
$x_{n}(m)=x_{n}\cdot y_{n}(m)$, for each $n, m\in \mathbb{N}$. For
each $n\in \mathbb{N}$, we have $x_{n}(m)\rightarrow x_{n}$ as
$m\rightarrow\infty$. Without loss of generality, we assume that
$\{x_{n}(m): m\in \mathbb{N}\}\subset U_{n}$. Put
$B=\{e\}\cup\{x_{n}: n\in \mathbb{N}\}\cup \{x_{n}(m): n, m\in
\mathbb{N}\}.$

Claim 4: The subspace $B$ of $G$ is a closed copy of $S_{2}$.

First, we show that $B$ is closed in $G$. Suppose not, there  is a
point $x\in G\setminus B$ such that $x\in \overline{B}$. It is easy
to see that $q(e, x)\neq e$. Since $A$ is closed, there is an open
neighborhood $V$ of $e$ such that $q(V, x\cdot V)\cap
(A\setminus\{q(e, x)\})=\emptyset$. Let $U$ be an open neighborhood
of $e$ with $q(U, x\cdot U)\subset q(V, x\cdot V)$ and $x\cdot
\overline{U}\cap\{x_{n}: n\in \mathbb{N}\}=\emptyset$. Clearly, for
each $n\in \mathbb{N}$, the set $x\cdot U\cap \{x_{n}(m): m\in \mathbb{N}\}$
is finite. Moreover, the set $x\cdot U$ contains infinitely many elements of
$\{x_{n}(m): n, m\in \mathbb{N}\}$, and we denote them by
$\{x_{n_{i}}(m_{i}): i\in \mathbb{N}\}$. Since $x_{n}\rightarrow e$
as $n\rightarrow\infty$, without loss of generality, we assume that
$\{x_{n}: n\in \mathbb{N}\}\subset U$. Therefore, for each $i\in N$,
we have
$$y_{n_{i}}(m_{i})=q(x_{n_{i}}, p(x_{n_{i}}, y_{n_{i}}(m_{i})))\subset q(U, x\cdot U)\subset q(V, x\cdot V),$$
which implies that $q(V, x\cdot V)$ contains infinitely many
$y_{n}(m)$'s.  This is a contradiction.

If $f: \mathbb{N}\rightarrow\mathbb{N}$, similarly as in the proof
of Claim 3, then $\cup\{x_{n}(m): m\leq f(n), n\geq k\ \mbox{for
some}\ k\in\mathbb{N}\}$ is closed in $G$. Hence $B$ is a closed
copy of $S_{2}$.
\end{proof}

\begin{definition}
 Let $\mathscr{P}$ be a family of subsets of
a space $X$.
\begin{enumerate}
\item The family $\mathscr{P}$ is called a {\it $wcs^{\ast}$-network} \cite{LT}
of $X$, if whenever sequence $\{x_{n}\}$ converges to $x\in U\in\tau
(X)$, there is a $P\in\mathscr{P}$ such that $P\subset U$ and for
each $n\in \mathbb{N}, x_{m_{n}}\in P$ for some $m_{n} > n$.

\item The family $\mathscr{P}$ is called a {\it $k$-network} \cite{PO} if
whenever $K$ is a compact subset of $X$ and $K\subset U\in \tau
(X)$, there is a finite subfamily $\mathscr{P}^{\prime}\subset
\mathscr{P}$ such that $K\subset \cup\mathscr{P}^{\prime}\subset U$.
\end{enumerate}
\end{definition}

\begin{corollary}\label{c0}
Let $G$ be a sequential rectifiable space. If $G$ has a
point-countable $wcs^{\ast}$-network, then $G$ is metrizable if and
only if $G$ contains no closed copy of $S_{2}$.
\end{corollary}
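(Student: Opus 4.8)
The plan is to prove the two implications separately, with essentially all of the content lying in the backward direction, where Theorem~\ref{t7} converts the hypothesis into a form to which a metrization criterion for point-countable networks applies.

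For the necessity (``$G$ metrizable $\Rightarrow$ no closed copy of $S_2$''), I would argue through first-countability. A metrizable space is first-countable, and every subspace of a metrizable space is again metrizable, hence first-countable. The Arens' space $S_2$, however, fails to be first-countable at the point $\infty$; indeed it is not even Fr\'echet--Urysohn, since $\infty$ lies in the closure of $\{x_n(m):m,n\in\mathbb{N}\}$ but no sequence drawn from these isolated points converges to $\infty$. Consequently $S_2$ cannot embed as a subspace of a metrizable space at all, let alone as a closed copy, and so a metrizable $G$ contains no closed copy of $S_2$.

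For the sufficiency, suppose $G$ contains no closed copy of $S_2$. The first step is to invoke Theorem~\ref{t7}: in a rectifiable space a closed copy of $S_\omega$ exists if and only if a closed copy of $S_2$ exists; hence $G$ also contains no closed copy of $S_\omega$. Now $G$ is a sequential space with a point-countable $wcs^\ast$-network that contains no closed copy of either $S_\omega$ or $S_2$. The second step applies the metrization theory for point-countable networks developed in \cite{LT}: such a space is first-countable. By homogeneity it suffices to verify first-countability at the single point $e$, whence it propagates to all of $G$. Finally, a first-countable rectifiable space is metrizable by \cite[Theorem~3.2]{G1996}, which closes the argument.

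The main obstacle is the second step, namely extracting first-countability from a point-countable $wcs^\ast$-network. A $wcs^\ast$-network need not consist of neighborhoods, so one cannot read off a countable local base directly; its members merely capture tails of convergent sequences. The role of the two excluded spaces is precisely to suppress the two obstructions to first-countability in a sequential space equipped with such a network: the absence of $S_\omega$ forbids the fan-like branching of convergent sequences that would force uncountable character at a point, while the absence of $S_2$ rules out the Arens-type configurations in which a sequence of sequences admits no single converging diagonal. Turning these exclusions into an explicit countable neighborhood base at $e$ is the technical heart; within the present paper it is cited as a known consequence of the theory of point-countable covers, after which Theorem~\ref{t7} and \cite[Theorem~3.2]{G1996} assemble the corollary.
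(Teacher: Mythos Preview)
Your proof follows essentially the same route as the paper's: necessity is immediate, and for sufficiency you pass from ``no closed $S_2$'' to ``no closed $S_\omega$'' via Theorem~\ref{t7}, then invoke an external first-countability criterion for sequential spaces with point-countable $wcs^\ast$-networks, and finish with Gul'ko's metrization theorem \cite[Theorem~3.2]{G1996}. The only difference is bibliographic: the paper cites \cite[Corollary~2.1.11]{Ls3} rather than \cite{LT} for the first-countability step, and your explicit mention that both $S_\omega$ and $S_2$ must be excluded is in fact the precise content of that cited result (the paper's phrasing ``hence $G$ is first-countable'' tacitly uses both exclusions, since $S_2$ itself is sequential with a countable $wcs^\ast$-network and no closed copy of $S_\omega$).
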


\begin{proof}
Obviously, it is sufficient to show the sufficiency. If $G$ contains no
closed copy of $S_{2}$, then $G$ contains no closed copy of
$S_{\omega}$ by Theorem~\ref{t7}, and hence $G$ is first-countable
space by \cite[Corollary 2.1.11]{Ls3}. Therefore, it follows that $G$ is metrizable
by \cite{G1996}.
\end{proof}

Let $\mathscr{B}=\{B_{\alpha}:\alpha\in H\}$ be a family of subsets
of a space $X$. The family $\mathscr{B}$ is {\it point-discrete} if
$\{x_{\alpha}:\alpha\in H\}$ is closed and discrete in $X$, whenever
$x_{\alpha}\in B_{\alpha}$ for each $\alpha\in H$. The family $\mathscr{B}$ is
{\it hereditarily closure-preserving} (abbrev. HCP) if, whenever a
subset $S(P)\subset P$ is chosen for each $P\in\mathscr{B}$, the
family $\{S(P): P\in \mathscr{B}\}$ is closure-preserving.

\begin{theorem}\label{t12}
Let $G$ be a rectifiable space. If $G$ has a $\sigma$-point-discrete
closed $k$-network, then $G$ contains no closed copy of $S_{\omega_{1}}$.
\end{theorem}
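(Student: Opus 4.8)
The plan is to argue by contradiction. Suppose $G$ contains a closed copy $S$ of $S_{\omega_{1}}$; since $G$ is homogeneous, I would first move its apex to the right neutral element $e$, so that $S=\{e\}\cup\{a_{n}^{\alpha}:n\in\mathbb{N},\ \alpha<\omega_{1}\}$, where each spoke $\{a_{n}^{\alpha}:n\in\mathbb{N}\}$ consists of isolated points and converges to $e$ as $n\to\infty$. Because $S$ is closed in $G$, every cluster point in $G$ of a subset of $S$ again lies in $S$, so convergence of sequences drawn from $S$ may be tested inside the fan topology of $S$.

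Next I would extract network members adapted to the spokes. Let $\mathscr{P}=\bigcup_{n\in\mathbb{N}}\mathscr{P}_{n}$ be a $\sigma$-point-discrete closed $k$-network for $G$, each $\mathscr{P}_{n}$ being point-discrete and consisting of closed sets. For fixed $\alpha<\omega_{1}$ the convergent sequence $K_{\alpha}=\{e\}\cup\{a_{n}^{\alpha}:n\in\mathbb{N}\}$ is compact, so applying the $k$-network property to $K_{\alpha}\subset G$ yields a finite subfamily covering $K_{\alpha}$; one of its members, call it $P_{\alpha}$, must contain infinitely many terms of the spoke, and since $P_{\alpha}$ is closed it also contains their limit $e$. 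Thus for every $\alpha$ I obtain $P_{\alpha}\in\mathscr{P}$ with $e\in P_{\alpha}$ and $P_{\alpha}\cap\{a_{n}^{\alpha}:n\in\mathbb{N}\}$ infinite. As there are only countably many levels $\mathscr{P}_{n}$, a pigeonhole argument fixes one level $\mathscr{P}_{n_{0}}$ and an uncountable set $I\subset\omega_{1}$ with $P_{\alpha}\in\mathscr{P}_{n_{0}}$ for all $\alpha\in I$.

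The aim is now to choose, for the members of the point-discrete family $\mathscr{P}_{n_{0}}$, one point apiece so that the resulting selection accumulates at $e$; this would contradict point-discreteness, which demands that every such selection be closed and discrete. The main obstacle is that this cannot be achieved inside $S$ alone: any choice of a single point from each spoke is a transversal of the fan and is therefore closed and discrete in $S$, hence in $G$. This is no accident---$S_{\omega_{1}}$ itself carries a $\sigma$-point-discrete closed $k$-network, built from the parallel tails $\{e\}\cup\{a_{n}^{\alpha}:n\ge N\}$---so the rectifiable structure of $G$ must be used in an essential way to break this parallelism.

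The key step, and the hard part, is to use rectifiability to overlap the members $P_{\alpha}$ along a single genuine convergent sequence. Concretely, I would exploit homogeneity together with the joint continuity of the multiplication $p$ and of $q=\pi_{2}\circ\varphi$ at $(e,e)$: for spoke points lying close to $e$ the values $p(a,b)$ and $q(a,b)$ again lie close to $e$, so by translating the tails of the spokes through suitable left translations $p(a_{n}^{\alpha},\cdot)$---each a homeomorphism fixing the local behaviour at $e$---one can arrange that uncountably many of the sets $P_{\alpha}$, $\alpha\in I$, simultaneously meet the points of one fixed nontrivial sequence $t_{k}\to e$ produced from the fan. Granting such an overlap, I assign to these uncountably many distinct members of $\mathscr{P}_{n_{0}}$ successive points $t_{k}$ (reusing each $t_{k}$ for as many members as necessary); the selected point set is then an infinite subset of $\{t_{k}:k\in\mathbb{N}\}$ and so converges to $e$, whence it is not closed and discrete. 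This violates the point-discreteness of $\mathscr{P}_{n_{0}}$ and completes the contradiction. I expect the synchronization in this overlap step---controlling membership in the fixed closed sets $P_{\alpha}$ while simultaneously forcing convergence to $e$ via the rectification, and doing so for uncountably many indices at once---to be the principal technical difficulty; the counting together with Theorem~\ref{t7}, which already shows that the countable subfans of $S$ are closed copies of $S_{\omega}$ and hence of $S_{2}$, corroborates that the genuine obstruction is the passage from countably to uncountably many spokes.
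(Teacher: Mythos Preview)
Your outline correctly identifies the target contradiction (a convergent selection from a point-discrete level $\mathscr{P}_{n_0}$) and rightly observes that the argument cannot be completed inside the fan $S$ alone. But two genuine gaps remain, and the paper's proof shows they are not mere details.

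\textbf{Distinctness.} Applying the $k$-network to the bare spoke $K_{\alpha}=\{e\}\cup\{a^{\alpha}_{n}\}$ does not force the chosen $P_{\alpha}$'s to be distinct members of $\mathscr{P}$: a single closed network element containing $e$ together with a tail of every spoke would serve for all $\alpha$. Point-discreteness then says nothing. You assert ``uncountably many distinct members'' in the final paragraph without having secured it.

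\textbf{The overlap step.} Your proposed mechanism---translate tails of spokes after the $P_{\alpha}$ have been chosen, hoping to make uncountably many of them meet a fixed convergent sequence---does not work as stated: translates of the $P_{\alpha}$ need not lie in $\mathscr{P}$, and you have no control over what the fixed closed sets $P_{\alpha}$ contain beyond $e$ and a tail of one spoke. The paper's idea is to use rectifiability \emph{before} invoking the $k$-network. Fix an auxiliary sequence $x_{n}\to e$ and, for each $\alpha$, form the set
\[
C_{\alpha}=\{e\}\cup\{x_{n}:n\in\mathbb{N}\}\cup\{x_{n}\cdot x_{i}(\alpha):n\in\mathbb{N},\ i\ \text{large}\},
\]
which is countably compact (hence compact, via the $\sigma$-point-discrete network). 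Any finite $k$-network cover of $C_{\alpha}$ then yields a $P_{\alpha}$ containing infinitely many of the \emph{common} points $x_{n}$ together with a ``label'' $a_{\alpha}=x_{n(\alpha)}\cdot x_{m(\alpha)}(\alpha)$. A separate calculation with $p$ and $q$ shows that any transversal $\{a_{\alpha}:\alpha<\omega_{1}\}$ of this kind is closed discrete; this lets one run the $k$-network covers inductively inside the open sets $G\setminus\{a_{\beta}:\beta<\alpha\}$, forcing the $P_{\alpha}$ to be pairwise distinct. Now pigeonhole gives uncountably many distinct $P_{\alpha}\in\mathscr{P}_{n_0}$, each containing infinitely many $x_{n}$'s, and a selection of distinct $x_{n}$'s contradicts point-discreteness. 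The rectifiable structure is thus used to manufacture the right compact sets, not to post-process already-chosen network members.
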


\begin{proof}
Suppose that $G$ contains a closed copy of
$S_{\omega_{1}}=\{e\}\cup\{x_{n}(\alpha): \alpha <\omega_{1}, n\in
\mathbb{N}\}$, where, for each $\alpha <\omega_{1}$,
$x_{n}(\alpha)\rightarrow e$ as $n\rightarrow\infty$. Obviously,
there exists a non-trivial sequence $\{x_{n}\}$ such that $x_{n}$
converges to $e$. By the regularity of $G$, we can take an open
subset $U_{n}$ of $G$ such that $x_{n}\in U_{n}$,
$\overline{U_{i}}\cap\overline{U_{j}}=\emptyset (i\neq j)$ and
$\overline{U_{n}}\cap \{x_{i}: i\in \mathbb{N}\}=\{x_{n}\}$. For
each $m\in \mathbb{N}$ and $\alpha <\omega_{1}$, it is easy to see
that $x_{m}\cdot x_{n}(\alpha)\rightarrow x_{m}$ as $n\rightarrow
\infty$ and $\{x_{m}\cdot x_{n}(\alpha): n\in \mathbb{N}\}$ is
eventually in $U_{m}$. Without loss of generality, we assume that
$\{x_{m}\cdot x_{n}(\alpha): n\in \mathbb{N}\}\subset U_{m}$.

Claim 5: The subspace $B=\{x_{n(\alpha)}\cdot x_{m(\alpha)}(\alpha): \alpha
<\omega_{1}\}$ of $G$ is discrete for $n(\alpha),
m(\alpha)\in \mathbb{N}$.

Case 1: The set $\{n(\alpha): \alpha <\omega_{1}\}$ is finite.

We denote $\{n(\alpha): \alpha <\omega_{1}\}$ by $\{l_{1}, \cdots
,l_{k}\}$. Since $\{x_{g(\alpha)}(\alpha): \alpha <\omega_{1}\}$ is
discrete for each $g: \omega_{1}\rightarrow\mathbb{N}$ and, for each
$x\in G$, the map $p(x, G)$ is a homeomorphism, the set $\{x_{l_{i}}\cdot
x_{g(\alpha)}(\alpha): \alpha <\omega_{1}\}$ is discrete for each
$1\leq i\leq k$. Therefore, the subspace $B$ is discrete.

Case 2: The set $\{n(\alpha): \alpha <\omega_{1}\}$ is infinite.

Suppose that $B$ is non-discrete, and that $x$ is a cluster point of
$B$. For each $g: \omega_{1}\rightarrow\mathbb{N}$, there exists an
open neighborhood $V$ of $e$ with $$|q(V, (x\cdot V))\cap
\{x_{g(\alpha)}(\alpha): \alpha <\omega_{1}\}|\leq 1.$$ Let $U$ be
an open neighborhood of $e$ with $q(U, (x\cdot U))\subset q(V,
(x\cdot V))$. Obviously, the set $C=x\cdot U\cap \{x_{n(\alpha)}\cdot
x_{m(\alpha)}(\alpha): \alpha <\omega_{1}\}\neq\emptyset$ for
infinitely many $n(\alpha)$, and we denote it by $\{n_{i}: i\in
\mathbb{N}\}$. Since $x_{n}\rightarrow e$ as $n\rightarrow\infty$,
without loss of generality, we assume that $\{x_{n}: n\in
\mathbb{N}\}\subset U$. Obviously, for each $i\in \mathbb{N}$, we
have
$$x_{m(\alpha)}(\alpha)=q(x_{n_{i}(\alpha)}, p(x_{n_{i}(\alpha)},
x_{m(\alpha)}(\alpha)))\subset q(U, x\cdot U)\subset q(V, x\cdot
V).$$ Then $|q(V, x\cdot V)\cap \{x_{g(\alpha)}(\alpha): \alpha
<\omega_{1}\}|\geq\omega$, which is a contradiction.

For $\alpha <\omega_{1}$, let $C_{\alpha}=\{e\}\cup\{x_{n}: n\in
\mathbb{N}\}\cup\{x_{n}\cdot x_{i}(\alpha): n\in \mathbb{N}, i\geq
f_{n}(\alpha)\}$. Obviously, we have $x_{n}\cdot
x_{j_{n}}(\alpha)\rightarrow e$ as $n\rightarrow\infty$, where
$j_{n}> f_{n}(\alpha)$. Since every infinitely subset of
$C_{\alpha}$ has a cluster point in $C_{\alpha}$, it follows that $C_{\alpha}$ is
countably compact. It is well known that every countably compact
space with a $\sigma$-point-discrete network has a countable
network, and hence $C_{\alpha}$ is compact.

Let $\mathscr{P}=\bigcup_{n\in \mathbb{N}}\mathscr{P}_{n}$ be a
$\sigma$-point-discrete $k$-network consisting of closed subsets of
$G$. Then there exists a finite subfamily
$\mathscr{P}^{\prime}\subset \mathscr{P}$ such that $C_{0}\subset
\cup\mathscr{P}^{\prime}$. Pick a $P_{0}\in\mathscr{P}^{\prime}$
such that $P_{0}$ contains a point $a_{0}=x_{n(0)}\cdot x_{m(0)}(0)$
and infinitely many $x_{n}$'s. Suppose that, for each $\alpha
<\beta$, there exists a $P_{\alpha}\in\mathscr{P}$ such that
$P_{\alpha}$ contains a point $a_{\alpha}=x_{n(\alpha)}\cdot
x_{m(\alpha)}(\alpha)$ and infinitely many $x_{n}$'s. Then we have
$C_{\beta}\subset G\setminus\{a_{\alpha}: \alpha <\beta\}$, which is
open in $G$ by the Claim 5. Therefore, there exists a finite
subfamily $\mathscr{P}^{\prime\prime}\subset \mathscr{P}$ such that
$C_{\beta}\subset \cup\mathscr{P}^{\prime\prime}\subset
G\setminus\{a_{\alpha}: \alpha <\beta\}$. Pick a
$P_{\beta}\in\mathscr{P}$ such that $P_{\beta}$ contains a point
$a_{\beta}=x_{n(\beta)}\cdot x_{m(\beta)}(\beta)$ and infinitely
many $x_{n}$'s. By induction, we can pick that $\{P_{\alpha}: \alpha
<\omega_{1}\}\subset \mathscr{P}$ such that $P_{\alpha}\neq
P_{\beta}$ whenever $\alpha\neq\beta$ and each $P_{\alpha}$ contains
infinitely many $x_{n}$'s. Therefore, there are uncountably many
$P_{\alpha}\in\mathscr{P}_{n}$ for some $n\in \mathbb{N}$. Since
$\mathscr{P}_{n}$ is point-discrete, there is a subsequence $L$ of
$\{x_{n}: n\in \mathbb{N}\}$ such that $L$ is discrete, which is a
contradiction.
\end{proof}

In \cite{H1992}, H.J. Junnila and Y.Z. Qiu have proved that a
space $X$ is an $\aleph$-space\footnote{A space $X$ is called an
{\it $\aleph$-space} if $X$ has a $\sigma$-locally finite
$k$-network.} if and only if $X$ has a $\sigma$-HCP $k$-network and
contains no closed copy of $S_{\omega_{1}}$. Therefore, we have the
following corollary by Theorem~\ref{t12}.

\begin{corollary}
A rectifiable space $G$ is an $\aleph$-space if and only if $G$ has
a $\sigma$-HCP $k$-network.
\end{corollary}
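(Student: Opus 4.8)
The plan is to prove both implications, treating the forward direction as routine and concentrating all effort on converting the given network into a form to which Theorem~\ref{t12} applies. For necessity, if $G$ is an $\aleph$-space then $G$ has a $\sigma$-locally finite $k$-network; since every locally finite family is hereditarily closure-preserving, the very same family is already a $\sigma$-HCP $k$-network, and nothing more is needed. For sufficiency, suppose $G$ has a $\sigma$-HCP $k$-network. By the Junnila--Qiu characterization in \cite{H1992}, $G$ is an $\aleph$-space as soon as we know that $G$ contains no closed copy of $S_{\omega_1}$, and by Theorem~\ref{t12} it therefore suffices to produce a $\sigma$-point-discrete \emph{closed} $k$-network on $G$. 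Thus the whole argument reduces to manufacturing such a network out of the given $\sigma$-HCP one.

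I would do this in two steps. First, every HCP family $\mathscr{B}=\{B_\alpha\}$ is point-discrete: given any selection $x_\alpha\in B_\alpha$, applying the definition of HCP to the singleton choices $S(B_\alpha)=\{x_\alpha\}$ shows that $\{\{x_\alpha\}\}$ is closure-preserving, and since $G$ is $T_1$ this forces every subset of $\{x_\alpha:\alpha\in H\}$ to be closed, i.e. $\{x_\alpha\}$ is closed and discrete. Hence a $\sigma$-HCP family is automatically $\sigma$-point-discrete. Second, I would replace each member $P$ of the network by its closure $\overline{P}$, using the regularity of $G$ to keep the $k$-network property: given a compact $K\subset U$ with $U$ open, choose an open $V$ with $K\subset V\subset\overline{V}\subset U$, apply the $k$-network property to extract a finite subfamily with $K\subset\bigcup_i P_i\subset V$, and observe that then $K\subset\bigcup_i\overline{P_i}\subset\overline{V}\subset U$. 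So $\{\overline{P}\}$ is again a $k$-network, now consisting of closed sets.

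Combining the two steps, I would argue that $\{\overline{P}\}$ remains $\sigma$-HCP and is therefore $\sigma$-point-discrete by the first step; this gives a $\sigma$-point-discrete closed $k$-network on $G$. Theorem~\ref{t12} then rules out a closed copy of $S_{\omega_1}$ in $G$, and \cite{H1992} finally yields that $G$ is an $\aleph$-space, completing the nontrivial implication.

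The step I expect to be the main obstacle is the second one, specifically the claim that passing to closures preserves point-discreteness. The $k$-network part is handled cleanly by regularity as above, but point-discreteness of $\{P\}$ does not by itself transfer to $\{\overline{P}\}$, since a chosen point of $\overline{P}$ need not lie in $P$. The genuinely delicate point is thus that the closures of a hereditarily closure-preserving family again form a hereditarily closure-preserving family (whence point-discrete by the first step); this is the lemma I would isolate and prove, or cite, with care, as it is the only place where the passage from HCP to a usable \emph{closed} network is actually earned.
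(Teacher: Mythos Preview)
Your approach is correct and matches the paper's exactly: the paper simply cites the Junnila--Qiu characterization \cite{H1992} together with Theorem~\ref{t12}, leaving implicit the passage from a $\sigma$-HCP $k$-network to the $\sigma$-point-discrete closed $k$-network required in Theorem~\ref{t12}, and you have correctly unpacked that passage. The step you single out as delicate is indeed the crux and is true in regular spaces: given $S_\alpha\subset\overline{F_\alpha}$ and $x\notin\overline{S_\alpha}$, regularity yields open $W_\alpha\ni x$ with $\overline{W_\alpha}\cap\overline{S_\alpha}=\emptyset$, and setting $T_\alpha=F_\alpha\setminus\overline{W_\alpha}\subset F_\alpha$ one checks $S_\alpha\subset\overline{T_\alpha}$ and $x\notin\overline{T_\alpha}$, so HCP of $\{F_\alpha\}$ applied to $\{T_\alpha\}$ gives $x\notin\overline{\bigcup S_\alpha}$; hence $\{\overline{F_\alpha}\}$ is HCP (and therefore point-discrete), completing your bridge.
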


\bigskip

\section{When is a rectifiable space a Moscow space?}
A space $X$ is called {\it Moscow} \cite{A2008} if for each open
subset $U$ of $X$, the set $\overline{U}$ is the union of a family of
$G_{\delta}$-sets in $X$, that is, for every $x\in \overline{U}$,
there exists a $G_{\delta}$-set $P$ in $X$ with $x\in P\subset
\overline{U}$.

A point $x\in X$ is said to be a {\it point of canonical weak
pseudocompactness} \cite{A2008} or a {\it cwp-point} of $X$, for
brevity, if the following condition is satisfied:

(CWP) For every regular open subset $U$ of $X$ such that $x\in
\overline{U}$, there is a sequence $\{A_{n}: n\in \mathbb{N}\}$ of
subsets of $U$ such that $x\in \overline{A_{n}}$, for any $n\in
\mathbb{N}$, and for every indexed family $\eta =\{O_{n}: n\in
\mathbb{N}\}$ of open subsets of $X$ satisfying $O_{n}\cap
A_{n}\neq\emptyset$ for any $n\in \mathbb{N}$, the family $\eta$ has
an accumulation point in $X$.

A space $X$ is called {\it pointwise canonically weakly
pseudocompact} \cite{A2008} if each point of $X$ is a cwp-point.

\begin{theorem}\label{t5}
If a rectifiable space $G$ is pointwise canonically weakly
pseudocompact, then $G$ is a Moscow space.
\end{theorem}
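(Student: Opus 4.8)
The plan is to verify the Moscow property at the single point $e$ and then invoke homogeneity. So fix a regular open set $U$ with $e\in\overline{U}$; the goal is to produce a $G_\delta$-set $P$ with $e\in P\subset\overline{U}$. First I would apply pointwise canonical weak pseudocompactness at $e$ to this regular open $U$: this yields a sequence $\{A_n:n\in\mathbb{N}\}$ of subsets of $U$ with $e\in\overline{A_n}$ for each $n$, together with the accumulation guarantee on families of open sets meeting the $A_n$. The heart of the matter is to convert this sequence into a descending sequence of open neighborhoods of $e$ whose intersection is trapped inside $\overline{U}$, since that intersection is the desired $G_\delta$-set.

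Next I would exploit the rectifiable structure via Theorem~\ref{t9}. The natural candidate for the $G_\delta$-set is an intersection $P=\bigcap_{n}O_n$ where each $O_n$ is an open neighborhood of $e$; to force $P\subset\overline{U}$ I want to choose the $O_n$ so that any point $x\in P$ can be ``pushed'' back into a region that meets all the $A_n$, and then use the accumulation clause of (CWP) to locate a cluster point inside $\overline{U}$. Concretely, for a point $x\in\bigcap_n O_n\setminus\overline{U}$ I would build, using continuity of $p$ and $q$ and the identities $p(x,q(x,y))=y$, $q(x,x)=e$, a family of open sets $\{q(V,x\cdot V):\ldots\}$ or translates $x\cdot V$ that each meet $A_n$; the translation homeomorphisms $p(x,G)$, $q(x,G)$ are exactly what let me move the neighborhood filter at $e$ to a neighborhood filter at $x$ while preserving intersections with the $A_n\subset U$. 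The accumulation point produced by (CWP) must then be shown to lie in $\overline{U}$, contradicting $x\notin\overline{U}$ if $x$ were separated from $\overline{U}$ by an open set disjoint from $U$.

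The main obstacle I anticipate is the bookkeeping that makes the accumulation point of the family $\eta=\{O_n\}$ actually certify $x\in\overline{U}$: (CWP) only hands back \emph{some} accumulation point of $\eta$ in $X$, not one located near $x$, so the nontrivial step is to arrange the open sets $O_n$ (built as rectangular images $p(W_n,x\cdot W_n)$ or the like, shrinking with $n$) small enough that every accumulation point of $\{O_n\}$ is forced to equal $x$ — this is where regularity and the $T_1$ hypothesis, together with the homeomorphism property of $p(x,G)$, should pin the cluster point down. Granting that, since each $O_n$ meets $U$ (as $O_n$ is a neighborhood of a point adherent to the $A_n\subset U$), the accumulation point of the family lies in $\overline{U}$, and identifying it with $x$ gives $x\in\overline{U}$; hence $\bigcap_n O_n\subset\overline{U}$, furnishing the required $G_\delta$-set at $e$. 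Homogeneity of the rectifiable space then upgrades this from the point $e$ to every point of every closure $\overline{U}$, so $G$ is Moscow.
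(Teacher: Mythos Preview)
Your proposal has a genuine gap at the step you yourself flagged as the ``main obstacle,'' and the patch you offer does not work. You claim that once each $O_n$ meets $A_n\subset U$, the accumulation point of $\{O_n\}$ must lie in $\overline{U}$. This is false: an accumulation point of a family of open sets is a point every neighborhood of which meets infinitely many members of the family, and nothing forces those intersections to land in $U$. Concretely, if each $O_n$ is the union of a small ball about $x$ and a piece meeting $A_n$, then $x$ is an accumulation point but need not lie in $\overline{U}$. Your fallback---making the $O_n$ shrink so tightly that \emph{every} accumulation point equals $x$---forces $\{O_n\}$ to converge to $x$; but then $O_n\cap A_n\neq\emptyset$ already requires $x\in\overline{A_n}\subset\overline{U}$, which is exactly what you were assuming fails. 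So the contradiction strategy is circular: either the $O_n$ do not meet the $A_n$, or they do and you have no control over where the accumulation point falls.

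The paper's proof avoids this trap by not arguing by contradiction at all. It builds a nested sequence of neighborhoods $V_n$ of $e$ with $\overline{V_{n+1}}\cdot V_{n+1}\subset V_n$ and $q(V_{n+1},V_{n+1})\subset V_n$, together with points $a_n\in A_n\cap V_{n-1}$ satisfying $a_n\cdot V_n\subset U$, and applies (CWP) to the family $\{a_n\cdot V_{n+1}\}$. The resulting accumulation point $a$ is not pinned down in advance; instead one proves (Claim~6) that it lands in $B=\bigcap_n V_n$ and (Claim~7) that the algebraic conditions on the $V_n$ force $a\cdot B=B$. The entire $G_\delta$-set $B$ is then pushed into $\overline{U}$ via the translation by $a$: since $a$ is adherent to $\bigcup_n a_n\cdot V_{n+1}$, continuity of $p$ gives $B=a\cdot B\subset\overline{\bigcup_n(a_n\cdot V_{n+1})\cdot B}\subset\overline{U}$. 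The missing idea in your attempt is precisely this use of the accumulation point as a multiplier together with the invariance $a\cdot B=B$; without it, (CWP) gives you a point whose location you cannot exploit.
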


\begin{proof}
Let $U$ be a regular open subset of $G$. Obviously, it suffices to
show that if $e\in \overline{U}$, then there is a $G_{\delta}$-set
$P\subset G$ such that $e\in P\subset \overline{U}$. Thus let us
assume that $e\in \overline{U}$ and fix subsets $A_{n}\subset U$
such as in condition (CWP), where $x=e$.

Next we are going to define a sequence $\{V_{n}: n\in \mathbb{N}\}$
of open neighborhood $e$, and a sequence $\{a_{n}: n\in
\mathbb{N}\}\subset U$ with $a_{n}\in A_{n}$, for each $n\in
\mathbb{N}$. Firstly, take an $a_{1}\in A_{0}$, and let $V_{1}$ be
an open neighborhood of $e$ with $a_{1}\cdot V_{1}\subset U$. Assume
that an open neighborhood $V_{k}$ of $e$ is already defined, for
some $k\in \mathbb{N}$. Choose a point $a_{k+1}\in A_{k+1}\cap
V_{k}$. Let $V_{k+1}$ be an open neighborhood of $e$ such that
$\overline{V_{k+1}}\cdot V_{k+1}\subset V_{k}$, $q(V_{k+1},
V_{k+1})\subset V_{k}$ and $a_{k+1}\cdot V_{k+1}\subset U$. The
recursive definition is complete. Let $\zeta =\{a_{n}\cdot V_{n+1}:
n\in \mathbb{N}\}$ and $H$ be the set of all accumulation point of
$\zeta$ in $G$. Since $G$ is pointwise canonically weakly
pseudocompact, it follows that $H\neq\emptyset$. Put $B=\bigcap_{n\in
\mathbb{N}}V_{n}$. Obviously, we have $B=\bigcap_{n\in
\mathbb{N}}\overline{V_{n}}$, and hence $B$ is also closed in $G$.

Claim 6: The set $H$ contains in $B$.

It follows from $a_{n}\in V_{n-1}$ that we have $a_{n}\cdot
V_{n+1}\subset V_{n-1}\cdot V_{n+1}\subset V_{n-1}\cdot
V_{n-1}\subset V_{n-2}$, for each $n\geq 3$. Therefore, we have
$H\subset \overline{\cup\{a_{n}\cdot V_{n+1}: n\in \mathbb{N}, n\geq
k+1\}}\subset \overline{V_{k-1}}$ for each $k\geq 2$. Thus $H\subset
B$, whence Claim 6 follows.

Claim 7: We have $a\cdot B=B$, for every $a\in H$.

In fact, fix a point $a\in H$, we have $a\in B$ by Claim 6, and
hence, the point $a\in V_{n}$, for each $n\in \mathbb{N}$. For each $b\in B$
and $k\in \mathbb{N}$, since $p(a, b)\in V_{k}$, we have $p(a, b)\in
B$. Then $a\cdot B\subset B$. Take any point $b\in B$. For each
$n\in \mathbb{N}$, since $q(a, b)\in V_{n}$, it follows that $q(a, b)\in B$.
Therefore, we have $b=p(a, q(a, b))=a\cdot q(a, b)\in a\cdot B$. Thus
$B\subset a\cdot B$, and hence $a\cdot B=B$.

Fix a point $a\in H$. Then, by Claim 7, we have $$B=a\cdot B\subset
\overline{\bigcup_{n\in \mathbb{N}}(a_{n}\cdot V_{n+1})\cdot
B}\subset\overline{\bigcup_{n\in \mathbb{N}}(a_{n}\cdot V_{n})\cdot
V_{n}}\subset \overline{U}.$$ Since $e\in B$, it follows that $e\in
B\subset\overline{\bigcup_{n\in \mathbb{N}}(a_{n}\cdot V_{n})\cdot
V_{n}}\subset \overline{U}$. Since $B$ is a $G_{\delta}$-set, the space $G$ is
Moscow.
\end{proof}

\begin{lemma}\cite[Lemma 5]{V1990}\label{l12}
Let $X$ be a Moscow space, and suppose that $Y$ is a
$G_{\delta}$-dense\footnote{A subset $Y$ of $X$ is called {\it
$G_{\delta}$-dense} if every $G_{\delta}$ of $X$ meets $Y$.}
subspace of $X$. Then $Y$ is $C$-embedded\footnote{Remember that $Y$
is {\it C-embedded} in $X$ if every continuous real-valued function
on $Y$ extends continuously over $X$.} in $X$.
\end{lemma}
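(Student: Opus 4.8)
The plan is to deduce $C$-embedding from the classical Gillman--Jerison characterization: for a Tychonoff space $X$, a subspace $Y$ is $C$-embedded in $X$ if and only if $Y$ is $C^{\ast}$-embedded in $X$ and $Y$ is completely separated in $X$ from every zero-set of $X$ disjoint from $Y$. Since every nonempty zero-set of $X$ is in particular a nonempty $G_{\delta}$-set, the $G_{\delta}$-density of $Y$ forces every zero-set disjoint from $Y$ to be empty, so the second clause holds vacuously. Thus the whole statement reduces to showing that $Y$ is $C^{\ast}$-embedded in $X$, and this is exactly where the Moscow hypothesis enters.

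The core of the argument is the separation lemma: \emph{if $A$ and $B$ are completely separated in $Y$, then $\overline{A}\cap\overline{B}=\emptyset$ in $X$.} To prove it, fix a continuous $g\colon Y\to[0,1]$ with $g|_{A}=0$ and $g|_{B}=1$, and set $O_{1}=g^{-1}([0,1/3))$ and $O_{2}=g^{-1}((2/3,1])$, two disjoint open subsets of $Y$ with $A\subset O_{1}$ and $B\subset O_{2}$. Choose open sets $\tilde{O}_{1},\tilde{O}_{2}$ in $X$ with $\tilde{O}_{i}\cap Y=O_{i}$; since $Y$ is dense, $\overline{\tilde{O}_{i}}=\overline{O_{i}}$ (closures in $X$). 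Suppose toward a contradiction that some $x$ lies in $\overline{A}\cap\overline{B}\subset\overline{\tilde{O}_{1}}\cap\overline{\tilde{O}_{2}}$. Applying the Moscow property to the open set $\tilde{O}_{1}$ and the point $x\in\overline{\tilde{O}_{1}}$ yields a $G_{\delta}$-set $P_{1}$ with $x\in P_{1}\subset\overline{\tilde{O}_{1}}$, and similarly a $G_{\delta}$-set $P_{2}$ with $x\in P_{2}\subset\overline{\tilde{O}_{2}}$. Then $P_{1}\cap P_{2}$ is a nonempty $G_{\delta}$-set, so by $G_{\delta}$-density it contains some $y\in Y$. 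Taking traces on $Y$ gives $y\in\overline{O_{1}}^{\,Y}\cap\overline{O_{2}}^{\,Y}\subset g^{-1}([0,1/3])\cap g^{-1}([2/3,1])=\emptyset$, a contradiction. Hence $\overline{A}\cap\overline{B}=\emptyset$.

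Granting the lemma, $C^{\ast}$-embedding follows from Taimanov's extension theorem: a bounded continuous $f\colon Y\to[a,b]$ extends continuously over $X$ precisely when the $X$-closures of $f^{-1}(F_{0})$ and $f^{-1}(F_{1})$ are disjoint for every pair of disjoint closed sets $F_{0},F_{1}\subset[a,b]$. As $[a,b]$ is compact and normal, disjoint closed subsets are completely separated, whence $f^{-1}(F_{0})$ and $f^{-1}(F_{1})$ are completely separated in $Y$, and the separation lemma supplies the required disjoint closures. This extends every bounded continuous real function on $Y$ over $X$, i.e.\ $Y$ is $C^{\ast}$-embedded, and combining with the vacuous zero-set clause above yields that $Y$ is $C$-embedded in $X$.

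The step I expect to be most delicate is the separation lemma, specifically the passage through the Moscow property: the disjoint open sets $\tilde{O}_{1},\tilde{O}_{2}$ can have intersecting closures in a general $X$, and it is only the production of two $G_{\delta}$-sets inside those closures, together with $G_{\delta}$-density forcing their intersection to meet $Y$, that generates the contradiction. Care is also needed to keep the separating levels at $1/3$ and $2/3$ (rather than $1/2$) so that the traces $\overline{O_{1}}^{\,Y}$ and $\overline{O_{2}}^{\,Y}$ are genuinely disjoint, and to invoke throughout that $X$ is Tychonoff so that the zero-set and complete-separation language is available.
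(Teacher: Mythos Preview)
The paper does not supply its own proof of this lemma; it is quoted verbatim from Uspenskij \cite{V1990} and used as a black box. So there is no ``paper's proof'' to compare against---your task was really to reconstruct a proof of a cited result, and you have done so correctly. The separation lemma (completely separated sets in $Y$ have disjoint closures in $X$) is indeed the heart of the matter, and your derivation of it via the Moscow property and $G_{\delta}$-density is the standard one; the subsequent appeal to Ta\u{\i}manov's theorem for $C^{\ast}$-embedding is also the usual route.

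One small remark on the Tychonoff hypothesis you flag at the end: you can dispense with it. The Gillman--Jerison reduction is convenient but not necessary. Once you have $C^{\ast}$-embedding, take any continuous $f\colon Y\to\mathbb{R}$, extend the bounded map $\arctan\circ f\colon Y\to(-\pi/2,\pi/2)$ to $g\colon X\to[-\pi/2,\pi/2]$, and observe that $g^{-1}(\{\pm\pi/2\})$ is a $G_{\delta}$-set in $X$ disjoint from $Y$, hence empty by $G_{\delta}$-density; then $\tan\circ g$ extends $f$. This avoids zero-set language entirely and works under the paper's standing assumption of $T_{1}$ regularity. (Note also that $G_{\delta}$-density already implies ordinary density, since every nonempty open set is trivially a $G_{\delta}$-set, so Ta\u{\i}manov's theorem applies without further comment.)
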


It follows from Theorem~\ref{t5} and Lemma~\ref{l12} that we have
the following corollary.

\begin{corollary}
Let $G$ be a pointwise canonically weakly pseudocompact rectifiable
space, and $Y$ a $G_{\delta}$-dense subspace of $G$. Then $Y$ is
$C$-embedded in $G$.
\end{corollary}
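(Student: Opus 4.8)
The plan is to combine the two immediately preceding results without any additional machinery. First I would apply Theorem~\ref{t5}: since $G$ is a rectifiable space that is pointwise canonically weakly pseudocompact, Theorem~\ref{t5} tells us directly that $G$ is a Moscow space. This single step is what converts the hypothesis on $G$ into membership in the class of spaces covered by Lemma~\ref{l12}.

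With $G$ now identified as a Moscow space, I would invoke Lemma~\ref{l12} with $X = G$. That lemma asserts that any $G_{\delta}$-dense subspace of a Moscow space is $C$-embedded. Since $Y$ is, by hypothesis, a $G_{\delta}$-dense subspace of $G$, the lemma applies verbatim and yields that $Y$ is $C$-embedded in $G$, which is exactly the conclusion we seek.

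The only thing to check is that the hypotheses align precisely, and they do: the phrase ``pointwise canonically weakly pseudocompact rectifiable space'' is the exact trigger for Theorem~\ref{t5}, and ``$G_{\delta}$-dense subspace'' is exactly the hypothesis of Lemma~\ref{l12} applied to the Moscow space $G$. Consequently there is essentially no obstacle here; no construction, induction, or estimate is required beyond verifying this matching of hypotheses, so the proof reduces to a two-line chaining of Theorem~\ref{t5} into Lemma~\ref{l12}.
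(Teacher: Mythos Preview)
Your proposal is correct and matches the paper's own approach exactly: the paper simply states that the corollary follows from Theorem~\ref{t5} and Lemma~\ref{l12}, which is precisely the two-step chaining you describe.
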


Let $G$ be a rectifiable space, and $U\subset G$. A subset $A$ of
$G$ is called {\it $\omega$-deep} in $U$ if there is a
$G_{\delta}$-set $B$ in $G$ with $e\in B$ and $A\cdot B\subset U$.
We say that the {\it $g$-tightness} $t_{g}(G)$ of $G$ is countable
if, for each regular open subset $U$ of $G$ and each $x\in
\overline{U}$, there is an $\omega$-deep subset $A$ of $U$ such that
$x\in \overline{A}$.

\begin{theorem}\label{t6}
Every rectifiable space of countable $g$-tightness is a Moscow
space.
\end{theorem}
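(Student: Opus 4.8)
The plan is to verify the Moscow condition directly, exploiting the $G_{\delta}$-set that is handed to us by the definition of an $\omega$-deep subset. First I would carry out two routine reductions. For an arbitrary open set $U$, the set $W=\mathrm{int}(\overline{U})$ is regular open and satisfies $\overline{W}=\overline{U}$ (indeed $W\subseteq\overline{U}$ gives $\overline{W}\subseteq\overline{U}$, while $U\subseteq W$ gives $\overline{U}\subseteq\overline{W}$), so it is enough to establish the Moscow condition for regular open sets. Moreover, since $G$ is homogeneous and translations carry regular open sets, closures, and $G_{\delta}$-sets to sets of the same type, it suffices to show: for each regular open $U$ with $e\in\overline{U}$ there is a $G_{\delta}$-set $P$ with $e\in P\subseteq\overline{U}$.

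So fix a regular open $U$ with $e\in\overline{U}$. Because $t_{g}(G)$ is countable, there is an $\omega$-deep subset $A$ of $U$ with $e\in\overline{A}$; by the definition of $\omega$-deepness there is a $G_{\delta}$-set $B$ in $G$ with $e\in B$ and $A\cdot B\subseteq U$. The key step is the inclusion $\overline{A}\cdot B\subseteq\overline{U}$. This follows from the joint continuity of the multiplication $p$: since $\overline{A}\times B\subseteq\overline{A}\times\overline{B}=\overline{A\times B}$, continuity yields $p(\overline{A}\times B)\subseteq\overline{p(A\times B)}=\overline{A\cdot B}\subseteq\overline{U}$.

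Now set $P=e\cdot B=p(e,B)$. As $e\in\overline{A}$, we get $P=\{e\}\cdot B\subseteq\overline{A}\cdot B\subseteq\overline{U}$, and since $e\in B$ and $p(e,e)=e$, we have $e\in P$. Finally, $f_{e}=p(e,\cdot)$ is a homeomorphism of $G$ onto $G$, so $P=f_{e}(B)$ is the homeomorphic image of the $G_{\delta}$-set $B$ and hence is itself a $G_{\delta}$-set. Thus $P$ is a $G_{\delta}$-set with $e\in P\subseteq\overline{U}$, which is exactly what the Moscow condition demands; combined with the reductions above, this shows $G$ is Moscow.

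The main point to watch is the asymmetry of rectifiable spaces: unlike in a topological group, the translate $e\cdot B$ need not coincide with $B$, so one cannot simply take $B$ itself as the witnessing $G_{\delta}$-set. The observation that $f_{e}=p(e,\cdot)$ is a homeomorphism fixing $e$ repairs this, guaranteeing that $e\cdot B$ is again a $G_{\delta}$-set containing $e$. Everything else reduces to continuity of $p$ together with the standard passage to regular open sets and to the point $e$, and in particular no appeal to associativity is required.
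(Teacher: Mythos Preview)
Your proof is correct and follows essentially the same route as the paper: obtain an $\omega$-deep set $A\subseteq U$ with the point in $\overline{A}$, take the witnessing $G_{\delta}$-set $B\ni e$ with $A\cdot B\subseteq U$, and use the left translate of $B$ as the required $G_{\delta}$-set inside $\overline{U}$, appealing to the fact that left translations are homeomorphisms. The only differences are expository: you spell out the reduction from arbitrary open sets to regular open sets (which the paper leaves implicit) and you normalize to $x=e$ via homogeneity, whereas the paper works at a general point $x$ and uses $x\cdot B$ directly.
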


\begin{proof}
Take any regular open subset $U$ of $G$, and any point
$x\in\overline{U}$. Since $t_{g}(G)\leq\omega$, there is an
$\omega$-deep subset $A$ of $U$ with $x\in \overline{A}$. Then we
can fix a $G_{\delta}$-subset $B$ of $G$ with $e\in B$ and $A\cdot
B\subset U$. For each $a\in G$, since $p(a, G)$ is a homeomorphism
map, we have $x\in x\cdot B\subset \overline{A\cdot B}\subset
\overline{U}$, and  $x\cdot B$ is a $G_{\delta}$-subset of $G$.
Thus, the space $G$ is Moscow.
\end{proof}

The following lemma is an easy exercise.

\begin{lemma}\label{l5}
The union of any countable family of $\omega$-deep subsets of $U$ is
an $\omega$-deep subset of $U$, for any set $U$ of a rectifiable
space $G$.
\end{lemma}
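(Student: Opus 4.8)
The plan is to prove that a countable union $A=\bigcup_{k\in\mathbb{N}}A_k$ of $\omega$-deep subsets of $U$ is again $\omega$-deep in $U$. By definition, for each $k\in\mathbb{N}$ there is a $G_\delta$-set $B_k$ in $G$ with $e\in B_k$ and $A_k\cdot B_k\subset U$. The natural candidate for the witnessing $G_\delta$-set for $A$ is the intersection $B=\bigcap_{k\in\mathbb{N}}B_k$. First I would check that $B$ is again a $G_\delta$-set: each $B_k$ is a countable intersection of open sets, so the countable intersection $B$ of the $B_k$ is a countable intersection of countably many open sets, hence still a countable intersection of open sets, i.e.\ a $G_\delta$-set. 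Clearly $e\in B$ since $e\in B_k$ for every $k$.

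Next I would verify the inclusion $A\cdot B\subset U$. Take any point of $A\cdot B$; it has the form $a\cdot b$ with $a\in A$ and $b\in B$. Since $a\in A=\bigcup_k A_k$, we have $a\in A_k$ for some $k$. Because $b\in B\subset B_k$, it follows that $a\cdot b\in A_k\cdot B_k\subset U$. Hence every element of $A\cdot B$ lies in $U$, so $A\cdot B\subset U$. This exhibits $B$ as a $G_\delta$-set containing $e$ with $A\cdot B\subset U$, which is exactly the condition for $A$ to be $\omega$-deep in $U$.

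This argument is essentially immediate once the two structural facts are pinned down: that a countable intersection of $G_\delta$-sets is a $G_\delta$-set, and that $A\cdot B=\bigcup_k(A\cdot B)$ respects the decomposition $A=\bigcup_k A_k$ with the single common shrinking factor $B$. There is no genuine obstacle here, which is why the statement is flagged as an easy exercise. The only point requiring mild care is the order of the multiplication: since the operation $p$ on a rectifiable space need not be associative or commutative, one must keep the left factor in $A$ and the right (shrinking) factor in $B$ throughout, matching the convention $A\cdot B=p(A,B)$ used in the definition of $\omega$-deep. Provided the inclusions are written respecting this left/right ordering, the proof goes through verbatim.
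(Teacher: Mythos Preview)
Your proof is correct and is precisely the natural argument the paper has in mind; the paper itself gives no proof, merely remarking that the lemma is an easy exercise. Your choice of $B=\bigcap_{k}B_k$ and the verification that $A\cdot B\subset U$ via the decomposition $A=\bigcup_k A_k$ is exactly the intended (and essentially unique) route.
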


\begin{lemma}\label{l6}
If $G$ is a rectifiable space of countable tightness, then the
$g$-tightness of $G$ is countable.
\end{lemma}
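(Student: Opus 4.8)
The plan is to derive the countability of the $g$-tightness directly from the countable tightness, using Lemma~\ref{l5} to handle the passage from single points to a countable set. First I would fix a regular open subset $U$ of $G$ and a point $x\in\overline{U}$. Since $U$ is in particular open and $G$ has countable tightness, there is a countable subset $A=\{a_{n}: n\in\mathbb{N}\}$ of $U$ with $x\in\overline{A}$. The goal is then to show that this very set $A$ is $\omega$-deep in $U$, which is exactly what the definition of countable $g$-tightness asks for.

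The key observation is that each singleton $\{a_{n}\}$ is already $\omega$-deep in $U$. Indeed, since $e$ is a right neutral element we have $a_{n}\cdot e=p(a_{n}, e)=a_{n}\in U$, and $p$ is continuous, so there is an open neighborhood $V_{n}$ of $e$ with $a_{n}\cdot V_{n}\subset U$. As $V_{n}$ is open, it is in particular a $G_{\delta}$-set containing $e$, so the pair $(\{a_{n}\}, V_{n})$ witnesses that $\{a_{n}\}$ is $\omega$-deep in $U$.

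Then I would invoke Lemma~\ref{l5}: the countable union $A=\bigcup_{n\in\mathbb{N}}\{a_{n}\}$ of $\omega$-deep subsets of $U$ is again $\omega$-deep in $U$. Combined with $x\in\overline{A}$, this produces an $\omega$-deep subset $A$ of $U$ with $x$ in its closure, so the condition defining countable $g$-tightness is verified and $t_{g}(G)\leq\omega$.

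I do not expect a genuine obstacle here: the argument simply reduces countable $g$-tightness to countable tightness, and the only nontrivial stability property, that $\omega$-deepness is preserved under countable unions, has already been isolated in Lemma~\ref{l5}. The single point to keep straight is that each $a_{n}$ lies in the open set $U$ together with the identity $p(a_{n}, e)=a_{n}$, which is precisely where the right-neutral element $e$ of the rectifiable structure is used to manufacture the witnessing $G_{\delta}$-set $V_{n}$.
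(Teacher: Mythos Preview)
Your proposal is correct and is exactly the argument the paper has in mind: the paper's proof consists of the single line ``It is easy to see by Lemma~\ref{l5},'' and you have simply written out what that line means, namely using countable tightness to produce a countable $A\subset U$ with $x\in\overline{A}$, observing each singleton is $\omega$-deep via an open neighborhood of $e$, and then applying Lemma~\ref{l5}.
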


\begin{proof}
It is easy to see by Lemma~\ref{l5}.
\end{proof}

\begin{lemma}\label{l7}
If $G$ is a rectifiable space of countable $o$-tightness\footnote{A
space $X$ is called has {\it countable o-tightness} if whenever a
point $a\in X$ belongs to the closure of $\cup\eta$, where $\eta$ is
any family of open subsets in $X$, there exists a countable
subfamily $\zeta$ of $\eta$ such that $a\in\overline{\cup\zeta}$.},
then the $g$-tightness of $G$ is countable. In particular, if $G$
has a countable cellularity, then $g$-tightness of $G$ is also
countable.
\end{lemma}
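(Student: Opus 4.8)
The plan is to show that countable $o$-tightness implies countable $g$-tightness by exploiting the fact that the rectification operation $p(a,G)$ is a homeomorphism, which lets us translate any ``deepening'' estimate at an arbitrary point back to the neutral element $e$. Fix a regular open subset $U$ of $G$ and a point $x\in\overline{U}$. The goal is to produce an $\omega$-deep subset $A\subset U$ with $x\in\overline{A}$; recall that $A$ is $\omega$-deep in $U$ means there is a $G_\delta$-set $B$ with $e\in B$ and $A\cdot B\subset U$.

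The key idea is to cover $x$ by a family of open sets each of which is ``deeply inside'' $U$ in a controlled way, and then apply the $o$-tightness hypothesis to extract a countable subfamily whose union still has $x$ in its closure. Concretely, I would consider, for each point $u\in U$, an open neighborhood $O_u$ of $u$ together with a $G_\delta$-set witnessing that $\{u\}$ (or $O_u$) can be translated to stay in $U$; the continuity of $p$ and the regularity of $G$ guarantee that around each $u\in U$ one can find an open set $O_u$ and a basic $G_\delta$-set $B_u\ni e$ with $O_u\cdot B_u\subset U$. Since $U$ is regular open and $x\in\overline{U}$, the point $x$ lies in the closure of the union $\bigcup_{u\in U}O_u$ of this family of open sets. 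Now countable $o$-tightness applies: there is a countable subfamily $\{O_{u_n}: n\in\mathbb{N}\}$ with $x\in\overline{\bigcup_n O_{u_n}}$. Each $O_{u_n}$ is $\omega$-deep in $U$ (via $B_{u_n}$), so by Lemma~\ref{l5} the countable union $A=\bigcup_n O_{u_n}$ is again $\omega$-deep in $U$, and it satisfies $x\in\overline{A}$. This witnesses $t_g(G)\leq\omega$.

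For the second assertion, I would note that countable cellularity forces countable $o$-tightness: if $a\in\overline{\bigcup\eta}$ for a family $\eta$ of open sets, one passes to a maximal disjoint subfamily of the $O\setminus\overline{\{a\}}$-type refinements; by the countable chain condition this disjoint refinement is countable, and its union is dense in $\bigcup\eta$, hence has $a$ in its closure. Thus countable cellularity $\Rightarrow$ countable $o$-tightness $\Rightarrow$ countable $g$-tightness, and the ``in particular'' clause follows immediately from the main part of the lemma.

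The main obstacle I anticipate is verifying cleanly that each open neighborhood $O_u$ can be made $\omega$-deep with a single $G_\delta$-set rather than merely each point $u$ being $\omega$-deep. The translation $p(u,\cdot)$ is a homeomorphism sending $e$ to $u$, so $u\cdot B\subset U$ for a $G_\delta$-set $B\ni e$ follows from $u\in U$ and $U$ open; the delicate point is shrinking to an open $O_u\ni u$ so that $O_u\cdot B\subset U$ still holds, which requires the joint continuity of $p$ at $(u,e)$ together with the openness of $U$. Once this local $\omega$-deepness is established, the rest is a routine combination of the $o$-tightness hypothesis with Lemma~\ref{l5}.
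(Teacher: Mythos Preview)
Your proposal is correct and follows essentially the same route as the paper: cover $U$ by $\omega$-deep open subsets, invoke countable $o$-tightness to extract a countable subfamily whose union still captures $x$ in its closure, then apply Lemma~\ref{l5}; the ``in particular'' clause is handled identically via the standard fact that $o$-tightness is bounded by cellularity. The obstacle you flag at the end is not actually delicate: joint continuity of $p$ at $(u,e)$ yields open $O_u\ni u$ and $V\ni e$ with $O_u\cdot V\subset U$, and any open set $V$ is trivially $G_\delta$, so each $O_u$ is $\omega$-deep immediately.
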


\begin{proof}
Let $U$ be a regular open subset of $G$, and assume that $x\in
\overline{U}$. Denote by $\zeta$ the family of all non-empty
$\omega$-deep open subsets of $U$. We have $U=\cup\zeta$, since
$G$ is a rectifiable space. Since the $o$-tightness of $G$ is
countable, there is a countable subfamily $\gamma\subset\zeta$ with
$x\in \overline{\cup\gamma}$. Then the family $\eta$ is countable,
and hence, the set $V=\cup\gamma$ is an $\omega$-deep subset of $U$
by Lemma~\ref{l5}. Hence, we have $t_{g}(G)\leq\omega$. Since the
$o$-tightness of a space $X$ is less than or equal to the
cellularity of $X$, whence the proof is complete.
\end{proof}

The next two lemmas are obvious.

\begin{lemma}\label{l8}
If $G$ is an extremelly disconnected rectifiable space\footnote{We
recall that a space $X$ is {\it extremally disconnected} if the
closure of any open subset of $X$ is open.}, then the $g$-tightness
of $G$ is countable.
\end{lemma}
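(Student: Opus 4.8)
The plan is to exploit the fact that extremal disconnectedness collapses every regular open set to a clopen set, after which the required $\omega$-deep witness can be produced trivially from a single point.

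First I would observe that if $G$ is extremally disconnected and $U$ is a regular open subset of $G$, then $\overline{U}$ is open; since $U=\mbox{int}(\overline{U})$ while $\overline{U}$ is already open, we get $U=\overline{U}$, so $U$ is clopen. In particular, every point $x\in\overline{U}$ actually lies in $U$, so the hypothesis $x\in\overline{U}$ in the definition of $g$-tightness is trivialized.

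Next, fix such a regular open $U$ and a point $x\in\overline{U}=U$. Using the identity $x\cdot e=p(x,q(x,x))=x\in U$ together with the continuity of the multiplication $p$ (equivalently, of the homeomorphism $f_{x}=p(x,\cdot)$), I would choose an open neighborhood $V$ of $e$ with $x\cdot V\subset U$. Since every open set is a $G_{\delta}$-set and $e\in V$, the set $B:=V$ witnesses that the singleton $A:=\{x\}$ is $\omega$-deep in $U$: indeed $A\cdot B=x\cdot V\subset U$ and $A=\{x\}\subset U$. Because $G$ is $T_{1}$, we have $x\in\overline{\{x\}}=\overline{A}$, so $A$ is an $\omega$-deep subset of $U$ whose closure contains $x$. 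As $U$ and $x$ were arbitrary, this verifies $t_{g}(G)\leq\omega$.

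The only genuine content here is the reduction of regular open sets to clopen sets via extremal disconnectedness; once that is in hand, no accumulation or covering argument is needed, and the singleton $\{x\}$ already serves as the required $\omega$-deep set. I therefore expect no real obstacle, which is precisely why the statement is labeled obvious: extremal disconnectedness removes the only nontrivial feature of the regular-open hypothesis appearing in the definition of $g$-tightness.
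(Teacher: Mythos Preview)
Your argument is correct. The paper gives no proof at all for this lemma---it is simply declared ``obvious'' (together with the subsequent lemma on countable pseudocharacter)---and your reduction is precisely the natural way to unpack that word: in an extremally disconnected space every regular open set is clopen, so $x\in\overline{U}$ forces $x\in U$, after which the singleton $A=\{x\}$ with $B=V$ an open (hence trivially $G_{\delta}$) neighbourhood of $e$ satisfying $x\cdot V\subset U$ witnesses $\omega$-depth. One cosmetic remark: you do not need $T_{1}$ for $x\in\overline{\{x\}}$; that inclusion holds in every space.
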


\begin{lemma}\label{l9}
If $G$ is rectifiable space of countable pseudocharacter, then the
$g$-tightness of $G$ is countable.
\end{lemma}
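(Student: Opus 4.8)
The plan is to unwind the two relevant definitions and observe that countable pseudocharacter makes the neutral element itself a $G_\delta$-set, after which the right-neutral identity does all the work. Recall that $t_g(G)\leq\omega$ means: for every regular open $U\subset G$ and every $x\in\overline{U}$ there is an $\omega$-deep subset $A$ of $U$ with $x\in\overline{A}$, where $A$ is $\omega$-deep in $U$ provided some $G_\delta$-set $B\ni e$ satisfies $A\cdot B\subset U$. So I need only exhibit, for an arbitrary regular open $U$ and $x\in\overline{U}$, one such $A$.

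First I would note that since $G$ has countable pseudocharacter, every singleton of $G$ is a $G_\delta$-set; in particular $\{e\}$ is a $G_\delta$-set containing $e$. The key elementary fact is that $e$ is a right neutral element of the rectifiable space, i.e. $u\cdot e=u$ for every $u\in G$ (this is exactly the identity $x\cdot e=x\cdot q(x,x)=x$ recorded after Theorem~\ref{t9}). Consequently, for any set $S\subset G$ we have $S\cdot\{e\}=\{u\cdot e:u\in S\}=S$.

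With these two observations in hand the proof is immediate. Given a regular open $U\subset G$ and a point $x\in\overline{U}$, take $A=U$ and $B=\{e\}$. Then $B$ is a $G_\delta$-set with $e\in B$, and $A\cdot B=U\cdot\{e\}=U\subset U$, so $A=U$ is $\omega$-deep in $U$. Since trivially $x\in\overline{U}=\overline{A}$, the defining condition for countable $g$-tightness is verified, and hence $t_g(G)\leq\omega$.

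There is essentially no obstacle here, which is why the lemma is labelled obvious: the only point that needs to be checked is the set-theoretic identity $U\cdot\{e\}=U$, and this is a direct consequence of $e$ being a right neutral element. Note that the argument does not use regularity of $U$ beyond what the definition of $g$-tightness already quantifies over, nor any separation or homogeneity hypotheses; countable pseudocharacter enters solely to guarantee that the single point $e$ is a $G_\delta$-set, which is the only $G_\delta$-set the proof ever invokes.
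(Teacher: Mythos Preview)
Your argument is correct and is exactly the ``obvious'' verification the paper has in mind: countable pseudocharacter makes $\{e\}$ a $G_\delta$-set, and then $U\cdot\{e\}=U$ by the right-neutral identity $x\cdot e=x$, so $U$ itself is $\omega$-deep in $U$. The paper gives no explicit proof (it merely declares the lemma obvious), and your write-up fills in precisely the intended one-line justification.
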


\begin{theorem}\label{t11}
Every dense subspace of a rectifiable space of countable
$g$-tightness is a Moscow space. In particular, if a rectifiable
space $G$ satisfies at least one of the following conditions, then
it is Moscow.
\begin{enumerate}
\item the space $G$ is a dense subspace of a
$\kappa$-Fr$\acute{e}$chet-Urysohn\footnote{A space is called {\it
$\kappa$-Fr$\acute{e}$chet-Urysohn} if for any $x\in \overline{U}$
with $U$ open in $X$, there exists some sequence of points of $U$
converges to $x$.}rectifiable space;

\item the
tightness of $G$ is countable;

\item the $o$-tightness of $G$ is countable;

\item the cellularity of $G$ is countable;

\item the pseudocharacter of $G$ is countable;

\item the space $G$ is extremally disconnected.
\end{enumerate}
\end{theorem}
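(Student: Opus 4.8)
The plan is to decouple the statement into two independent facts: that the ambient space is Moscow, and that the Moscow property is inherited by dense subspaces. For the general claim, let $H$ be a rectifiable space of countable $g$-tightness and let $G$ be dense in $H$. By Theorem~\ref{t6}, $H$ is a Moscow space, so it suffices to prove that a dense subspace of a Moscow space is again Moscow; the rectifiability of $H$ enters only through Theorem~\ref{t6}, while $G$ itself need not be rectifiable.

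To prove the heredity, I would take an arbitrary open set $V$ of $G$ and a point $x\in\mathrm{cl}_{G}(V)$, and exhibit a $G_{\delta}$-set of $G$ lying between $x$ and $\mathrm{cl}_{G}(V)$. Writing $V=W\cap G$ for some open $W$ of $H$, the density of $G$ forces $G\cap W$ to be dense in $W$, hence $\mathrm{cl}_{H}(V)=\mathrm{cl}_{H}(W)$. Consequently $\mathrm{cl}_{G}(V)=\mathrm{cl}_{H}(W)\cap G$ is the trace on $G$ of the closure of an open set of $H$. Since $H$ is Moscow, there is a $G_{\delta}$-set $P$ of $H$ with $x\in P\subset\mathrm{cl}_{H}(W)$; then $P\cap G$ is a $G_{\delta}$-set of $G$ with $x\in P\cap G\subset\mathrm{cl}_{G}(V)$. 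As $x$ was arbitrary, $\mathrm{cl}_{G}(V)$ is a union of $G_{\delta}$-sets of $G$, so $G$ is Moscow.

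For the ``in particular'' list, each item is reduced to countable $g$-tightness so that either Theorem~\ref{t6} (applied to $G$ itself) or the dense-subspace result just established applies. Items (2)--(6) are immediate from Lemmas~\ref{l6}, \ref{l7} (using its cellularity clause for (4)), \ref{l8}, and \ref{l9} respectively, each of which gives $t_{g}(G)\le\omega$, whence $G$ is Moscow by Theorem~\ref{t6}. For (1), the only extra step is to note that a $\kappa$-Fr\'echet-Urysohn space has countable $o$-tightness: if $a\in\overline{\bigcup\eta}$ for a family $\eta$ of open sets, then $a$ lies in the closure of the open set $U=\bigcup\eta$, so some sequence from $U$ converges to $a$; picking one member of $\eta$ through each term yields a countable $\zeta\subset\eta$ with $a\in\overline{\bigcup\zeta}$. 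Thus the ambient $\kappa$-Fr\'echet-Urysohn rectifiable space has countable $g$-tightness by Lemma~\ref{l7}, and $G$, being dense in it, is Moscow by the general statement.

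The only genuinely delicate point is the dense-subspace heredity of the Moscow property; the rest is bookkeeping across the earlier lemmas. Within that heredity argument, the step to verify carefully is that the chosen $W$ is immaterial up to closure, i.e.\ that density forces $\mathrm{cl}_{H}(W\cap G)=\mathrm{cl}_{H}(W)$, since this is precisely what permits importing the $G_{\delta}$-decomposition of $H$ back down to $G$.
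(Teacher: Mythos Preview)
Your proposal is correct and follows the same overall strategy as the paper: reduce each hypothesis to countable $g$-tightness and invoke Theorem~\ref{t6}, together with the (well-known) heredity of the Moscow property to dense subspaces. Your write-up is in fact more complete than the paper's: the paper's proof is a two-line pointer to the lemmas and never spells out the dense-subspace heredity argument that you give in full, nor does it explicitly treat item~(2). The one small divergence is item~(1): the paper cites Lemmas~\ref{l5} and~\ref{l6} (essentially running the proof of Lemma~\ref{l6} with the $\kappa$-Fr\'echet--Urysohn property replacing countable tightness on open sets), whereas you pass through countable $o$-tightness and Lemma~\ref{l7}. Both routes are valid and equally short; yours has the minor advantage of making the reduction to an already-stated lemma completely explicit.
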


\begin{proof}
It follows from Lemma~\ref{l5} and~\ref{l6} that (1) holds. It
is easy to see that (3), (4), (5) and (6) follow from
Theorem~\ref{t6} and Lemma~\ref{l7},~\ref{l8} and~\ref{l9}.
\end{proof}

\begin{lemma}\label{l13}
Let $G$ be a rectifiable space, $U$ a subset of $G$, and $b$ an
element of $G$. If there exists a countable $\omega$-deep subsets
$\gamma$ of $U$ such that $b\in\overline{\cup\gamma}$, then there is
a closed $G_{\delta}$-subset $P$ of $G$ such that $b\in P\subset
\overline{U}$.
\end{lemma}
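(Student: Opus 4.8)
The plan is to collapse the countable family $\gamma$ into a single $\omega$-deep set, shrink the associated $G_{\delta}$-witness to a \emph{closed} $G_{\delta}$-neighbourhood of $e$, and then transport it to $b$ via the homeomorphism $f_{b}$.

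First I would invoke Lemma~\ref{l5} to replace $\gamma$ by the single set $A=\bigcup\gamma$, which is again $\omega$-deep in $U$ and satisfies $b\in\overline{A}$. By the definition of $\omega$-deepness there is a $G_{\delta}$-set $B^{\prime}$ in $G$ with $e\in B^{\prime}$ and $A\cdot B^{\prime}\subset U$; write $B^{\prime}=\bigcap_{n}O_{n}$ with each $O_{n}$ open and containing $e$.

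The main step is to extract from $B^{\prime}$ a closed $G_{\delta}$-set. Using regularity of $G$, I would choose open sets $V_{n}$ with $e\in V_{n}$, $\overline{V_{n}}\subset O_{n}$, and $\overline{V_{n+1}}\subset V_{n}$ for every $n$, and set $B=\bigcap_{n}V_{n}$. The nesting forces $B=\bigcap_{n}\overline{V_{n}}$, so $B$ is simultaneously a $G_{\delta}$-set and closed, with $e\in B\subset B^{\prime}$; monotonicity of the multiplication then gives $A\cdot B\subset A\cdot B^{\prime}\subset U$. This passage from a merely $G_{\delta}$ witness to a closed one is the only genuine obstacle, and it is precisely what separates this lemma from Theorem~\ref{t6}.

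Finally I would put $P=b\cdot B$. Since $f_{b}=p(b,\cdot)$ is a homeomorphism of $G$ onto itself and $B$ is a closed $G_{\delta}$-set, $P=f_{b}(B)$ is again a closed $G_{\delta}$-set; moreover $e\in B$ together with $b\cdot e=b$ gives $b\in P$. To see $P\subset\overline{U}$, fix $x\in B$: the map $p(\cdot,x)$ is continuous, so $p(\overline{A},x)\subset\overline{p(A,x)}$, and as $x\in B$ we have $p(A,x)\subset A\cdot B\subset U$, whence $p(\overline{A},x)\subset\overline{U}$. Applying this to $b\in\overline{A}$ yields $b\cdot x=p(b,x)\in\overline{U}$ for every $x\in B$, that is $P=b\cdot B\subset\overline{U}$. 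Thus $P$ is the required closed $G_{\delta}$-set with $b\in P\subset\overline{U}$.
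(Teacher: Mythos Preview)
Your argument is correct, and it is in fact cleaner than the paper's own proof. The paper first reduces to $b=e$, then builds a nested sequence of closed $G_{\delta}$-sets $V_{n}$ satisfying in addition the multiplicative conditions $V_{n+1}\cdot V_{n+1}\subset V_{n}$ and $q(V_{n+1},V_{n+1})\subset V_{n}$; these are needed so that, invoking Claim~7 of Theorem~\ref{t5}, one can conclude $e\cdot P=P$ (recall $e$ is only a \emph{right} neutral element, so this is not automatic). Only then does the paper pass, via continuity of $p$, from $e\in\overline{\cup\gamma}$ to $e\cdot P\subset\overline{\cup F_{n}\cdot V_{n}}\subset\overline{U}$.

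You bypass all of this by not reducing to $b=e$ and instead \emph{defining} $P=b\cdot B$ as the left translate of the closed $G_{\delta}$-set $B$ you constructed by plain regularity. Since $f_{b}$ is a homeomorphism, $P$ is automatically closed $G_{\delta}$, and since $b\cdot e=b$ you get $b\in P$ for free; the inclusion $P\subset\overline{U}$ then follows from the continuity of the right-translation maps $p(\cdot,x)$ exactly as you wrote. The upshot is that the multiplicative nesting conditions and the appeal to Claim~7 are unnecessary for this lemma: your route shows that only regularity and the homeomorphism property of $f_{b}$ are really at work.
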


\begin{proof}
Without loss of generality, we may assume that $b=e$. Also, we
denote $\gamma$ by $\{F_{n}: n\in\mathbb{N}\}$. For each $n\in
\mathbb{N}$, it is easy to see that we can choose a closed
$G_{\delta}$-subset $V_{n}$ such that $e\in V_{n}$, $F_{n}\cdot
V_{n}\subset U$, $V_{n+1}\cdot V_{n+1}\subset V_{n}$ and $q(V_{n+1},
V_{n+1})\subset V_{n}$. Now let $P=\cap\{V_{n}: n\in \mathbb{N}\}$.
Obviously, the set $P$ is a closed $G_{\delta}$-subset of $G$. By the proof
of Claim 7 in Theorem~\ref{t5}, we also have $P=e\cdot P$.
Since $e\in\overline{\cup\gamma}$, it follows that
$$P=e\cdot P\subset \overline{\cup\{F_{n}\cdot P: n\in \mathbb{N}\}}\subset
\overline{\cup\{F_{n}\cdot V_{n}: n\in \mathbb{N}\}}\subset
\overline{U}.$$
\end{proof}

By Lemma~\ref{l13}, we have the following theorem.

\begin{theorem}
Let $G$ be a rectifiable space. For each open subset $U$ and each
point $b\in \overline{U}$, if there exists a countable $\omega$-deep
subsets $\gamma$ of $U$ such that $b\in\overline{\cup\gamma}$, then
$G$ is a Moscow space.
\end{theorem}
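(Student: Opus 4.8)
The plan is to show that the hypotheses of this theorem are exactly what is needed to invoke Lemma~\ref{l13} at every relevant point and conclude the Moscow property directly from its definition. Recall that $G$ is \emph{Moscow} if for each open subset $U$ of $G$, the closure $\overline{U}$ is a union of $G_{\delta}$-sets; equivalently, for every $x\in\overline{U}$ there is a $G_{\delta}$-set $P$ with $x\in P\subset\overline{U}$. So the theorem is essentially asserting that the pointwise hypothesis ``there is a countable $\omega$-deep family $\gamma$ of $U$ with $b\in\overline{\cup\gamma}$'' certifies, at each point $b\in\overline{U}$, the existence of precisely such a $G_{\delta}$-set.

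The proof I would write is short. First I would fix an arbitrary open subset $U$ of $G$ and an arbitrary point $b\in\overline{U}$. By hypothesis there is a countable $\omega$-deep family $\gamma$ of $U$ with $b\in\overline{\cup\gamma}$. This is verbatim the hypothesis of Lemma~\ref{l13}, so that lemma yields a closed $G_{\delta}$-subset $P$ of $G$ with $b\in P\subset\overline{U}$. Since $b\in\overline{U}$ was arbitrary, every point of $\overline{U}$ lies in a $G_{\delta}$-subset of $\overline{U}$, which is exactly the definition of the Moscow property; since $U$ was an arbitrary open set, $G$ is Moscow. In the write-up I would state this essentially as ``Let $U$ be open and $b\in\overline{U}$; by hypothesis and Lemma~\ref{l13} there is a $G_{\delta}$-set $P$ with $b\in P\subset\overline{U}$, so $G$ is Moscow.''

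There is no genuine obstacle here, since all the real work has already been isolated in Lemma~\ref{l13}: that lemma is where one builds the nested sequence of closed $G_{\delta}$ neighborhoods $V_{n}$ of $e$ satisfying $F_{n}\cdot V_{n}\subset U$, $V_{n+1}\cdot V_{n+1}\subset V_{n}$ and $q(V_{n+1},V_{n+1})\subset V_{n}$, sets $P=\bigcap_n V_n$, and uses the Claim~7 computation $e\cdot P=P$ together with $e\in\overline{\cup\gamma}$ to show $P\subset\overline{U}$. The only point requiring any care in the present theorem is the reduction to $b=e$: one should observe that because $G$ is homogeneous (indeed, each left translation $p(a,G)$ is a homeomorphism), the definition of Moscow and the $G_{\delta}$ conclusion are translation-invariant, so it is harmless that Lemma~\ref{l13} is phrased with the normalization $b=e$. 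Thus the entire theorem is an immediate corollary of Lemma~\ref{l13} quantified over all open $U$ and all $b\in\overline{U}$, and the proof amounts to recording that the pointwise $G_{\delta}$-sets supplied by the lemma assemble into the family of $G_{\delta}$-sets covering $\overline{U}$ demanded by the definition.
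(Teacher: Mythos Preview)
Your proposal is correct and matches the paper's approach exactly: the paper simply records this theorem as an immediate consequence of Lemma~\ref{l13}, with no separate proof written out. Your observation that the normalization $b=e$ is already handled inside Lemma~\ref{l13} (so no additional homogeneity argument is needed here) is accurate, and the rest is just unpacking the definition of Moscow, precisely as you describe.
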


\begin{theorem}
Let $G$ be a sequential rectifiable space such that
$G\setminus\{e\}$ is normal. Then $G$ has countable pseudocharacter.
\end{theorem}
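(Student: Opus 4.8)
The plan is to use homogeneity to reduce the statement to showing that the singleton $\{e\}$ is a $G_{\delta}$-set in $G$, and then to argue by contradiction. If $e$ is isolated there is nothing to prove, so I may assume $G$ is non-discrete; being $T_1$ and sequential, $G$ then carries a nontrivial sequence converging to $e$. The key structural observation is that $Y=G\setminus\{e\}$ is open in $G$, hence itself sequential, and normal by hypothesis. Suppose, toward a contradiction, that $\{e\}$ is not a $G_{\delta}$-set; equivalently, no countable family of open neighborhoods of $e$ has intersection $\{e\}$.

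First I would fix a sequence $\{x_{m}\}\subset G\setminus\{e\}$ with $x_{m}\to e$ together with pairwise disjoint closed neighborhoods $U_{m}$ of $x_{m}$ satisfying $\overline{U_{i}}\cap\overline{U_{j}}=\emptyset$ for $i\neq j$, exactly as in the proof of Theorem~\ref{t12}. Using that $\{e\}$ is not a $G_{\delta}$-set and that $G$ is sequential, I would then build by transfinite recursion of length $\omega_{1}$ a family $\{S_{\alpha}:\alpha<\omega_{1}\}$ of nontrivial sequences converging to $e$ that is \emph{independent}, in the sense that no countable subfamily suffices to witness $e$ as a $G_{\delta}$-point; at each stage the failure of the $G_{\delta}$ condition supplies a new sequence escaping the neighborhoods already chosen. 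Translating each $S_{\alpha}$ by the multiplication, i.e.\ forming $x_{m}\cdot S_{\alpha}$, which converges to $x_{m}$ and is eventually contained in $U_{m}$ by the same product computation used in the proof of Theorem~\ref{t12}, produces an $\omega_{1}\times\omega$ grid of convergent sequences whose row-limits $x_{m}$ in turn converge to $e$.

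From this grid I would extract two disjoint subsets $H,K\subset Y$, a vertical set built from the apex sequence $\{x_{m}\}$ and a horizontal set built from one point of each row $S_{\alpha}$, each of which is closed in $Y$ because, by the disjointness of the $U_{m}$, regularity, and the discreteness of the rows, its only possible accumulation point in $G$ is $e\notin Y$. Here the homeomorphisms $p(x,G)$ and $q(x,G)$ of Theorem~\ref{t9}, together with homogeneity, are used to keep all sequential limits under control, as in the product computations of Section 4. The configuration is arranged so that $e$ is a common limit of $H$ and of $K$ in $G$; consequently every pair of open sets of $Y$ containing $H$ and $K$ respectively must both accumulate at $e$ and hence meet. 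This shows that $H$ and $K$ cannot be separated by disjoint open sets in $Y$, contradicting the normality of $Y$. Therefore $\{e\}$ is a $G_{\delta}$-set, and by homogeneity $G$ has countable pseudocharacter.

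The hard part will be the recursive construction, and within it the inseparability of $H$ and $K$: one must ensure simultaneously that the $\omega_{1}$ rows remain discrete and closed away from $e$, that the uncountability of the pseudocharacter is genuinely consumed at every stage so that the two limiting sets both cling to $e$, and that the translations by $x_{m}$ create no spurious limit points inside $Y$. Converting the independence extracted from the failure of the $G_{\delta}$ property into the impossibility of an open separation in $Y$ is precisely the step where normality is contradicted, and it is where the rectifiable structure, namely the continuity of $p$ and $q$ and the identities of Theorem~\ref{t9}, must be exploited most carefully.
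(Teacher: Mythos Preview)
Your proposal has a genuine gap at the step you yourself flag as the hard part, and the paper's argument avoids it by a completely different, much shorter route.

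The gap is the implication ``both accumulate at $e$, hence meet.'' Two disjoint open subsets of $Y=G\setminus\{e\}$ can perfectly well have $e$ in their closures in $G$; already on the real line the odd and even terms of $\{1/n\}$ sit in disjoint open subsets of $\mathbb{R}\setminus\{0\}$ while both accumulate at $0$. Nothing in your transfinite $\omega_{1}$ grid forces the open neighborhoods $U\supset H$ and $V\supset K$ in $Y$ to intersect: the uncountability of $K$ controls the \emph{size} of $V$ but says nothing about $V$ meeting $U$. The rectifiable identities you invoke let you move sequences around and control their limits, but they do not by themselves produce an inseparability statement inside $Y$. So the contradiction with normality is not established, and I do not see how to close this gap along the lines you sketch.

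The paper's proof needs no $\omega_{1}$ recursion and no grid. It takes a single sequence $\{x_{n}\}\to e$ in $G\setminus\{e\}$, splits it into $A=\{x_{2n-1}\}$ and $B=\{x_{2n}\}$, and uses normality of $Y$ to get a continuous $f:Y\to\mathbb{R}$ with $f\equiv 1$ on $A$ and $f\equiv 0$ on $B$. The point is then not to argue about open separations directly, but to invoke the Moscow machinery of Section~5: a sequential space has countable tightness, so by Theorem~\ref{t11}(2) the rectifiable space $G$ is Moscow; since $\{e\}$ is assumed not to be a $G_{\delta}$-set, $Y$ is $G_{\delta}$-dense in $G$, and Lemma~\ref{l12} then gives that $Y$ is $C$-embedded in $G$. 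Hence $f$ extends continuously to $e$, which is impossible because $A$ and $B$ both converge to $e$ with $f$-values $1$ and $0$. That is the whole proof.

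In short, the missing idea in your plan is precisely the Moscow/$C$-embedding step: it is what converts ``two closed sets in $Y$ both cluster at $e$'' into an honest obstruction, and it replaces the unsubstantiated inseparability claim your construction was aiming for.
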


\begin{proof}
We may assume that $e$ is a non-isolated point in $G$. Since $G$ is
homogeneous, we only need to show that $e$ is a $G_{\delta}$-point.
Suppose that $e$ is a non-$G_{\delta}$-point in $G$. Obviously,
we have $G\setminus\{e\}$ is a non-sequentially closed subset, and hence
there exists a sequence $\{x_{n}: n\in \mathbb{N}\}\subset
G\setminus\{e\}$ converging to $e$. Put $A=\{x_{2n-1}: n\in
\mathbb{N}\}$ and $B=\{x_{2n}: n\in \mathbb{N}\}$. Clearly, we may
assume that $A$ and $B$ are disjoint. Obviously, the sets $A$ and $B$ are
closed in $G\setminus\{e\}$, and since $G\setminus\{e\}$ is normal,
there exists a continuous function $f$ on $G\setminus\{e\}$ such
that $f(x)=1$, for each $x\in A$, and $f(x)=0$, for each $x\in B$.
Therefore, it is impossible to extend this function continuously to
the point $e$. However, since $G$ is sequential, the space $G$ is Moscow
by (2) in Theorem~\ref{t11}, and hence $G\setminus\{e\}$ is
$C$-embedded in $G$ by Lemma~\ref{l12}, which is a contradiction.
\end{proof}

\bigskip

\section{Remainders of $k$-gentle paratopological groups or rectifiable spaces}
In this section, we assume that all spaces are Tychonoff.

Let $f: X\rightarrow Y$ be a map. The map $f$ is called {\it k-gentle} if
for each compact subset $F$ of $X$ the image $f(F)$ is also compact.
A paratopological group $G$ is called {\it k-gentle} \cite{A2009} if
the inverse map $x\mapsto x^{-1}$ is $k$-gentle.

\begin{lemma}\cite{A2009}\label{l0}
Suppose that $G$ is a $k$-gentle paratopological group. Then any
remainder of $G$ in a compactification $bG$ of $G$ is either
pseudocompact or Lindel$\ddot{\mbox{o}}$f.
\end{lemma}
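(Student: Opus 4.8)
\emph{Plan.} I would follow the pattern of Arhangel'skii's pseudocompact--Lindel\"{o}f dichotomy for remainders of topological groups, using the hypothesis of $k$-gentleness to replace continuity of inversion at the one point where the latter is needed. First I would dispose of the locally compact case. Since multiplication is jointly continuous and translations are bijections, both left and right translations of $G$ are homeomorphisms, so $G$ is homogeneous; hence $G$ is either locally compact or nowhere locally compact. If $G$ is locally compact it is open in every Hausdorff compactification $bG$, so the remainder $Y=bG\setminus G$ is closed in the compact space $bG$, hence compact, and therefore both pseudocompact and Lindel\"{o}f. Thus I may assume $G$ is nowhere locally compact; then $G$ has empty interior in $bG$, so the remainder $Y$ is dense in $bG$.

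The plan is to reduce everything to the theorem of Henriksen and Isbell that a remainder $bG\setminus G$ is Lindel\"{o}f if and only if $G$ is of countable type (every compact subset of $G$ lies in a compact subset of $G$ with a countable base of neighborhoods). Granting this, the asserted dichotomy is equivalent to the implication: if $G$ is not of countable type, then $Y$ is pseudocompact. I would establish it in contrapositive form, by showing that a non-pseudocompact remainder forces $G$ to be of countable type.

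So suppose $Y$ is not pseudocompact. The heart of the argument is to manufacture, inside $G$, a nonempty compact set $P$ having a countable base of neighborhoods in $G$, equivalently a nonempty compact $G_\delta$-subset of $bG$ contained in $G$. Starting from an unbounded continuous function on $Y$, equivalently from a countable closed discrete subset of $Y$, one examines its cluster set in $bG$, which lands in $bG\setminus Y=G$; turning this closed cluster set into a genuine $G_\delta$-set of $bG$ and relocating it into $G$ is where the algebra is needed, and it is here that the construction passes through the inverse map $x\mapsto x^{-1}$ applied to a compact set. In a topological group one would simply invoke continuity of inversion; in a paratopological group inversion need not be continuous, but the $k$-gentleness hypothesis guarantees that the image $F^{-1}$ of a compact set $F$ is again compact, which is exactly what keeps the relevant sets compact. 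Once such a $P$ is in hand, I translate it, using that translations are homeomorphisms, so that $e\in P$. For an arbitrary compact $F\subseteq G$, the set $F\cdot P$ is compact, being the image of $F\times P$ under the continuous multiplication, and $F=F\cdot e\subseteq F\cdot P$. If $\{U_n:n\in\mathbb{N}\}$ is a decreasing neighborhood base of $P$, then each $F\cdot U_n=\bigcup_{f\in F}f\cdot U_n$ is open (a union of homeomorphic translates of $U_n$), and a routine compactness argument using joint continuity shows that $\{F\cdot U_n:n\in\mathbb{N}\}$ is a neighborhood base of $F\cdot P$. Hence every compact subset of $G$ lies in a compact set of countable character, i.e.\ $G$ is of countable type, contradicting our assumption.

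By the Henriksen--Isbell theorem, $G$ of countable type yields $Y$ Lindel\"{o}f, completing the dichotomy. The main obstacle I anticipate is the construction in the previous paragraph of a bona fide compact $G_\delta$-set of $bG$ lying in $G$ out of the mere failure of pseudocompactness of $Y$: passing from a closed cluster set to a $G_\delta$-set for an \emph{arbitrary} compactification, rather than just $\beta G$, is delicate, and this is precisely the step that exploits the group operations together with the $k$-gentleness of the inverse. The subsequent bootstrap to countable type, by contrast, uses only joint continuity of the product and the fact that translations are homeomorphisms.
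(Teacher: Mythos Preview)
The paper does not prove this lemma; it is stated with the citation \cite{A2009} (Arhangel'ski\u{\i}--Choban) and invoked as an established result, so there is no in-paper proof to compare against.

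Your outline is the right roadmap and matches the approach in the cited source: dispose of the locally compact case by homogeneity, reduce via Henriksen--Isbell to showing that a non-pseudocompact remainder forces $G$ to be of countable type, and isolate the single spot where $k$-gentleness replaces continuity of inversion (namely, keeping $F^{-1}$ compact when $F$ is). The bootstrap at the end---translating a compact $G_\delta$-set $P$ to contain $e$ and then showing $F\cdot P$ has countable character for arbitrary compact $F$---is correct and standard.

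However, your own closing paragraph concedes that the heart of the argument is not actually carried out: you describe in general terms that one must produce, from the failure of pseudocompactness of $Y$, a nonempty compact $G_\delta$-subset of $bG$ lying inside $G$, and you flag this as ``the main obstacle I anticipate.'' That step is essentially the entire theorem; everything else is routine. As written, then, this is a correct strategic plan but not a proof. To complete it you would need to execute the construction explicitly: take a countable discrete-in-$Y$ family (or a locally finite countable family of nonempty open sets in $Y$ witnessing non-pseudocompactness), pass to closures in $bG$, and use the group operations together with $k$-gentleness to manufacture the required compact $G_\delta$-set in $G$. Until that is written out, the proposal has a genuine gap at precisely the place you yourself identify.
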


\begin{lemma}\cite{A2009}\label{l1}
Let $G$ be a $k$-gentle paratopological group such that some
remainder of $G$ is Lindel$\ddot{\mbox{o}}$f. Then $G$ is a
topological group.
\end{lemma}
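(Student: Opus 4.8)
The plan is to reduce the statement to the continuity of the inverse map and then to feed the Lindel\"of remainder into the classical theory of spaces of countable type. First I would invoke the standard reduction for paratopological groups: it suffices to prove that inversion is continuous at the neutral element $e$. Indeed, if $x_{\alpha}\to x$, then joint continuity of the multiplication gives $x^{-1}x_{\alpha}\to x^{-1}x=e$; continuity of inversion at $e$ then yields $(x^{-1}x_{\alpha})^{-1}=x_{\alpha}^{-1}x\to e$, and multiplying on the right by $x^{-1}$ gives $x_{\alpha}^{-1}\to x^{-1}$. So the entire problem is local at $e$, and the task becomes: for every neighbourhood $V$ of $e$ there is a neighbourhood $W$ of $e$ with $W^{-1}\subset V$.

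Next I would exploit the remainder. Since the property of possessing a Lindel\"of remainder is independent of the chosen compactification (the canonical maps between compactifications being perfect on remainders, so that Lindel\"ofness transfers), I may assume $\beta G\setminus G$ is Lindel\"of. By the Henriksen--Isbell theorem this is equivalent to $G$ being of countable type, so there is a compact set $K$ with $e\in K$ possessing a countable decreasing base $\{U_{n}:n\in\mathbb{N}\}$ of open neighbourhoods in $G$ (each $U_{n}$ is then a neighbourhood of $e$). This is exactly where the hypothesis on the remainder is converted into usable internal structure, and where $k$-gentleness first enters: since $K$ is compact, $K^{-1}$ is compact as well.

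The core step is to derive continuity of inversion at $e$ from this featheredness together with $k$-gentleness. Suppose it fails: fix an open $V\ni e$ with $U_{n}^{-1}\not\subset V$ for every $n$, and choose $w_{n}\in U_{n}$ with $w_{n}^{-1}\notin V$. Because $\{U_{n}\}$ is a base at $K$, the sequence $\{w_{n}\}$ is eventually inside every neighbourhood of $K$, so (using regularity) its only cluster points lie in $K$ and $F=K\cup\{w_{n}:n\in\mathbb{N}\}$ is compact; by $k$-gentleness $F^{-1}$ is compact, so the points $w_{n}^{-1}$ have a cluster point $z\in F^{-1}$, necessarily lying in the closed set $G\setminus V$. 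Passing to a subnet along which simultaneously $w_{n}\to k\in K$ and $w_{n}^{-1}\to z$, joint continuity of the multiplication applied to the constant relation $w_{n}w_{n}^{-1}=e$ forces $kz=e$, i.e. $z=k^{-1}\in K^{-1}$.

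The main obstacle is precisely here: the computation only pins $z$ down to $k^{-1}$ for some $k\in K$, which already gives a contradiction when $K=\{e\}$ (the metrizable case, since then $z=e\in V$) but not in general. Closing this gap is where the real work lies, and where I expect to need the full strength of the hypotheses rather than mere countable type: one must localise the failure near $e$ within the compact fibre $K$, using regularity and the group operations to replace $K$ by a controlled smaller compact piece, or else route the non-metrizable case through a known automatic-continuity theorem for feathered (almost metrizable) paratopological groups of Ellis--Brand type, which guarantees that continuity of the operations upgrades to continuity of inversion on such groups. Either way, once inversion is shown continuous at $e$, the reduction in the first paragraph completes the proof that $G$ is a topological group.
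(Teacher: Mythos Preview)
The paper does not prove this lemma at all; it is quoted from \cite{A2009} and used as a black box throughout Section~6. So there is no in-paper proof to compare against, and your proposal must stand on its own.

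Your framework is sound: the reduction to continuity of inversion at $e$ is standard and correct, and routing the Lindel\"of remainder through Henriksen--Isbell (Theorem~\ref{t14}) to obtain countable type is the right first move. You also locate the gap honestly: from a subnet with $w_{n}\to k\in K$ and $w_{n}^{-1}\to z$ you only get $z=k^{-1}$, and nothing so far forces $k^{-1}\in V$. That gap is genuine, and your two proposed escapes --- ``shrink $K$'' or ``invoke an Ellis/Bouziad-type automatic-continuity theorem'' --- are either vague or would make the $k$-gentle hypothesis idle.

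The gap closes internally once you extract one more consequence of $k$-gentleness: inversion is continuous on every compact set. Indeed, for compact $C$ the set $C^{-1}$ is compact; the graph $\Gamma=\{(x,x^{-1}):x\in C\}$ is closed in the compact space $C\times C^{-1}$ because if $(x_{\alpha},x_{\alpha}^{-1})\to(a,b)$ then joint continuity of multiplication gives $ab=\lim x_{\alpha}x_{\alpha}^{-1}=e$, so $b=a^{-1}$; hence $\Gamma$ is compact and $\pi_{1}:\Gamma\to C$ is a continuous bijection of compact Hausdorff spaces, i.e.\ a homeomorphism, so $i|_{C}=\pi_{2}\circ\pi_{1}^{-1}$ is continuous. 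Now, given open $V\ni e$, continuity of $i|_{K}$ at $e$ plus regularity yields an open $O\ni e$ with $(\overline{O}\cap K)^{-1}\subset V$. Rerun your argument choosing $w_{n}\in U_{n}\cap O$ with $w_{n}^{-1}\notin V$ (possible under the contradiction hypothesis, since $U_{n}\cap O$ is still a neighbourhood of $e$). Any cluster point $k$ of $\{w_{n}\}$ lies in $\overline{O}\cap K$, so $k^{-1}\in V$; but continuity of $i$ on the compact set $K\cup\{w_{n}:n\in\mathbb{N}\}$ forces $\{w_{n}^{-1}\}$ to cluster at $k^{-1}$, contradicting $w_{n}^{-1}\notin V$. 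Thus some $W=U_{n}\cap O$ satisfies $W^{-1}\subset V$, and inversion is continuous at $e$.
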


\begin{theorem}({\bf Henriksen and Isbell} \cite{H1958})\label{t14}  A space
$X$ is of countable type if and only if its remainder in any (in
some) compactification of $X$ is Lindel\"{o}f.
\end{theorem}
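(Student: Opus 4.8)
The plan is to prove the full three-way equivalence behind this statement: (a) $X$ is of countable type, meaning every compact subset of $X$ is contained in a compact set having countable character in $X$; (b) the remainder $bX\setminus X$ is Lindel\"{o}f in \emph{every} compactification; and (c) it is Lindel\"{o}f in \emph{some} compactification. The implication (b)$\Rightarrow$(c) is trivial (Tychonoff spaces have compactifications), so the real content is (a)$\Rightarrow$(b) and (c)$\Rightarrow$(a). Both will run through a single bridging lemma that translates ``countable character'' into the language of the compactification.

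\medskip
\noindent\emph{Key Lemma.} For a compactification $bX$ and a compact $K\subseteq X$, the condition $\chi(K,X)\le\omega$ is equivalent to $K$ being a compact $G_{\delta}$-set of $bX$ lying inside $X$, i.e.\ $K=\bigcap_{n}\overline{U_{n}}^{bX}$ for a decreasing sequence of $bX$-open sets $U_{n}\supseteq K$ with $\overline{U_{n+1}}^{bX}\subseteq U_{n}$. For the forward direction I would, given such $U_{n}$, verify that $\{U_{n}\cap X\}$ is a neighborhood base of $K$ in $X$: if $V'$ is $bX$-open with $K\subseteq V'$, the decreasing compacta $\overline{U_{n}}^{bX}\setminus V'$ have empty intersection, so one of them is empty and $U_{n}\cap X\subseteq V'\cap X$. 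For the converse I start from a decreasing base $\{O_{n}\}$ of $K$ in $X$ with $\overline{O_{n+1}}^{X}\subseteq O_{n}$ and $\bigcap_{n}O_{n}=K$, extend each $O_n$ to a $bX$-open $G_{n}$ with $G_{n}\cap X=O_{n}$, and use normality of $bX$ to interpolate $K\subseteq U_{n}\subseteq\overline{U_{n}}^{bX}\subseteq G_{n}$.

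\medskip
The delicate point, which I expect to be the main obstacle, is showing that the resulting $P=\bigcap_{n}\overline{U_{n}}^{bX}$ does not ``protrude'' into the remainder, i.e.\ $P\cap(bX\setminus X)=\emptyset$; a naive closure argument fails here. The fix combines two facts. First, since $X$ is dense and each $U_{n}$ is open, one has $\overline{U_{n}}^{bX}=\overline{U_{n}\cap X}^{bX}$. Second, if $p$ were a remainder point of $P$, then Tychonoff separation of the compact $K$ from $p$ produces a $bX$-open $W\supseteq K$ with $p\notin\overline{W}^{bX}$; its trace $W\cap X$ is an $X$-neighborhood of $K$, hence contains some $O_{n}$, which forces $\overline{U_{n}}^{bX}=\overline{U_{n}\cap X}^{bX}\subseteq\overline{W}^{bX}$ and therefore $p\notin\overline{U_{n}}^{bX}$, a contradiction. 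This is precisely where compactness of $K$, density of $X$, and normality of $bX$ must be interlocked carefully; once it is done, $P\cap X=\bigcap_{n}O_{n}=K$ gives $P=K\subseteq X$.

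\medskip
With the Key Lemma both remaining implications become short. For (c)$\Rightarrow$(a): let $Y=bX\setminus X$ be Lindel\"{o}f and $F\subseteq X$ compact; for each $y\in Y$ choose continuous $f_{y}\colon bX\to[0,1]$ with $f_{y}(F)=0$ and $f_{y}(y)=1$, and set $C_{y}=f_{y}^{-1}(0)$, a closed $G_{\delta}$ with $F\subseteq C_{y}$ and $y\notin C_{y}$. The sets $bX\setminus C_{y}$ cover $Y$, so Lindel\"{o}fness extracts a countable subcover and $P=\bigcap_{n}C_{y_{n}}$ is a compact $G_{\delta}$ with $F\subseteq P\subseteq X$; the Key Lemma then places $F$ in a compact set of countable character. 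For (a)$\Rightarrow$(b): given any $bX$ and an open cover $\mathscr{W}$ of $Y$ by $bX$-open sets, the set $F=bX\setminus\bigcup\mathscr{W}$ is compact and contained in $X$, so countable type and the Key Lemma enclose it in $K=\bigcap_{n}\overline{U_{n}}^{bX}=\bigcap_{n}U_{n}\subseteq X$. Then $bX\setminus K=\bigcup_{n}(bX\setminus U_{n})$ is a countable union of compacta, each contained in $bX\setminus F=\bigcup\mathscr{W}$; finitely many members of $\mathscr{W}$ cover each piece, and countably many therefore cover $Y\subseteq bX\setminus K$. Hence $Y$ is Lindel\"{o}f, closing the cycle of implications.
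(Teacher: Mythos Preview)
The paper does not prove this statement at all: Theorem~\ref{t14} is quoted as a classical result of Henriksen and Isbell \cite{H1958} and used as a black box in Section~6 (in the proofs of Lemma~\ref{l10} and Theorem~\ref{t1}). There is therefore nothing in the paper against which to compare your argument.

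That said, your proof is correct and follows the standard route. The Key Lemma --- that a compact $K\subseteq X$ has countable character in $X$ if and only if it is a compact $G_{\delta}$-subset of $bX$ contained in $X$ --- is exactly the right bridge, and your handling of the one genuinely delicate step (showing $P=\bigcap_{n}\overline{U_{n}}^{\,bX}$ misses the remainder) is sound: density of $X$ gives $\overline{U_{n}}^{\,bX}=\overline{U_{n}\cap X}^{\,bX}$, and separating a hypothetical remainder point from $K$ forces a contradiction via the base property of $\{O_{n}\}$. The deductions (c)$\Rightarrow$(a) and (a)$\Rightarrow$(b) then go through as you outline. One cosmetic remark: in the converse half of the Key Lemma you should arrange the nesting $\overline{U_{n+1}}^{\,bX}\subseteq U_{n}$ inductively (interpolating inside $G_{n}\cap U_{n-1}$ rather than just inside $G_{n}$), since you rely on $\bigcap_{n}\overline{U_{n}}^{\,bX}=\bigcap_{n}U_{n}$ in the (a)$\Rightarrow$(b) step; this is routine using normality of $bX$ and does not affect the argument.
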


\begin{theorem}\label{t3}
Suppose that $G$ is a paratopological group, and $Y=bG\setminus G$
is a remainder of $G$. If $Y$ has countable pseudocharacter, then at
least one of the following conditions is satisfied
\begin{enumerate}
\item the space $G$ is of countable type\footnote{Recall that a space $X$ is of {\it countable type} if every
compact subspace $F$ of $X$ is contained in a compact subspace
$K\subset X$ with a countable base of open neighborhoods in $X$.};

\item the space $Y$ is first countable.
\end{enumerate}
\end{theorem}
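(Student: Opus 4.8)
The plan is to first convert condition (1) into a statement purely about the remainder, and then split on local compactness. By the Henriksen--Isbell theorem (Theorem~\ref{t14}), $G$ is of countable type if and only if $Y$ is Lindel\"{o}f; so it suffices to show that $Y$ is Lindel\"{o}f or $Y$ is first countable. Since a paratopological group is homogeneous, either $G$ is locally compact or $G$ is nowhere locally compact. The locally compact case is immediate: a locally compact dense subspace of the Hausdorff space $bG$ is open in $bG$, so $Y=bG\setminus G$ is closed in $bG$ and hence compact; thus $Y$ is Lindel\"{o}f and (1) holds. (Incidentally $Y$ is then compact with countable pseudocharacter, hence first countable, so (2) holds too.)

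The real content is the nowhere locally compact case, in which $G$ has empty interior in $bG$ and therefore $Y$ is dense in $bG$. Here I would argue the contrapositive of what we want: assuming $Y$ is \emph{not} first countable, I would show $G$ is of countable type, whence $Y$ is Lindel\"{o}f and (1) holds. The first step is a compact $G_\delta$ construction at a fixed $y\in Y$. Using $\psi(Y)\le\omega$, choose open sets $V_n$ of $bG$ with $y\in V_{n+1}$, $\overline{V_{n+1}}\subset V_n$, and $\bigcap_n V_n\cap Y=\{y\}$; then $\Phi=\bigcap_n\overline{V_n}=\bigcap_n V_n$ is a compact $G_\delta$ of $bG$ with $\Phi\cap Y=\{y\}$, so $\Phi\setminus\{y\}\subset G$. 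If $y$ were isolated in $\Phi$, then $\{y\}$ would be a $G_\delta$ of the compact Hausdorff space $bG$, hence of countable character in $bG$ and therefore in $Y$; so I may assume $y$ is not isolated in $\Phi$. Failure of first countability at $y$ then means $\{V_n\cap Y\}$ is not a local base: there is an open $O\ni y$ and points $z_n\in (V_n\setminus O)\cap Y$. Any cluster point $p$ of $\{z_n\}$ lies in $\bigcap_k\overline{V_k}=\Phi$ (the tails lie in each $V_k$) and in the closed set $bG\setminus O$, so $p\neq y$; since $\Phi\cap Y=\{y\}$, this forces $p\in\Phi\cap G$. Thus a failure of first countability produces a nonempty compact piece of $G$ sitting on the boundary of $Y$ and carrying a sequence of $Y$-points clustering at an interior point $p\in G$.

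The hard part, and the step where the group structure must genuinely be used, is to convert this local configuration into the global countable-type property of $G$. For topological groups this is the core of Arhangel'skii's remainder dichotomy, and the obstacle here is to reproduce it for paratopological groups using only the joint continuity of the multiplication $p(\cdot,\cdot)$ and the homogeneity of $G$, rather than continuity of the inverse. Concretely, I expect to translate the compact $G_\delta$ fibres $\Phi_{y'}$ across $G$ by the multiplication maps $p(a,\cdot)$, which are homeomorphisms of $G$, exactly in the spirit of the $q(x_n,\cdot)$-translation arguments used in Theorems~\ref{t7} and~\ref{t12}; homogeneity then upgrades control near the single boundary point $p$ to control near every compact subset of $G$, producing for each compact $F\subset G$ a compact set of countable character containing it. I anticipate that this is precisely where the argument is delicate: the one-sided continuity of a paratopological group gives only $U\cdot(x\cdot U)$-type neighbourhood estimates (as in Claims~3 and~5 earlier), so the translated fibres must be handled asymmetrically, and verifying that the resulting neighbourhood system is genuinely countable is the main thing to check. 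Granting this, Henriksen--Isbell (Theorem~\ref{t14}) turns countable type into Lindel\"{o}fness of $Y$, completing the dichotomy: either $Y$ is first countable, or $G$ is of countable type.
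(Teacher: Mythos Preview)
Your setup matches the paper's: assume $Y$ fails to be first countable at some $y_{0}$, and build the compact $G_{\delta}$ set $\Phi\subset bG$ with $\Phi\cap Y=\{y_{0}\}$; then $\Phi\neq\{y_{0}\}$, since otherwise $\{y_{0}\}$ would be a compact $G_{\delta}$ in $bG$ and hence of countable character in $Y$. From this point, however, you take an unnecessary detour through cluster points of a sequence $\{z_{n}\}\subset Y$, and then declare the remaining step ``hard'' and leave it as speculation about translation arguments along the lines of Theorems~\ref{t7} and~\ref{t12}.

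The paper's finish is much shorter, and your own argument already contains the key observation. You used the fact that a compact $G_{\delta}$ in the compact Hausdorff space $bG$ has a countable base of neighbourhoods when you handled the case $\Phi=\{y_{0}\}$; apply it again to $\Phi$ itself. Now intersect $\Phi$ with any closed $G_{\delta}$ subset of $bG$ that meets $\Phi\setminus\{y_{0}\}$ but misses $y_{0}$: the result is a non-empty compact $B\subset G$ which is still a $G_{\delta}$ in $bG$, hence of countable character in $bG$ and therefore in $G$. At this point the paper simply invokes \cite[Proposition~4.1]{A2009}: a paratopological group containing a non-empty compact subspace of countable character is of countable type. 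The translation machinery you anticipate is essentially what underlies that proposition, but there is no need to reprove it here; the ``hard part'' you flag is a one-line citation.

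So the genuine gap is that you leave the final implication open and overestimate its difficulty, while the cluster-point construction you do carry out is not what is needed (a single accumulation point $p\in G$ is not enough; you need a non-empty compact set in $G$ of countable character). The Henriksen--Isbell reformulation and the locally-compact case split are harmless but redundant: the paper argues directly toward condition~(1), and its argument does not require $Y$ to be dense in $bG$.
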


\begin{proof}
Let $Y$ be a non-first countable space. Then there exists a point
$y_{0}\in Y$ such that $Y$ is not first countable at point $y_{0}$.
Since $Y$ has countable pseudocharacter, the point $y_{0}$ is a
$G_{\delta}$-point in $Y$, and hence there exists a compact subset
$F\subset bG$ such that $F$ has a countable open neighborhood base
in $bG$ and $F\cap (bG\setminus G)=\{y_{0}\}$. Then
$F\setminus\{y_{0}\}\neq\emptyset$ because $Y$ is not first
countable at point $y_{0}$. Therefore, there is a non-empty compact
subset $B\subset F$ such that $B$ has a countable neighborhood base
in $bG$ and $y_{0}\not\in B$. It is obvious that $B\subset G$. It
follows from \cite[Proposition 4.1]{A2009} that $G$ is of countable
type.
\end{proof}

\begin{theorem}
Suppose that $G$ is a non-locally compact, $k$-gentle
paratopological group, and $Y=bG\setminus G$ is a remainder of $G$.
If $Y$ has locally a regular $G_{\delta}$-diagonal\footnote{A space
$X$ is said to have a {\it regular $G_{\delta}$-diagonal} if the
diagonal $\Delta=\{(x, x): x\in X\}$ can be represented as the
intersection of the closures of a countable family of open
neighborhoods of $\Delta$ in $X\times X$.}, then $G$ is a
topological group, and hence $G, bG$ and $Y$ are separable and
metrizable spaces.
\end{theorem}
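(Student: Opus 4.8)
The plan is to force the remainder $Y$ to be Lindel\"of, reduce to the topological-group case via Lemma~\ref{l1}, and only then extract metrizability from the group structure. I would open with two quick reductions. Since $G$ is not locally compact it is not compact, so $Y=bG\setminus G\neq\emptyset$; and since a regular $G_{\delta}$-diagonal on an open set $O\subseteq Y$ makes every point of $O$ a $G_{\delta}$ in $O$, hence a $G_{\delta}$ in $Y$, the local hypothesis yields $\psi(Y)\le\omega$. Having countable pseudocharacter is exactly what is needed to feed $Y$ into Theorem~\ref{t3}.

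By Theorem~\ref{t3} either (1) $G$ is of countable type, or (2) $Y$ is first countable. In case (1), Theorem~\ref{t14} (Henriksen--Isbell) gives that $Y$ is Lindel\"of, and Lemma~\ref{l1} then makes $G$ a topological group, so this case is immediate. Thus the whole difficulty is case (2). Here Lemma~\ref{l0} says $Y$ is pseudocompact or Lindel\"of; if it is Lindel\"of we again finish by Lemma~\ref{l1}, so the single possibility that must be excluded is that $Y$ is first countable, pseudocompact, and (being non-compact, since $G$ is not locally compact) genuinely non-Lindel\"of.

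To exclude this I would invoke the known fact that a pseudocompact space carrying a regular $G_{\delta}$-diagonal is compact and metrizable: applied to $Y$ this would make $Y$ compact, and a space with a compact remainder is locally compact, contradicting the hypothesis on $G$ and hence forcing $Y$ to be Lindel\"of after all. The gap is that we are given the diagonal property only locally, so the essential step---and the one I expect to be the main obstacle---is to promote the local regular $G_{\delta}$-diagonal to a global one on the pseudocompact space $Y$. Here I would try to exploit first countability together with the feeble compactness of $Y$, and the fact that regular-closed subsets of a pseudocompact space are again pseudocompact, to patch the locally prescribed diagonal neighborhoods (including across the boundaries $\overline{O}\setminus O$, where the local hypothesis again supplies suitable neighborhoods) into a single regular $G_{\delta}$-diagonal for $Y$. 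Once that is achieved the classical result applies and the contradiction is reached, so $Y$ is Lindel\"of in every case and Lemma~\ref{l1} yields that $G$ is a topological group.

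It remains to upgrade "topological group" to "separable and metrizable." At this stage $Y$ is a Lindel\"of remainder of the non-locally-compact topological group $G$, and $Y$ still carries a regular $G_{\delta}$-diagonal locally; using the Lindel\"of property to pass to a countable subcover of the diagonal neighborhoods I would assemble a global $G_{\delta}$-diagonal on $Y$, and then appeal to the metrization theorem for non-locally-compact topological groups whose remainders have a $G_{\delta}$-diagonal to conclude that $G$, and therefore $bG$ and $Y$, are separable and metrizable. Besides the local-to-global gluing of the previous paragraph, the one further point I would need to pin down precisely is the exact statement and reference of this concluding metrization theorem for topological groups.
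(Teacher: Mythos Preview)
Your proof strategy is more circuitous than the paper's and contains a self-identified gap that the paper's argument shows is entirely avoidable.

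The detour through Theorem~\ref{t3} is unnecessary: the paper goes straight to Lemma~\ref{l0}, obtaining that $Y$ is either pseudocompact or Lindel\"of, and then rules out the pseudocompact case directly. More importantly, the step you flag as the main obstacle---promoting the \emph{local} regular $G_{\delta}$-diagonal to a \emph{global} one on the pseudocompact $Y$---is never carried out in the paper, and need not be. The paper's trick is to apply the ``pseudocompact $+$ regular $G_{\delta}$-diagonal $\Rightarrow$ metrizable'' result not to $Y$ itself but to each closure $\overline{U_{y}}$, where $U_{y}$ is the open set on which the diagonal hypothesis holds. Since $\overline{U_{y}}$ is a regular closed subset of the pseudocompact space $Y$, it is itself pseudocompact; and since it inherits a regular $G_{\delta}$-diagonal, it is metrizable, hence compact. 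This makes every point of $Y$ have a compact neighborhood, i.e.\ $Y$ is locally compact---contradicting the fact that the remainder of a non-locally-compact space is nowhere locally compact. So $Y$ must be Lindel\"of, Lemma~\ref{l1} makes $G$ a topological group, and the concluding metrization follows from Arhangel'ski\u{\i}'s result in \cite{A2007} on remainders of non-locally-compact topological groups.

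In short: rather than globalizing the diagonal hypothesis and then deriving a contradiction on all of $Y$, keep the argument local and derive local compactness of $Y$ pointwise. This eliminates the gluing problem you were worried about, and also removes any need to pass through first countability or Theorem~\ref{t3}.
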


\begin{proof}
Clearly, the space $Y$ is nowhere locally compact since $G$ is
non-locally compact. It follows from Lemma~\ref{l0} that $Y$ is
pseudocompact or Lindel$\ddot{\mbox{o}}$f.

Claim 8: The space $Y$ is Lindel$\ddot{\mbox{o}}$f.

Suppose not, we assume that $Y$ is pseudocompact. Since $Y$ has
locally a regular $G_{\delta}$-diagonal, for each $y\in Y$, there is
an open neighborhood $U_{y}$ of $y$ in $Y$ such that
$\overline{U_{y}}$ has a regular $G_{\delta}$-diagonal. Obviously,
the set $\overline{U_{y}}$ is pseudocompact because $Y$ is pseudocompact.
For each $y\in Y$, the set $\overline{U_{y}}$ is metrizable, and hence
$\overline{U_{y}}$ is compact. Then $Y$ is locally compact, which is
a contradiction.

It follows from Lemma~\ref{l1} and Claim 8 that $G$ is a topological
group. Therefore, it follows that $G, bG$ and $Y$ are separable and metrizable by
\cite{A2007}.
\end{proof}

\begin{corollary}
Suppose that $G$ is a non-locally compact, $k$-gentle
paratopological group, and $Y=bG\setminus G$ is a remainder of $G$.
If $Y$ has a regular $G_{\delta}$-diagonal, then $G, bG$ and $Y$ are
separable and metrizable spaces.
\end{corollary}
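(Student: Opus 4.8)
The plan is to obtain this corollary as an immediate specialization of the theorem proved just above, whose hypotheses differ from those here only in asking for \emph{locally} a regular $G_{\delta}$-diagonal rather than a global one. Thus the entire task reduces to verifying that a global regular $G_{\delta}$-diagonal yields a local one; once this is in place, the conclusion that $G$, $bG$ and $Y$ are separable and metrizable is read off directly from the preceding theorem.

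The key observation I would record first is that the property of having a regular $G_{\delta}$-diagonal is hereditary. Suppose $\Delta=\bigcap_{n\in\mathbb{N}}\overline{U_{n}}$, where each $U_{n}$ is an open neighborhood of the diagonal $\Delta$ of $Y\times Y$. For an arbitrary subspace $A\subset Y$, the sets $U_{n}\cap(A\times A)$ are open neighborhoods of $\Delta_{A}$ in $A\times A$, and using the identity $\overline{S}^{\,A\times A}=\overline{S}^{\,Y\times Y}\cap(A\times A)$ for subspace closures one computes
\[
\bigcap_{n\in\mathbb{N}}\overline{U_{n}\cap(A\times A)}^{\,A\times A}
=\Big(\bigcap_{n\in\mathbb{N}}\overline{U_{n}}^{\,Y\times Y}\Big)\cap(A\times A)
=\Delta\cap(A\times A)=\Delta_{A}.
\]
Hence $A$ again has a regular $G_{\delta}$-diagonal. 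Here both inclusions matter: $\Delta_{A}\subset\overline{U_{n}\cap(A\times A)}^{\,A\times A}$ gives the lower bound, while $\overline{U_{n}\cap(A\times A)}^{\,A\times A}\subset\overline{U_{n}}^{\,Y\times Y}\cap(A\times A)$ gives the upper bound, and together they pin down the intersection.

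With heredity established I would conclude as follows. For each $y\in Y$ pick any open neighborhood $U_{y}$ of $y$ in $Y$; its closure $\overline{U_{y}}$ is a subspace of $Y$, and therefore carries a regular $G_{\delta}$-diagonal by the previous paragraph. Consequently $Y$ has locally a regular $G_{\delta}$-diagonal, so the hypotheses of the preceding theorem are met by the non-locally compact, $k$-gentle paratopological group $G$, and that theorem gives at once that $G$, $bG$ and $Y$ are separable and metrizable. I do not expect any genuine obstacle: the only point needing care is the closure identity displayed above, and the whole argument is otherwise a direct appeal to the theorem just proved.
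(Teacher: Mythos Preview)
Your proposal is correct and matches the paper's intended approach: the corollary is stated without proof immediately after the theorem, so the paper regards it as an immediate specialization once one notes that a (global) regular $G_{\delta}$-diagonal is inherited by the closures $\overline{U_{y}}$. Your heredity verification is a bit more detailed than the paper bothers with, but it is exactly the missing step and is carried out correctly.
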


\begin{theorem}
Suppose that $G$ is a non-locally compact, $k$-gentle
paratopological group, and $Y=bG\setminus G$ is a remainder of $G$.
If $Y$ has locally a $\sigma$-point-finite base, then $G$ is a
topological group, and hence $G, bG$ and $Y$ are separable and
metrizable spaces.
\end{theorem}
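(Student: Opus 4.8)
The plan is to reproduce the argument of the preceding theorem almost verbatim, replacing the local regular $G_{\delta}$-diagonal hypothesis by the local $\sigma$-point-finite base hypothesis. Since $G$ is non-locally compact, the remainder $Y=bG\setminus G$ is nowhere locally compact, and Lemma~\ref{l0} gives that $Y$ is either pseudocompact or Lindel\"{o}f. Exactly as before, once $Y$ is known to be Lindel\"{o}f, Lemma~\ref{l1} forces $G$ to be a topological group, and then the separability and metrizability of $G$, $bG$ and $Y$ follow from \cite{A2007}. Thus everything reduces to ruling out the pseudocompact alternative.

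To do this I would argue by contradiction and assume $Y$ is pseudocompact. Using that $Y$ has locally a $\sigma$-point-finite base, for each $y\in Y$ I choose an open neighborhood $U_{y}$ whose closure $\overline{U_{y}}$ admits a $\sigma$-point-finite base. Being closed in the pseudocompact space $Y$, each $\overline{U_{y}}$ is itself pseudocompact. Granting that a pseudocompact space with a $\sigma$-point-finite base is compact, every point of $Y$ then has a neighborhood with compact closure, so $Y$ is locally compact, contradicting the fact that it is nowhere locally compact. Hence $Y$ must be Lindel\"{o}f, and the proof concludes as described in the first paragraph.

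The step I expect to be the main obstacle is precisely this metrization fact: that a pseudocompact (Tychonoff) space with a $\sigma$-point-finite base is compact and metrizable, the exact analogue of the regular $G_{\delta}$-diagonal fact invoked in the previous theorem. A natural route is to observe first that a $\sigma$-point-finite base is automatically point-countable, since each point lies in only finitely many members of each of the countably many point-finite layers. One then wants to extract a $G_{\delta}$-diagonal from the base (or argue directly) and combine it with pseudocompactness to obtain metrizability, after which a metrizable pseudocompact space is compact, closing the loop. Pinning down this metrization implication --- the only genuinely new ingredient beyond the scheme of the previous theorem --- is where the real work lies.
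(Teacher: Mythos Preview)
Your proposal is correct and follows the paper's proof essentially line for line. Two small remarks: the metrization fact you flag as the obstacle is precisely Uspenskij's theorem \cite{V1984} (pseudocompact spaces with a $\sigma$-point-finite base are metrizable), which the paper simply quotes; and for the final step the paper cites \cite{LFC2009} rather than \cite{A2007}, since the hypothesis on $Y$ is only local --- though your observation that a $\sigma$-point-finite base is point-countable would let you reduce to \cite{A2007} as well once $Y$ is Lindel\"{o}f. Also, be careful with the phrase ``being closed in the pseudocompact space $Y$'': closed subspaces of pseudocompact spaces need not be pseudocompact, but regular closed sets (closures of open sets) are, which is what you actually need for $\overline{U_{y}}$.
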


\begin{proof}
It follows from Lemma~\ref{l0} that $Y$ is pseudocompact or
Lindel$\ddot{\mbox{o}}$f.

Suppose that $Y$ is pseudocompact. Since $Y$ has locally a
$\sigma$-point-finite base, for each $y\in Y$, there is an open
neighborhood $U_{y}$ of $y$ in $Y$ such that $\overline{U_{y}}$ has
a $\sigma$-point-finite base. Clearly, the set $\overline{U_{y}}$ is
pseudocompact because $Y$ is pseudocompact. For each $y\in Y$,
the set $\overline{U_{y}}$ is metrizable \cite{V1984}, and hence
$\overline{U_{y}}$ is compact. Then $Y$ is locally compact which is
a contradiction. Therefore, the space $Y$ is Lindel$\ddot{\mbox{o}}$f.

It follows from Lemma~\ref{l1} that $G$ is a topological group.
Therefore, we have $G, bG$ and $Y$ are separable and metrizable by
\cite{LFC2009}.
\end{proof}

\begin{corollary}
Suppose that $G$ is a non-locally compact, $k$-gentle
paratopological group, and $Y=bG\setminus G$ is a remainder of $G$.
If $Y$ has locally an uniform base\footnote{Let the family $\mathscr{P}$ be a
base of a space $X$. The family $\mathscr{P}$ is an {\it uniform
base} \cite{E1989} for $X$ if for each point $x\in X$ and
$\mathscr{P}^{\prime}$ is a countably infinite subset of
$(\mathscr{P})_{x}$, the family $\mathscr{P}^{\prime}$ is a neighborhood base
at $x$.}, then $G, bG$ and $Y$ are separable and metrizable spaces.
\end{corollary}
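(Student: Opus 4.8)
The plan is to deduce this corollary from the preceding theorem by showing that the hypothesis ``locally a uniform base'' is merely a special case of ``locally a $\sigma$-point-finite base''. Concretely, I would first unwind the footnote definition: a uniform base $\mathscr{P}$ is exactly a \emph{point-regular} base, in the sense that for every $x$ and every neighborhood $U$ of $x$, all but finitely many members of $\mathscr{P}$ containing $x$ lie in $U$. (If infinitely many members of $(\mathscr{P})_{x}$ missed $U$, that countably infinite subfamily could not be a neighborhood base at $x$.) By Aleksandrov's classical characterization, a $T_{1}$-space therefore has a uniform base if and only if it is metacompact and developable, and this is the form I would use.

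The key step is then to upgrade a uniform base to a $\sigma$-point-finite base. Starting from a development $\{\mathscr{G}_{n}: n\in\mathbb{N}\}$ of the developable space, metacompactness lets me replace each $\mathscr{G}_{n}$ by a point-finite open refinement $\mathscr{U}_{n}$. Since passing to a refinement can only shrink stars, for each $x$ we have $\mathrm{st}(x,\mathscr{U}_{n})\subset \mathrm{st}(x,\mathscr{G}_{n})$, so $\{\mathscr{U}_{n}: n\in\mathbb{N}\}$ is again a development; hence $\bigcup_{n\in\mathbb{N}}\mathscr{U}_{n}$ is a base, and it is $\sigma$-point-finite because each $\mathscr{U}_{n}$ is point-finite. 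As the property of possessing a uniform base is inherited by (closed) subspaces, this shows that every space with a uniform base has a $\sigma$-point-finite base.

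Applying this locally, for each $y\in Y$ the closed set $\overline{U_{y}}$ furnished by the hypothesis has a uniform base, hence a $\sigma$-point-finite base; thus $Y$ has locally a $\sigma$-point-finite base, and the conclusion that $G$ is a topological group with $G$, $bG$ and $Y$ separable and metrizable is precisely the preceding theorem. I expect the only real obstacle to be this passage from a uniform base to a $\sigma$-point-finite base. A shorter self-contained alternative would bypass it entirely: using Lemma~\ref{l0}, suppose $Y$ is pseudocompact; then each $\overline{U_{y}}$ is a pseudocompact regular developable space, i.e. a pseudocompact Moore space, which is compact and metrizable, forcing $Y$ to be locally compact and contradicting that $Y$ is nowhere locally compact. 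This again leaves only the Lindel\"of alternative, to which Lemma~\ref{l1} and the metrization results cited in the preceding theorem apply.
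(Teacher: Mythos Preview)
Your proposal is correct and matches the paper's intended approach: the corollary is stated without proof immediately after the theorem on locally $\sigma$-point-finite bases, so the implicit argument is exactly the reduction you spell out (uniform base $\Rightarrow$ metacompact developable $\Rightarrow$ $\sigma$-point-finite base, applied to each $\overline{U_{y}}$). Your alternative self-contained route via Lemma~\ref{l0} is also sound---note only that the step ``pseudocompact Moore space $\Rightarrow$ compact'' is really using the metacompactness supplied by the uniform base (pseudocompact $+$ metacompact $\Rightarrow$ compact), which you have already recorded.
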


\begin{question}\label{q2}
Suppose that $G$ is a non-locally compact, $k$-gentle
paratopological group, and $Y=bG\setminus G$ is a remainder of $G$.
If $Y$ has a point-countable base, are $G, bG$ and $Y$ separable and
metrizable spaces?
\end{question}

\begin{question}\label{q3}
Suppose that $G$ is a non-locally compact, $k$-gentle
paratopological group, and $Y=bG\setminus G$ is a remainder of $G$.
If $Y$ has a $G_{\delta}$-diagonal, are $G, bG$ and $Y$ separable
and metrizable spaces?
\end{question}

Next, we give some partial answers to Questions~\ref{q2}
and~\ref{q3}.

\begin{theorem}\label{t0}
Suppose that $G$ is a non-locally compact, $k$-gentle
paratopological group, and $Y=bG\setminus G$ is a remainder of $G$.
If $Y$ has a point-countable base and a $G_{\delta}$-diagonal, then
$G, bG$ and $Y$ are separable and metrizable spaces.
\end{theorem}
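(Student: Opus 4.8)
The plan is to mirror the structure of the preceding proofs in this section: apply the pseudocompact/Lindel\"of dichotomy of Lemma~\ref{l0}, eliminate the pseudocompact alternative, reduce to a topological group via Lemma~\ref{l1}, and only then invoke a metrization theorem. First I would record, exactly as in the earlier arguments, that $Y$ is nowhere locally compact, since $G$ is assumed to be non-locally compact. By Lemma~\ref{l0}, the remainder $Y$ is then either pseudocompact or Lindel\"of.

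Next I would dispose of the pseudocompact case, and it is here that the point-countable base is indispensable. The subfamily of base members containing a given point is a local base there, so a point-countable base makes $Y$ first countable; moreover it is known that a pseudocompact space with a point-countable base is compact. But a nonempty compact space is everywhere locally compact, contradicting that $Y$ is nowhere locally compact. Hence $Y$ is not pseudocompact, so $Y$ is Lindel\"of, and Lemma~\ref{l1} immediately yields that $G$ is a topological group (equivalently, by Theorem~\ref{t14}, $G$ is of countable type).

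The core of the argument is to metrize the Lindel\"of remainder $Y$ using both hypotheses simultaneously. Here I would run the standard point-countable-base machinery: a space with a point-countable base is quasi-developable, and a quasi-developable space with a $G_\delta$-diagonal is developable, so $Y$ is a regular Moore space. A Lindel\"of Moore space is second countable (apply Lindel\"ofness at each stage of a development to extract a countable base), so $Y$ is separable and metrizable.

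Finally, $G$ is a non-locally-compact topological group whose remainder $Y$ is separable and metrizable, and the conclusion follows from the known metrization theorem for topological groups possessing a metrizable remainder (the group-theoretic analogue of the results cited in the earlier proofs): $G$ is separable and metrizable, whence $bG=G\cup Y$ is a metrizable compactification and is separable and metrizable as well. The main obstacle is precisely the metrization step for $Y$: neither a point-countable base nor a $G_\delta$-diagonal is known to suffice by itself (these are exactly Questions~\ref{q2} and~\ref{q3}), so the proof must genuinely combine them. One must also be careful that it is the point-countable base, and not the $G_\delta$-diagonal, that eliminates the pseudocompact case: a pseudocompact space with a $G_\delta$-diagonal alone need not be compact, as the Mr\'owka space shows.
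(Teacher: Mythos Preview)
Your overall strategy---use the pseudocompact/Lindel\"of dichotomy of Lemma~\ref{l0}, kill the pseudocompact branch, pass to a topological group via Lemma~\ref{l1}, then metrize---is sound and is in fact cleaner than the paper's route. The paper does not use Lemma~\ref{l0} here; instead it invokes a result from \cite{A2009} (triggered by the $G_\delta$-diagonal on $Y$) to split into ``$G$ is of countable type'' versus ``$G$ is $\sigma$-compact'', and in the second case runs a longer argument through $c(Y)\le\omega$, \v{C}ech-completeness of $Y$, a dense paracompact \v{C}ech-complete subspace $Z$, and Filippov-type metrization of $Z$ to force $Y$ to be separable with a countable base. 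Your elimination of the pseudocompact case via Shakhmatov's theorem (a pseudocompact space with a point-countable base is metrizable, hence compact) is correct and much shorter, though it imports a result not cited in the paper.

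There is, however, a genuine gap in your metrization step for $Y$. The implication ``point-countable base $\Rightarrow$ quasi-developable'' is false: the Michael line has a point-countable base (rational open intervals together with irrational singletons) and a $G_\delta$-diagonal, yet it is paracompact and non-metrizable, hence not developable, hence not quasi-developable (since quasi-developable plus $G_\delta$-diagonal does give developable). So your chain ``point-countable base $\Rightarrow$ quasi-developable $\Rightarrow$ Moore $\Rightarrow$ second countable'' breaks at the first link, and Lindel\"ofness alone does not obviously repair it.

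The fix is simply to drop this step. Once you have established that $Y$ is Lindel\"of and hence (Lemma~\ref{l1}) that $G$ is a topological group, you are exactly in the situation of the paper's Case~1: a non-locally compact topological group whose remainder has a point-countable base. At that point cite \cite{A2007} directly, as the paper does; Arhangel'ski\u{\i}'s theorem there already yields that $G$, $bG$ and $Y$ are separable and metrizable, with no need to metrize $Y$ by hand first.
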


\begin{proof}
Since $Y$ has a $G_{\delta}$-diagonal, the space $G$ is $\sigma$-compact or is
of countable type \cite{A2009}.

Case 1: The space $G$ is of countable type.

Then $Y$ is Lindel$\ddot{\mbox{o}}$f, and hence $G$ is a topological
group by Lemma~\ref{l1}. Therefore, it follows that $G, bG$ and $Y$ are separable
and metrizable spaces by \cite{A2007}.

Case 2: The space $G$ is $\sigma$-compact.

It is obvious that $c(G)\leq\omega$, and hence $c(bG)\leq\omega$.
Since $Y$ is dense in $bG$, we have $c(Y)\leq\omega$. There exists a
paracompact $\check{C}$ech-complete dense subset $Z\subset Y$
because $Y$ is $\check{C}$ech-complete. Since $Z$ has a
point-countable base, the subspace $Z$ is metrizable by \cite[Corollary
7.10]{Gr1984}. We have $c(Z)\leq\omega$ because $Z$ is dense in $Y$. Hence
$Z$ is separable, and $Y$ is a separable space. Therefore, the space $Y$ has a
countable base since $Y$ is separable space with a point-countable
base. Thus $Y$ is Lindel$\ddot{\mbox{o}}$f, and hence $G$ is a
topological group by Lemma~\ref{l1}. Therefore, it follows that $G, bG$ and $Y$ are
separable and metrizable spaces by \cite{A2007}.
\end{proof}

\begin{theorem}
Suppose that $G$ is a non-locally compact, $k$-gentle
paratopological group, and $Y=bG\setminus G$ is a remainder of $G$.
If $Y$ is locally normal with a $G_{\delta}$-diagonal, then $G, bG$
and $Y$ are separable and metrizable spaces.
\end{theorem}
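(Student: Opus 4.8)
The plan is to follow the same dichotomy that drove the previous two theorems, namely to invoke Lemma~\ref{l0} so that the remainder $Y$ is either pseudocompact or Lindel\"{o}f, and then to rule out the pseudocompact case using the local normality together with the $G_{\delta}$-diagonal. First I would fix an arbitrary point $y\in Y$ and use local normality to pass to an open neighborhood $U_{y}$ of $y$ whose closure $\overline{U_{y}}$ is normal. The nontrivial observation I will need is that having a $G_{\delta}$-diagonal is inherited by subspaces, so $\overline{U_{y}}$ also has a $G_{\delta}$-diagonal. The aim is to upgrade this to a \emph{regular} $G_{\delta}$-diagonal by exploiting normality, and then to repeat the mechanism of the previous proofs.

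Concretely, if $Y$ is pseudocompact, then since $G$ is non-locally compact the space $Y$ is nowhere locally compact, so each $\overline{U_{y}}$ inherits pseudocompactness from $Y$ as a closed subset. The key step is that a normal pseudocompact space with a $G_{\delta}$-diagonal is metrizable: normality lets one replace a $G_{\delta}$-diagonal by a regular $G_{\delta}$-diagonal (this is the classical fact that in a normal space a $G_{\delta}$-diagonal can be taken regular), and a pseudocompact space with a regular $G_{\delta}$-diagonal is compact and metrizable. Thus each $\overline{U_{y}}$ is compact metrizable, which forces $Y$ to be locally compact, contradicting the fact that $Y$ is nowhere locally compact. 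Hence the pseudocompact alternative is impossible and $Y$ must be Lindel\"{o}f.

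Once $Y$ is Lindel\"{o}f, I would invoke Lemma~\ref{l1} to conclude that $G$ is a topological group. At that stage the remainder $Y$ is the remainder of a non-locally compact topological group, and the conclusion that $G$, $bG$ and $Y$ are all separable and metrizable follows from the same cited results (\cite{A2007}) used in the earlier theorems of this section, given that $Y$ has a $G_{\delta}$-diagonal and is Lindel\"{o}f.

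The main obstacle I anticipate is the metrizability lemma in the pseudocompact case: one must correctly assemble the two classical ingredients, first that normality converts a $G_{\delta}$-diagonal into a regular $G_{\delta}$-diagonal, and second that a pseudocompact space with a regular $G_{\delta}$-diagonal is compact and metrizable. Verifying that $\overline{U_{y}}$ simultaneously satisfies normality, pseudocompactness, and possession of a $G_{\delta}$-diagonal (all inherited properly from $Y$ and the local hypothesis) is where care is required; everything after reaching the Lindel\"{o}f conclusion is a direct citation, paralleling the previous proofs.
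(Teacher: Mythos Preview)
Your overall strategy matches the paper's: invoke Lemma~\ref{l0} for the pseudocompact/Lindel\"of dichotomy, show the pseudocompact branch forces $Y$ locally compact (a contradiction), and in the Lindel\"of branch apply Lemma~\ref{l1} and cite the literature.

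There is, however, a genuine gap in your pseudocompact step. You claim that ``normality lets one replace a $G_{\delta}$-diagonal by a regular $G_{\delta}$-diagonal''. This is not a standard fact, and in general it fails: to shrink the open neighborhoods $G_{n}$ of the diagonal so that $\Delta\subset\overline{H_{n}}\subset G_{n}$ one needs normality of $X\times X$, which is not implied by normality of $X$. The paper avoids this entirely by a different chain of classical results: from normality and pseudocompactness of $\overline{U_{y}}$ it concludes that $\overline{U_{y}}$ is \emph{countably compact} (a normal pseudocompact space is countably compact), and then invokes Chaber's theorem that a countably compact space with a $G_{\delta}$-diagonal is compact and metrizable. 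This yields local compactness of $Y$ and the desired contradiction without ever touching regular $G_{\delta}$-diagonals.

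A minor point on the Lindel\"of branch: the paper cites \cite{Liu2009} rather than \cite{A2007} for the final step, since the relevant result there covers remainders of non-locally compact topological groups having (locally) a $G_{\delta}$-diagonal; \cite{A2007} is the reference for point-countable bases used in the earlier theorems, not for $G_{\delta}$-diagonals.
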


\begin{proof}
It follows from Lemma~\ref{l0} that $Y$ is pseudocompact or
Lindel$\ddot{\mbox{o}}$f.

Case 1: The space $Y$ is pseudocompact.

For every $y\in Y$, since $Y$ is locally normal with a
$G_{\delta}$-diagonal, there exists an open neighborhood $U$ of $y$
such that $\overline{U}$ is is normal subspace with a
$G_{\delta}$-diagonal. Then $\overline{U}$ is pseudocompact, and
hence $\overline{U}$ is countably compact and metirzable. It follows
that $Y$ is locally compact. Therefore, it follows that $G, bG$ and $Y$ are
separable and metrizable spaces by Theorem~\ref{t0}.

Case 2: The space $Y$ is Lindel$\ddot{\mbox{o}}$f.

It follows from Lemma~\ref{l1} that $G$ is a topological group.
Since $Y$ has locally a $G_{\delta}$-diagonal, we have $G, bG$ and $Y$ are
separable and metrizable spaces \cite{Liu2009}.
\end{proof}

\begin{theorem}
Suppose that $G$ is a non-locally compact, $k$-gentle
paratopological group, and $Y=bG\setminus G$ is a remainder of $G$.
If $Y$ is a c.c.c space with a sharp base\footnote{Let the family $\mathscr{P}$
be a base of a space $X$. The family $\mathscr{P}$ is a {\it sharp
base} \cite{AJ} for $X$ if for each point $x\in X$ and $\{P_{n}:
n\in \mathbb{N}\}\subset (\mathscr{P})_{x}$, where $P_{n}\neq P_{m}$
whenever $n\neq m$, the family $\{\cap_{n\leq i}P_{n}:i\in\mathbb{N}\}$ is a
neighborhood base at $x$.}, then $G, bG$ and $Y$ are separable and
metrizable spaces.
\end{theorem}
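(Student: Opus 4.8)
The plan is to reduce the hypotheses on the remainder to those of Theorem~\ref{t0}: I want to show that the c.c.c.\ together with the sharp base forces $Y$ to be second countable. Once $Y$ has a countable base it has both a point-countable base and a $G_{\delta}$-diagonal, and Theorem~\ref{t0} then applies verbatim to conclude that $G$, $bG$ and $Y$ are separable and metrizable. In this way the whole $k$-gentle machinery (Lemmas~\ref{l0} and~\ref{l1}) is already packaged inside Theorem~\ref{t0}, and the substance of the argument is purely point-set topology of the sharp base.

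First I would observe that, since all spaces here are Tychonoff and in particular $T_{1}$, a sharp base is automatically a \emph{weakly uniform base}, i.e.\ any two distinct points lie in only finitely many of its members. Indeed, if distinct $x,y$ belonged to infinitely many members $P_{1},P_{2},\dots$ of the sharp base, then by sharpness at $x$ the family $\{\bigcap_{n\le i}P_{n}:i\in\mathbb{N}\}$ would be a neighborhood base at $x$, each of whose members contains $y$; this forces $y$ into every neighborhood of $x$, contradicting $T_{1}$. Next I would invoke the Heath--Lindgren theorem that a c.c.c.\ regular space with a weakly uniform base has a countable base. Applying it to $Y$ gives that $Y$ is second countable, hence separable and metrizable, which is exactly the input Theorem~\ref{t0} requires.

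To keep the argument parallel to the other results of this section, one may instead finish via the dichotomy of Lemma~\ref{l0}. Since $G$ is non-locally compact, $Y$ is non-empty and nowhere locally compact, so the pseudocompact alternative is impossible: a sharp base yields a regular $G_{\delta}$-diagonal \cite{AJ}, and a pseudocompact space with a regular $G_{\delta}$-diagonal is metrizable (the same fact used in the regular-$G_{\delta}$-diagonal theorem above), hence compact and locally compact---a contradiction. Thus $Y$ is Lindel\"{o}f, $G$ is a topological group by Lemma~\ref{l1}, and, $Y$ being second countable by the previous paragraph, \cite{A2007} again yields that $G$, $bG$ and $Y$ are separable and metrizable.

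The main obstacle is neither the group structure nor the compactification bookkeeping, both of which are absorbed into Lemmas~\ref{l0}, \ref{l1} and Theorem~\ref{t0}, but the single topological input that the countable chain condition upgrades a sharp (equivalently, weakly uniform) base to a countable one. Everything else---the passage from a sharp base to a weakly uniform base, and the reduction of second countability to the hypotheses of Theorem~\ref{t0}---is routine.
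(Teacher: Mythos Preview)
Your route differs from the paper's, and the difference is precisely at the step you single out as the crux. The paper does \emph{not} try to show that $Y$ is second countable outright. Instead it runs the dichotomy of Lemma~\ref{l0} immediately and handles the two cases with facts taken from \cite{A2000}: in the pseudocompact case it quotes that a pseudocompact c.c.c.\ space with a sharp base is metrizable, and then feeds the resulting metrizable $Y$ into Theorem~\ref{t0}; in the Lindel\"of case it uses only that a sharp base is point-countable, applies Lemma~\ref{l1} to make $G$ a topological group, and finishes with \cite{A2007}. Nowhere does it need ``c.c.c.\ $+$ weakly uniform base $\Rightarrow$ countable base.''

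That claim is the weak point of your write-up. Heath and Lindgren's paper on weakly uniform bases establishes the connection with quasi-developability and $G_\delta$-diagonals, but it does not prove that a c.c.c.\ regular space with a weakly uniform base is second countable, and I do not know this as a ZFC theorem. (Your reduction ``sharp base $\Rightarrow$ weakly uniform base'' is fine, and so is the subsequent observation that separable $+$ WUB $\Rightarrow$ second countable; the missing link is c.c.c.\ $\Rightarrow$ separable in this context.) Since your alternative paragraph also leans on that same second-countability claim in the Lindel\"of branch, both versions inherit the gap. The repair is painless and brings you in line with the paper: in the Lindel\"of branch replace ``$Y$ is second countable'' by ``a sharp base is point-countable'' (this is in \cite{A2000}) and invoke \cite{A2007}; in the pseudocompact branch either cite \cite{A2000} directly for ``pseudocompact $+$ c.c.c.\ $+$ sharp base $\Rightarrow$ metrizable'' or, if you keep the regular-$G_\delta$-diagonal argument, make sure the implication ``sharp base $\Rightarrow$ regular $G_\delta$-diagonal'' is actually available in the reference you cite.
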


\begin{proof}
It follows from Lemma~\ref{l0} that $Y$ is pseudocompact or
Lindel$\ddot{\mbox{o}}$f.

Case 1: The space $Y$ is pseudocompact.

Since a pseudocompact and c.c.c space with a sharp base is
metrizable \cite{A2000}, it follows that $G, bG$ and $Y$ are separable and metrizable
spaces by Theorem~\ref{t0}.

Case 2: The space $Y$ is Lindel$\ddot{\mbox{o}}$f.

It follows from Lemma~\ref{l1} that $G$ is a topological group.
Since $Y$ has a sharp base, the space $Y$ has a point-countable
base \cite{A2000}. Therefore, we have $G, bG$ and $Y$ are separable and
metrizable spaces \cite{A2007}.
\end{proof}

Next, we consider two questions posed in \cite{Liu2009} and
\cite{Liu20091}, respectively.

\begin{question}\cite[Question 6]{Liu2009}\label{q0}
Let $G$ be a non-locally compact topological group, if the remainder
$Y=bG\backslash G$ is a quotient $s$-image of a metric space, are
$G$ and $bG$ separable and metrizable?
\end{question}

\begin{question}\cite[Question 5.2]{Liu20091}\label{q1}
Let $G$ be a non-locally compact topological group, if the remainder
$Y=bG\backslash G$ of a Hausdorff compactification of $G$ has a
point-countable weak base, are $G$ and $bG$ separable and
metrizable?
\end{question}

Now we give a partial answers to for Questions~\ref{q0}
and~\ref{q1}.

\begin{lemma}\label{l10}
Let $G$ be a $k$-gentle paratopological group. Then $G$ is
Lindel$\ddot{\mbox{o}}$f if and only if there exists a
compactification $bG$ such that, for any compact subset $F\subset
Y=bG\setminus G$, there is a compact subset $L\subset
Y$ which contains $F$ and is a $G_{\delta}$-subset in $Y$.
\end{lemma}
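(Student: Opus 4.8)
The statement is a characterization of Lindel\"of $k$-gentle paratopological groups, but in fact both implications are general facts about a Tychonoff space $G$ and its remainder $Y=bG\setminus G$; the only feature of $bG$ I use is that it is compact Hausdorff, hence normal. One caveat guides the interpretation: the condition must be read as asserting that $L$ is a $G_{\delta}$-set \emph{of $bG$} lying in $Y$, not merely a $G_{\delta}$ of the subspace $Y$. Indeed, if $G$ is an uncountable discrete group then $Y$ is compact, so $L=Y$ is trivially a $G_{\delta}$ in $Y$ while $G$ fails to be Lindel\"of; what genuinely fails is that $Y$ is not a $G_{\delta}$ of $bG$. I would therefore prove: $G$ is Lindel\"of if and only if there is a compactification $bG$ such that every compact $F\subseteq Y$ is contained in a compact $G_{\delta}$-subset $L$ of $bG$ with $L\subseteq Y$.

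For the forward implication, assume $G$ is Lindel\"of and fix a compact $F\subseteq Y$. For each $x\in G$, using normality of $bG$ and $x\notin F$, I would pick an open $O_{x}\ni x$ in $bG$ with $\overline{O_{x}}\cap F=\emptyset$. The family $\{O_{x}\cap G:x\in G\}$ covers the Lindel\"of space $G$, so a countable subfamily $\{O_{x_{n}}\}$ already covers $G$; then $\bigcup_{n}O_{x_{n}}\supseteq G$, whence $P=\bigcap_{n}(bG\setminus\overline{O_{x_{n}}})$ is a $G_{\delta}$-set of $bG$ with $F\subseteq P\subseteq Y$. To upgrade $P$ to a \emph{compact} $G_{\delta}$-set I use normality again: writing the defining open sets of $P$ as a decreasing sequence $U_{n}$, I choose open $V_{n}$ with $F\subseteq V_{n}\subseteq\overline{V_{n}}\subseteq U_{n}\cap V_{n-1}$ and set $L=\bigcap_{n}\overline{V_{n}}=\bigcap_{n}V_{n}$. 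This $L$ is closed in the compact space $bG$, hence compact, is visibly a $G_{\delta}$, and satisfies $F\subseteq L\subseteq P\subseteq Y$.

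For the converse, suppose $bG$ has the stated property and let $\gamma$ be any cover of $G$ by open subsets of $bG$; it suffices to extract a countable subcover. The set $F_{0}=bG\setminus\bigcup\gamma$ is closed in $bG$, hence compact, and lies in $Y$ because $\gamma$ covers $G$. Applying the hypothesis, I enclose $F_{0}$ in a compact $G_{\delta}$-set $L=\bigcap_{n}W_{n}\subseteq Y$ with each $W_{n}$ open in $bG$. Put $C_{n}=bG\setminus W_{n}$, a compact set. The key observation is $C_{n}\cap F_{0}\subseteq C_{n}\cap L=\emptyset$, so $C_{n}\subseteq bG\setminus F_{0}=\bigcup\gamma$; by compactness of $C_{n}$ some finite $\gamma_{n}\subseteq\gamma$ covers $C_{n}$. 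Since $\bigcup_{n}C_{n}=bG\setminus L\supseteq G$, the countable family $\bigcup_{n}\gamma_{n}$ covers $G$, so $G$ is Lindel\"of.

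The two delicate points are exactly these. In the forward direction the obstacle is producing a set that is simultaneously compact and a $G_{\delta}$: the countable-subcover step only yields a $G_{\delta}$ $P$ inside $Y$, and one must shrink it by a second normality argument to gain compactness without leaving $Y$. In the converse the crucial and slightly surprising step is the identity $C_{n}\cap L=\emptyset$, which converts the single compact $G_{\delta}$ hull $L$ of the uncovered part $F_{0}$ into a countable exhaustion $\{C_{n}\}$ of $bG\setminus L\supseteq G$ by compacta, each covered by finitely many members of $\gamma$. Neither step invokes the group operation or $k$-gentleness; those hypotheses merely fix the context in which the lemma is applied, and one could alternatively route the Lindel\"of conclusion through Theorem~\ref{t14} and Lemma~\ref{l1}, but the direct compactification argument above is shorter and self-contained.
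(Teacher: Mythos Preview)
Your direct compactification argument is correct, and your counterexample is on target: for an uncountable discrete group $G$ the remainder $Y$ is compact, so $L=Y$ is trivially a $G_\delta$ of $Y$ while $G$ is not Lindel\"of. Under your amended reading ($L$ a $G_\delta$ of $bG$), both directions go through exactly as you wrote, for an arbitrary Tychonoff space, with no appeal to the group structure.

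The paper, however, proves the statement with the literal ``$G_\delta$ in $Y$'' reading and takes a completely different route that genuinely uses the $k$-gentle paratopological group hypotheses. For the forward implication it simply invokes the Henriksen--Isbell theorem (Theorem~\ref{t14}) to conclude that $Y$ is of countable type, which is stronger than what is needed. For the converse it applies the dichotomy of Lemma~\ref{l0}: either $Y$ is pseudocompact or $Y$ is Lindel\"of. In the pseudocompact branch the key step is the classical fact that a compact $G_\delta$-set in a pseudocompact space has a countable base of neighborhoods; this upgrades the hypothesis to ``$Y$ is of countable type'', and Henriksen--Isbell then yields that $G$ is Lindel\"of. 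In the Lindel\"of branch, Lemma~\ref{l1} forces $G$ to be a topological group, hence (again via Theorem~\ref{t14}) a paracompact $p$-space, and the paper concludes by quoting \cite[Lemma~2.3]{A20091}.

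So the two arguments buy different things. Yours is elementary, self-contained, and shows the phenomenon is purely topological once the $G_\delta$ condition is read in $bG$. The paper's argument ties the lemma to the pseudocompact/Lindel\"of dichotomy that drives the whole section, and explains where the structural hypotheses enter. Your discrete counterexample lands precisely in the paper's Lindel\"of branch; its resolution there is delegated to the external \cite[Lemma~2.3]{A20091}, and the surrounding theorems (Theorems~\ref{t1} and~\ref{t2}) all carry an explicit non-local-compactness assumption, so an implicit such hypothesis is almost certainly intended here as well.
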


\begin{proof}
If $G$ is Lindel$\ddot{\mbox{o}}$f, then $Y$ is of countable type by
Theorem~\ref{t14}. Therefore, we only need to show the
sufficiency. It follows from Lemma~\ref{l0} that $Y$ is
pseudocompact or Lindel$\ddot{\mbox{o}}$f.

Case 1: The space $Y$ is Lindel$\ddot{\mbox{o}}$f.

It follows from Lemma~\ref{l1} that $G$ is a topological group, and
hence $G$ is a paracompact $p$-space. Therefore, the space $G$ is
Lindel$\ddot{\mbox{o}}$f by \cite[Lemma 2.3]{A20091}.

Case 2: The space $Y$ is pseudocompact.

Let $F\subset
Y$ be a compact subset. Then there is a compact subset $L\subset
Y$ which contains $F$ and is a $G_{\delta}$-subset in $Y$. Since compact subset $L$ of $Y$ is a $G_{\delta}$-set and $Y$ is
pseudocompact, it is well known that compact subset $L\subset
Y$ has a countably open neighborhood base, and hence $Y$ is of
countable type. Therefore $G$ is Lindel$\ddot{\mbox{o}}$f by Theorem~\ref{t14}.
\end{proof}

\begin{theorem}\label{t1}
Suppose that $G$ is a non-locally compact, $k$-gentle
paratopological group, and $Y=bG\setminus G$ is a remainder of $G$.
Then the following conditions are equivalent:
\begin{enumerate}
\item the space $Y$ is of subcountable type\footnote{Recall that a space $X$ is of {\it subcountable type} \cite{A20091} if every
compact subspace $F$ of $X$ is contained in a compact $G_{\delta}$
subspace $K$ of $X$.};

\item the space $Y$ is of countable type.
\end{enumerate}
\end{theorem}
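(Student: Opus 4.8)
The plan is to prove the equivalence of the two conditions in Theorem~\ref{t1} by showing that the nontrivial direction $(1)\Rightarrow(2)$ follows from the dichotomy provided by Lemma~\ref{l0}, together with the characterization of Lindel\"{o}fness in Lemma~\ref{l10}. The direction $(2)\Rightarrow(1)$ is immediate: if $Y$ is of countable type, then every compact $F\subset Y$ sits inside a compact $K\subset Y$ with a countable open neighborhood base in $Y$, and such a $K$ is automatically a $G_\delta$-subset of $Y$, so $Y$ is of subcountable type. Thus the whole content lies in $(1)\Rightarrow(2)$.

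For $(1)\Rightarrow(2)$, I would first invoke Lemma~\ref{l0} to split into the two cases that a remainder of a $k$-gentle paratopological group must satisfy: either $Y$ is Lindel\"{o}f, or $Y$ is pseudocompact. Suppose $Y$ is of subcountable type. Observe that the hypothesis of subcountable type is exactly the hypothesis appearing in Lemma~\ref{l10}: every compact $F\subset Y$ is contained in a compact $G_\delta$-subset $L$ of $Y$. Hence Lemma~\ref{l10} applies and yields that $G$ is Lindel\"{o}f. By Theorem~\ref{t14} (Henriksen--Isbell), a space is of countable type if and only if its remainder in some (any) compactification is Lindel\"{o}f; equivalently, $G$ is Lindel\"{o}f precisely when $Y=bG\setminus G$ is of countable type. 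Therefore $Y$ is of countable type, which is exactly condition (2).

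The cleanest route, then, is essentially to recognize that conditions (1) and (2) are each equivalent to the Lindel\"{o}fness of $G$, with Lemma~\ref{l10} supplying the bridge from subcountable type to Lindel\"{o}f and Theorem~\ref{t14} supplying the bridge between Lindel\"{o}f $G$ and countable type of $Y$. Concretely: $(2)\Rightarrow(1)$ is formal; and $(1)\Rightarrow G$ Lindel\"{o}f by Lemma~\ref{l10}; and $G$ Lindel\"{o}f $\Rightarrow Y$ of countable type by Theorem~\ref{t14}, giving $(2)$. I expect the main obstacle to be verifying that the sufficiency hypothesis of Lemma~\ref{l10} is met uniformly for \emph{every} compact $F\subset Y$ — that is, that "subcountable type" delivers the compact $G_\delta$-superset $L$ for all compact $F$ simultaneously, which is precisely the quantifier structure Lemma~\ref{l10} requires. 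Once that match is confirmed, the rest is a direct citation chain, so the argument should be short.
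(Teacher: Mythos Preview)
Your proposal is correct and matches the paper's own proof, which is the one-line citation ``It is easy to see by Lemma~\ref{l10} and Theorem~\ref{t14}''; you have simply unpacked that citation chain (subcountable type of $Y$ $\Rightarrow$ $G$ Lindel\"{o}f via Lemma~\ref{l10}, then $G$ Lindel\"{o}f $\Rightarrow$ $Y$ of countable type via Henriksen--Isbell applied to $Y$, using that $G$ non-locally compact and homogeneous makes $Y$ dense in $bG$). The invocation of Lemma~\ref{l0} is redundant since Lemma~\ref{l10} already absorbs that dichotomy, and your worry about the quantifier match is unfounded---the definition of subcountable type is exactly the universal statement Lemma~\ref{l10} requires.
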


\begin{proof}
It is easy to see by Lemma~\ref{l10} and Theorem~\ref{t14}.
\end{proof}

\begin{theorem}\label{t2}
Suppose that $G$ is a non-locally compact, $k$-gentle
paratopological group, and $Y=bG\setminus G$ is a remainder of $G$.
If $Y$ is $\kappa$-perfect\footnote{Recall that a space $X$ is of
{\it $\kappa$-perfect} \cite{A20091} if every compact subspace $F$ is
a $G_{\delta}$ subspace of $X$.}, then $Y$ is first countable.
\end{theorem}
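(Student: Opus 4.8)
The plan is to reduce the problem to the theory of spaces of countable type, and then to lift countable character from a suitable compact subset up to all of $Y$. First I would observe that, since $Y$ is $\kappa$-perfect, every compact subspace $F$ of $Y$ is itself a compact $G_{\delta}$-subset of $Y$, so $Y$ is trivially of subcountable type. By Theorem~\ref{t1} this already yields that $Y$ is of \emph{countable} type. Fixing an arbitrary point $y\in Y$, the singleton $\{y\}$ is then contained in a compact subspace $K\subseteq Y$ admitting a countable outer base; using regularity of $Y$ (all spaces here are Tychonoff) I may take this base to be a decreasing sequence $\{W_{n}:n\in\mathbb{N}\}$ of open sets with $\overline{W_{n+1}}\subseteq W_{n}$ and $\bigcap_{n}W_{n}=K$.

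Next I would analyse the internal structure of $K$. Because $\{y\}$ is compact and $Y$ is $\kappa$-perfect, $\{y\}$ is a $G_{\delta}$-set in $Y$, hence also a $G_{\delta}$-set in the compact Hausdorff space $K$; and in a compact Hausdorff space a $G_{\delta}$-point has countable character. So $y$ has a countable neighbourhood base $\{U_{n}:n\in\mathbb{N}\}$ in $K$, which I arrange to be decreasing with $\overline{U_{n+1}}^{K}\subseteq U_{n}$ and $\bigcap_{n}\overline{U_{n}}^{K}=\{y\}$ (set $U_{0}=K$). For each $n$ I would choose an open $V_{n}$ in $Y$ with $V_{n}\cap K=U_{n}$. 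Here I would exploit that the disjoint compacta $\overline{U_{n}}^{K}$ and $K\setminus U_{n-1}$ can be separated in the regular space $Y$ by open sets with disjoint closures; intersecting the naive choice with the open set surrounding $\overline{U_{n}}^{K}$ lets me additionally demand $\overline{V_{n}}\cap K\subseteq U_{n-1}$. After replacing $V_{n}$ by $W_{n}\cap\bigcap_{i\leq n}V_{i}$ I may assume $\{V_{n}\}$ is decreasing with $V_{n}\subseteq W_{n}$, while keeping $V_{n}\cap K=U_{n}$ and $\overline{V_{n}}\cap K\subseteq U_{n-1}$.

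It then remains to verify that $\{V_{n}:n\in\mathbb{N}\}$ is a neighbourhood base at $y$ in $Y$, which proves that $Y$ is first countable at $y$, and, as $y$ was arbitrary, first countable. Suppose toward a contradiction that some open $O\ni y$ contains no $V_{n}$, and pick $z_{n}\in V_{n}\setminus O$. Since $z_{n}\in V_{n}\subseteq W_{n}$ and $\{W_{n}\}$ is a decreasing outer base of the compact set $K$, a standard finite-subcover argument shows that $\{z_{n}\}$ clusters at some point $z\in\bigcap_{n}\overline{W_{n}}=K$. As the $V_{n}$ are decreasing, $z\in\bigcap_{n}\overline{V_{n}}$, whence $z\in\bigcap_{n}(\overline{V_{n}}\cap K)\subseteq\bigcap_{n}U_{n-1}=\{y\}$, so $z=y\in O$; but each $z_{n}$ lies in the closed set $Y\setminus O$, forcing $z\in Y\setminus O$, a contradiction.

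The step I expect to be the main obstacle is the boundary control in the construction of the $V_{n}$: a naive open set of $Y$ meeting $K$ exactly in $U_{n}$ may accumulate, from outside $K$, at points of $K\setminus U_{n-1}$, and it is precisely the requirement $\overline{V_{n}}\cap K\subseteq U_{n-1}$ that pins the cluster point $z$ back to $y$ in the final step. Securing this separation, together with checking that the clustering argument genuinely returns a point of $K$ (rather than escaping to $G$), is the only nonroutine part; the reduction to countable type via Theorem~\ref{t1} and the remaining verifications are bookkeeping.
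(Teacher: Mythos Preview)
Your proposal is correct and follows the same route as the paper: reduce to $Y$ being of countable type via Theorem~\ref{t1}, then use that every point of $Y$ is a $G_{\delta}$-point to conclude first countability. The paper states the second implication in one line as a known fact, whereas you spell out in full the standard argument (compact $K$ with countable outer base, $G_{\delta}$-point has countable character in $K$, then the finite-subcover clustering argument to lift this to $Y$); your worry about the boundary control $\overline{V_n}\cap K\subseteq U_{n-1}$ and the clustering back into $K$ is handled correctly.
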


\begin{proof}
It follows from Theorem~\ref{t1} that $Y$ is of countable type.
Since every point of $Y$ is a $G_{\delta}$-point, the space $Y$ is first
countable.
\end{proof}

Let $\mathcal{A}$ be a collection of subsets of $X$. The collection $\mathcal{A}$ is a {\it
$p$-metabase} \cite{BD} for $X$ if for distinct points $x, y\in X$, there
exists an $\mathcal{F}\in \mathcal{A}^{<\omega}$ such that $x\in
(\cup\mathcal{F})^{\circ}\subset\cup\mathcal{F}\subset X-\{y\}$.

\begin{lemma}\label{l11}
Suppose that $X$ has a point-countable $p$-metabase.
Then each countably compact subset of $X$ is a compact, metrizable
$G_{\delta}$-subset of $X$.
\end{lemma}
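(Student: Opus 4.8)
The plan is to first pass to the trace of the metabase on the countably compact set and reduce the compactness and metrizability assertions to statements about $K$ alone. Writing $\mathcal{A}_{K}=\{A\cap K:A\in\mathcal{A}\}$, I would check that $\mathcal{A}_{K}$ is again a point-countable $p$-metabase, now for $K$: point-countability is inherited, and for distinct $x,y\in K$ the finite family $\mathcal{F}\in\mathcal{A}^{<\omega}$ separating them in $X$ satisfies $x\in(\cup\mathcal{F})^{\circ}\cap K\subseteq\cup(\mathcal{F}\cap K)\subseteq K-\{y\}$ with $x$ in the relative interior. A second, repeatedly used, observation is a regularity upgrade: since $x\in(\cup\mathcal{F})^{\circ}$ is an interior point and the space is regular, I can shrink to an open $O$ with $x\in O\subseteq\overline{O}\subseteq\cup\mathcal{F}\subseteq X-\{y\}$, so that in fact $y\notin\overline{O}$. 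Thus a $p$-metabase yields, for every pair of distinct points, a genuine closure-separation $x\in O$, $y\notin\overline{O}$ in which the neighborhood $O$ sits inside a finite union of members of $\mathcal{A}$.

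For compactness and metrizability the plan is to exhibit a $G_{\delta}$-diagonal on $K$ and then invoke two classical facts: Chaber's theorem (a countably compact space with a $G_{\delta}$-diagonal is compact) followed by the metrization of compact Hausdorff spaces with a $G_{\delta}$-diagonal. Concretely, a $G_{\delta}$-diagonal amounts to a \emph{countable} family $\{U_{n}\}$ of open subsets of $K$ such that for all distinct $x,y$ some $U_{n}$ has $x\in U_{n}$ and $y\notin\overline{U_{n}}$; the closure-separations of the first paragraph supply such a $U$ for every pair, and the task is to cut the a priori uncountable supply down to countably many. This is exactly where point-countability is decisive: I expect to invoke Mi\v{s}\v{c}enko's lemma on the finite unions $\cup\mathcal{F}$ to show that only countably many essentially different separators occur, leaving the required countable family. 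With this in hand, Chaber yields compactness and metrizability simultaneously.

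For the $G_{\delta}$-subset claim I would argue directly in $X$, now using the compactness of $K$ that the previous step provides. Given $y\in X\setminus K$, for each $x\in K$ I choose a separating finite $\mathcal{F}_{x,y}$ and its shrunk neighborhood $O_{x,y}$ with $x\in O_{x,y}$, $\overline{O_{x,y}}\subseteq\cup\mathcal{F}_{x,y}\not\ni y$; by compactness finitely many $O_{x_{1},y},\dots,O_{x_{m},y}$ cover $K$, so $\mathcal{H}_{y}=\bigcup_{i}\mathcal{F}_{x_{i},y}$ is a finite subfamily with $K\subseteq\bigcup_{i}O_{x_{i},y}\subseteq(\cup\mathcal{H}_{y})^{\circ}$ and $y\notin\cup\mathcal{H}_{y}$. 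Each $\mathcal{H}_{y}$ contains a minimal finite subcover of $K$, and by Mi\v{s}\v{c}enko's lemma these minimal covers form a countable list $\{\mathcal{G}_{k}\}$; since every external $y$ is missed by some $\mathcal{G}_{k(y)}$, one obtains $K=\bigcap_{k}(\cup\mathcal{G}_{k})$. The concluding step is to turn this into a countable intersection of \emph{open} neighborhoods $U_{k}\supseteq K$, for which I would pair each $\mathcal{G}_{k}$ with a fixed finite open interior-cover $V_{0}=\bigcup_{j}(\cup\mathcal{E}_{j})^{\circ}\supseteq K$ (from compactness) and use the regularity shrinks to keep $K$ inside each $U_{k}$ while still excluding the relevant external points.

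The hard part throughout will be that the members of $\mathcal{A}$ are not assumed open, so the Mi\v{s}\v{c}enko count naturally lives on the non-open unions $\cup\mathcal{F}$, whereas both the $G_{\delta}$-diagonal and the outer $G_{\delta}$-neighborhoods require open sets that contain $K$ in their interior. Reconciling these two demands—passing from $\cup\mathcal{F}$ to the open interior $(\cup\mathcal{F})^{\circ}$ and to the regularity shrink $O$ with $\overline{O}\subseteq\cup\mathcal{F}$, while guaranteeing that the countably many minimal covers furnished by Mi\v{s}\v{c}enko's lemma still place $K$ in their interiors and still exclude the chosen external point—is where the genuine work lies (cf.\ the machinery of \cite{BD}). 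I anticipate handling it by systematically combining each minimal cover with the fixed finite interior-cover $V_{0}$ of the compact set $K$ and shrinking via regularity, so that the interior condition and the exclusion condition are maintained at the same time.
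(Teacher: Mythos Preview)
Your argument that $K$ is a $G_{\delta}$-subset of $X$ is essentially the paper's own: enumerate the countably many minimal finite neighborhood-covers $\mathscr{V}(n)$ of $K$ via the generalized Mi$\breve{s}\breve{c}$enko lemma of \cite{YL}, and observe that each exterior point is missed by some $\cup\mathscr{V}(n)$. Your concern about the members of $\mathcal{A}$ not being open is unnecessary here: by definition a neighborhood-cover already places $K$ in the \emph{interior} of its union, so $K=\bigcap_{n}(\cup\mathscr{V}(n))^{\circ}$ is a $G_{\delta}$ directly, with no need for regularity shrinks or an auxiliary finite interior-cover $V_{0}$.

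The genuine gap is in your compactness/metrizability step. The paper does not attempt what you propose: it simply quotes \cite{BD} for the fact that a countably compact subset of a space with a point-countable $p$-metabase is compact, and only then runs the Mi$\breve{s}\breve{c}$enko argument with $K$ already known to be compact. Your plan to first build a $G_{\delta}$-diagonal on the merely countably compact $K$ and then invoke Chaber is circular as written. Mi$\breve{s}\breve{c}$enko's lemma counts the minimal finite (neighborhood-)covers of a \emph{single fixed set} from a point-countable family; it does not hand you a countable supply of point-separators. Applying it to each singleton $\{x\}$ yields countably many separating families \emph{per point}, hence $|K|\cdot\omega$ many in total; applying it to $K$ itself (or to the diagonal $\Delta_{K}\subset K\times K$) requires every relevant cover to admit a finite subcover, i.e.\ requires $K$ to be compact---precisely what you are trying to prove. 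So the sentence ``invoke Mi$\breve{s}\breve{c}$enko's lemma on the finite unions $\cup\mathcal{F}$ to show that only countably many essentially different separators occur'' does not cash out, and the appeal to \cite{BD} (or an independent proof that countably compact plus point-countable $p$-metabase forces compactness) is not a step you can bypass.
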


\begin{proof}
Suppose that $\mathscr{U}$ is a point-countable $p$-metabase of $X$,
 and that $K$ is a compact subset of $X$. Then $K$ is compact by
\cite{BD}. It follows from \cite{GMT1} that for distinct $x, y\in
X$, there exists a finite subfamily $\mathscr{F}\subset
 \mathscr{U}$ such that $x\in (\cup\mathscr{F})^{\circ}\subset\cup\mathscr{F}\subset
 X-\{y\}$. According to a generalized Mi$\breve{s}\breve{c}$enko's Lemma in
\cite[Lemma 6]{YL}, there are only countably many minimal
neighborhood-covers\footnote{Let $\mathscr{P}$ be a collection of
subsets of $X$ and $A\subset X$. The collection $\mathscr{P}$ is a {\it
neighborhood-cover} of $A$ if $A\subset (\cup\mathscr{P})^{\circ}$.
A neighborhood-cover $\mathscr{P}$ of $A$ is a {\it minimal
neighborhood-cover} if for each $P\in \mathscr{P}$,
the family $\mathscr{P}\setminus\{P\}$ is not a neighborhood-cover of $A$.} of
$K$ by finite elements of $\mathscr{U}$, say $\{\mathscr{V}(n):n\in
\mathbb{N}\}$. For each $n\in \mathbb{N}$, let
$V(n)=\cup\mathscr{V}(n)$. Then $K\subset\cap\{V(n):n\in
\mathbb{N}\}$. Suppose that $x\in X\setminus K$. For each point
$y\in K$, there is an $\mathscr{F}_{y}\in\mathscr{U}^{<\omega}$ with
$y\in (\cup\mathscr{F}_{y})^{\circ}\subset
\cup\mathscr{F}_{y}\subset X-\{x\}$. Then there is some
sub-collection of $\cup\{\mathscr{F}_{y}:y\in K\}$ which is a
minimal finite neighborhood-covers of $K$, since $K$ is compact.
Therefore, we obtain one of the collections $\mathscr{V}(n)$ with
$K\subset V(n)=\cup\mathscr{V}(n)\subset X-\{x\}$.
\end{proof}

\begin{theorem}\cite{LFC2009}\label{t10}
Suppose that $G$ is a non-locally compact topological group, and
that $Y=bG\setminus G$ has a locally point-countable
$p$-metabase. Then $G$ and $bG$ are separable and metrizable if
$\pi$-character of $Y$ is countable.
\end{theorem}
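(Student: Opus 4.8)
The plan is to run the standard dichotomy for remainders of $k$-gentle paratopological groups and then reduce everything to the metrizability of the small closed neighborhoods supplied by the local $p$-metabase. First I would note that a topological group is a $k$-gentle paratopological group (the inverse map is a homeomorphism, hence sends compacta to compacta), so Lemma~\ref{l0} applies and $Y$ is either pseudocompact or Lindel\"of; moreover, since $G$ is non-locally compact, $Y$ is nowhere locally compact. Fix, for each $y\in Y$, an open neighborhood $U_y$ such that $\overline{U_y}$ carries a point-countable $p$-metabase (this is the meaning of ``locally''), and observe that $\overline{U_y}$ inherits a countable $\pi$-character from $Y$.

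The technical heart is the claim that every $\overline{U_y}$ is metrizable, and I would derive it from the machinery behind Lemma~\ref{l11}. Applying the generalized Mi\v{s}\v{c}enko lemma of \cite{YL} to a one-point set $\{z\}$ with $z\in\overline{U_y}$ produces countably many minimal finite neighborhood-covers, hence countably many open sets $V_n\ni z$; since the $p$-metabase separates distinct points, $\bigcap_n V_n=\{z\}$, so each point of $\overline{U_y}$ is a $G_\delta$ and the diagonal of $\overline{U_y}$ is a $G_\delta$-set. The role of the countable $\pi$-character is to upgrade the family $\{V_n\}$ to a genuine countable neighborhood base at $z$: a point-countable $p$-metabase by itself does not force first countability (unlike a point-countable base, whose members through a point already form a countable base there), but intersecting the $V_n$ with the members of a countable $\pi$-base at $z$ pins down a decreasing countable base. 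Thus $\overline{U_y}$ is first countable, and a first-countable regular space with a point-countable $p$-metabase and a $G_\delta$-diagonal is metrizable; this gives the claim.

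With the claim in hand I would dispose of the pseudocompact alternative: if $Y$ were pseudocompact, then each closed set $\overline{U_y}$ would be pseudocompact and metrizable, hence compact, so that $Y$ would be locally compact, contradicting nowhere local compactness. Therefore $Y$ is Lindel\"of, and by Theorem~\ref{t14} the group $G$ is of countable type. In this Lindel\"of case the claim says that $Y$ is locally metrizable; since a regular Lindel\"of space is paracompact and a paracompact locally metrizable space is metrizable, $Y$ is metrizable, and being Lindel\"of it is second countable (in particular separable and equipped with a point-countable base).

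Finally, $G$ is a non-locally compact topological group whose remainder $Y$ is a Lindel\"of space with a countable base, so the Arhangel'skii theorem used in Theorem~\ref{t0} (reference \cite{A2007}) yields that $G$, $bG$ and $Y$ are all separable and metrizable. I expect the main obstacle to be the first-countability step inside the claim: extracting an honest countable base at a point out of the finite-union structure of a $p$-metabase is precisely where the countable $\pi$-character hypothesis must be spent, and making the bookkeeping of the minimal neighborhood-covers cooperate with a $\pi$-base is the delicate part of the argument.
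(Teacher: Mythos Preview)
The paper does not prove this statement; it is quoted from \cite{LFC2009} and used as a black box in the theorem that follows. So there is no in-paper argument to compare against, and I can only evaluate your sketch on its own merits.

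The overall architecture---run the pseudocompact/Lindel\"of dichotomy of Lemma~\ref{l0}, rule out pseudocompactness by forcing $Y$ to be locally compact, and in the Lindel\"of branch pass from local metrizability of $Y$ to global separable metrizability via paracompactness and \cite{A2007}---is sound. The gap is exactly where you yourself locate it, but your proposed fix does not work. Elements of a $\pi$-base at $z$ are nonempty open sets contained in every neighborhood of $z$; they are \emph{not} required to contain $z$. Hence ``intersecting the $V_n$ with the members of a countable $\pi$-base at $z$'' does not even produce neighborhoods of $z$, let alone a local base, and the step from ``$z$ is a $G_\delta$-point'' to ``$z$ has a countable base'' collapses. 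A second, independent hole is the appeal to ``a first-countable regular space with a point-countable $p$-metabase and a $G_\delta$-diagonal is metrizable''; this is not one of the standard metrization theorems, and without a reference or an argument it carries no weight.

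The way the countable $\pi$-character hypothesis is normally cashed in for topological groups is quite different from what you attempt: one transfers countable $\pi$-character from $Y$ (which is dense in $bG$) to some point of $G$---typically via an accumulation point in $G$ of a countable closed discrete subset of $Y$, whose countable $\pi$-bases assemble into a countable $\pi$-base at that accumulation point---and then uses the equality $\pi\chi(G)=\chi(G)$, valid in every topological group, to conclude that $G$ itself is first countable, hence metrizable. Your outline never touches this transfer, and that is the missing idea.
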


\begin{theorem}
Suppose that $G$ is a non-locally compact topological group, and
that $Y=bG\setminus G$ has a locally point-countable
$p$-metabase. Then $G$ and $bG$ are separable and metrizable spaces.
\end{theorem}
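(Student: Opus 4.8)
The plan is to run the pseudocompact/Lindel\"{o}f dichotomy supplied by Lemma~\ref{l0} and to feed the surviving case into Theorem~\ref{t10}. First observe that every topological group is a $k$-gentle paratopological group: the inverse map is continuous, hence carries compact sets to compact sets, so $G$ is $k$-gentle. Since $G$ is non-locally compact and homogeneous, the remainder $Y$ is nowhere locally compact and is dense in $bG$. By Lemma~\ref{l0}, $Y$ is either pseudocompact or Lindel\"{o}f, and I would dispose of these two cases separately, aiming in the end to establish $\pi\chi(Y)\le\omega$ so that Theorem~\ref{t10} applies.

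In the pseudocompact case I would derive a contradiction. For each $y\in Y$ choose, using the hypothesis, an open neighborhood $U_y$ whose closure $\overline{U_y}$ carries a point-countable $p$-metabase. As a regular-closed subset of a pseudocompact space, $\overline{U_y}$ is itself pseudocompact. The key intermediate claim is that $\overline{U_y}$ is in fact countably compact; granting this, Lemma~\ref{l11} applies to $\overline{U_y}$ (with the countably compact subset taken to be $\overline{U_y}$ itself) and forces $\overline{U_y}$ to be compact and metrizable. But then every point of $Y$ has a compact neighborhood, so $Y$ is locally compact, contradicting the fact that $Y$ is nowhere locally compact. Hence the pseudocompact case cannot occur, and $Y$ must be Lindel\"{o}f.

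In the Lindel\"{o}f case, Lemma~\ref{l1} confirms that $G$ is a topological group, and Theorem~\ref{t14} shows that $Y$ is of countable type. Covering $Y$ by the neighborhoods $U_y$ and extracting a countable subcover, the associated local $p$-metabases union to a genuine point-countable $p$-metabase of $Y$ (a countable union of point-countable families is point-countable, and the separation property transfers since each $U_y$ is open in $Y$); hence Lemma~\ref{l11} gives that every compact subset of $Y$ is compact and metrizable. Now for each $y\in Y$ fix a compact $K\ni y$ with a countable outer neighborhood base in $Y$; since $K$ is compact metrizable, a countable base of $K$ combined with the outer base of $K$ produces a countable $\pi$-base of $Y$ at $y$, so $\pi\chi(y,Y)\le\omega$. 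As $y$ is arbitrary, $\pi\chi(Y)\le\omega$, and Theorem~\ref{t10} yields that $G$ and $bG$ are separable and metrizable; $Y$, being a subspace of the metrizable space $bG$, is then separable and metrizable as well.

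The main obstacle is the pseudocompact case, and precisely the upgrade from pseudocompactness to countable compactness of $\overline{U_y}$, which is what lets Lemma~\ref{l11} (stated only for countably compact subsets) take effect. The route I would take is to note that a point-countable $p$-metabase yields a point-countable, $T_1$-separating family of open sets, and to argue that feeble compactness of $\overline{U_y}$ is incompatible with an infinite closed discrete subset in the presence of such a family, so that $\overline{U_y}$ is countably compact. A secondary but routine point needing care is the $\pi$-character computation in the Lindel\"{o}f case: one must choose the witnessing open sets with controlled closures so that any sequence escaping a fixed neighborhood of $y$ is forced to accumulate inside the compact metrizable core $K$, yielding the desired countable $\pi$-base.
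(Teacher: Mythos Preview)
Your dichotomy-based plan diverges from the paper's argument and runs into trouble in both branches.

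In the pseudocompact branch, the step you yourself flag as the ``main obstacle'' is genuinely unresolved. A $p$-metabase need not consist of open sets, so it is not clear how you extract a point-countable $T_1$-separating \emph{open} family from it; and even granting such a family, the assertion that pseudocompactness then forces countable compactness is not standard and you give no argument for it. Without this upgrade you cannot feed $\overline{U_y}$ itself into Lemma~\ref{l11}, and the intended contradiction collapses.

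In the Lindel\"{o}f branch, your appeal to Theorem~\ref{t14} is in the wrong direction: knowing that $Y$ is Lindel\"{o}f tells you that $G$ (which is the remainder of $Y$ in $bG$) is of countable type, not that $Y$ is. To conclude that $Y$ is of countable type you need $G$ to be Lindel\"{o}f, and that is precisely what Lemma~\ref{l10} (equivalently Theorem~\ref{t1}) delivers once you know $Y$ is $\kappa$-perfect---which, incidentally, you have in hand from Lemma~\ref{l11} but do not use.

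The paper avoids both difficulties by proving directly, with no case split, that $Y$ is $\kappa$-perfect: given a compact $F\subset Y$, cover it by finitely many $U_{y_i}$, write $F=\bigcup_{i\le m} F_i$ with each $F_i$ closed in $F$ (hence compact) and $F_i\subset U_{y_i}$, apply Lemma~\ref{l11} inside each $U_{y_i}$ to see that $F_i$ is $G_\delta$, and conclude that the finite union $F$ is $G_\delta$. Theorem~\ref{t2} then gives that $Y$ is first countable, whence $\pi\chi(Y)\le\omega$, and Theorem~\ref{t10} finishes. The key economy is that Lemma~\ref{l11} is only ever applied to sets that are already compact, so no pseudocompact-to-countably-compact upgrade is required.
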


\begin{proof}
Claim 9: The space $Y$ is $\kappa$-perfect.

Let $F$ be any compact subset in $Y$. For each $y\in Y$, there
exists an open subset $U_{y}$ such that $U_{y}$ has a point-countable
$p$-metabase. Since $F$ is compact, there exists a
finite subset $A\subset F$ such that $F\subset\bigcup_{y\in
A}U_{y}$. Without loss of generality, we denote $\{U_{y}: y\in A\}$
by $\{U_{y_{1}}, \cdots ,U_{y_{m}}\}$. Then $\{U_{y_{i}}\cap F: i=1,
\cdots , m\}$ is a relatively open cover of $F$. For each $1\leq
i\leq m$, there is a closed subset $F_{i}$ such that $F_{i}\subset
U_{y_{i}}$ and $F=\bigcup_{i=1}^{i=m}F_{i}$. For each $1\leq i\leq
m$, the set $F_{i}$ is a $G_{\delta}$-set by Lemma~\ref{l11}, and hence
there exists a sequence of open subsets $\{V_{in}\}$ of $U_{y_{i}}$
such that $F_{i}=\bigcap_{n=1}^{n=\infty}V_{in}$. Put
$W_{n}=\bigcup_{i=1}^{i=m}V_{in}$. Then
$F=\bigcap_{n=1}^{\infty}W_{n}$. In fact, it is obvious that
$F\subset\bigcap_{n=1}^{\infty}W_{n}$. We only need to show that
$\bigcap_{n=1}^{\infty}W_{n}\subset F$. Suppose not, let $x\in
\bigcap_{n=1}^{\infty}W_{n}\setminus F$. Since $x\not\in F_{i}$ for
each $1\leq i\leq m$, there exists a $k_{i}\in \mathbb{N}$ such that
$x\not\in V_{ik_{i}}$. Let $l=\mbox{max}\{k_{i}: 1\leq i\leq m\}$.
Then $x\not\in\bigcup_{i=1}^{m}V_{il}=W_{l}$, which is a
contradiction with $x\in W_{l}$.

By the Claim 9 and Theorem~\ref{t2}, $Y$ is first countable.
Therefore, $G$ and $bG$ are separable and metrizable spaces by
Theorem~\ref{t10}.
\end{proof}

Finally, we discuss the remainders of rectifiable spaces.

\begin{lemma}\cite{A2009}\label{l14}
Suppose that $G$ is a paracompact rectifiable space, and
$Y=bG\setminus G$ has a $G_{\delta}$-diagonal if and only if $Y$,
$G$ and $bG$ are separable and metrizable spaces.
\end{lemma}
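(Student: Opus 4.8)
The statement is a biconditional, and the plan is to treat the two directions separately, with essentially all of the work in the forward one. For the reverse direction, if $Y$, $G$ and $bG$ are separable and metrizable then in particular $Y$ is metrizable, and every metrizable space has a $G_{\delta}$-diagonal, so nothing further is needed there. Thus I would assume that $G$ is a paracompact rectifiable space and that $Y=bG\setminus G$ has a $G_{\delta}$-diagonal, and aim to show that $Y$, $G$ and $bG$ are separable and metrizable. Since $G$ is rectifiable it is homogeneous, so $G$ is either locally compact at every point or at none; in the locally compact case $Y$ is compact and a compact Hausdorff space with a $G_{\delta}$-diagonal is metrizable, so the substance of the lemma lies in the non-locally-compact case, to which I restrict from now on.

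The first real step is a dichotomy: because $Y$ has a $G_{\delta}$-diagonal, the space $G$ is either $\sigma$-compact or of countable type. This is the rectifiable analogue of the result of \cite{A2009} already invoked in the proof of Theorem~\ref{t0}, and I would split into the two corresponding cases.

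In the countable-type case, Theorem~\ref{t14} shows that $Y$ is Lindel\"of. Then $G$ is a paracompact rectifiable space of countable type with a Lindel\"of remainder, and the rectifiable counterpart of the group-theoretic argument of \cite{A2009} (exactly as in Case~1 of Theorem~\ref{t0}, together with \cite{A2007}) yields that $G$, and hence $bG$ and $Y$, are separable and metrizable. In the $\sigma$-compact case I would write $G=\bigcup_{n}K_{n}$ with each $K_{n}$ compact; then $Y=\bigcap_{n}(bG\setminus K_{n})$ is a $G_{\delta}$-subset of the compactum $bG$ and hence \v{C}ech-complete, while $\sigma$-compactness gives $c(G)\le\omega$, whence $c(bG)\le\omega$ and, by density, $c(Y)\le\omega$. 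Following Case~2 of Theorem~\ref{t0}, I would pass to a dense paracompact \v{C}ech-complete subspace $Z\subset Y$; then $Z$ is a paracompact $p$-space that inherits the $G_{\delta}$-diagonal, so $Z$ is metrizable (a paracompact $p$-space with a $G_{\delta}$-diagonal is metrizable), and $c(Z)\le\omega$ makes $Z$ separable, hence second countable. Consequently $Y$ is separable, and combining this with its \v{C}ech-completeness and $G_{\delta}$-diagonal one concludes that $Y$ is metrizable, so that $Y$ is separable and Lindel\"of; the rectification on $G$ then transfers separability and metrizability from the remainder back to $G$ and $bG$.

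The main obstacle is the end of the $\sigma$-compact case: upgrading the information that ``$Y$ is \v{C}ech-complete, has a $G_{\delta}$-diagonal and countable cellularity, and contains a dense second-countable subspace'' to ``$Y$ is metrizable'', and then transferring separability and metrizability back from $Y$ to $G$ and $bG$ through the rectification. The genuinely external inputs---the metrizability of compact spaces with a $G_{\delta}$-diagonal, the metrization theorem for paracompact $p$-spaces with a $G_{\delta}$-diagonal, and the remainder-metrization results of \cite{A2009} and \cite{A2007}---carry the weight, whereas the cellularity bookkeeping, the identification of $Y$ as a $G_{\delta}$-subset of $bG$, and the triviality of the reverse implication are routine.
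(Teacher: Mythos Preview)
The paper does not prove this lemma: it is stated with a bare citation to \cite{A2009} (Arhangel'ski\u{\i}--Choban) and then used as a black box in the proof of the following theorem. There is thus no in-paper argument to compare your proposal against; you are attempting to reconstruct the external result.

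Your reconstruction has genuine gaps. First, the locally compact case is not finished: you only deduce that $Y$ is metrizable, but the lemma also asserts that $G$ and $bG$ are separable and metrizable. A locally compact paracompact rectifiable space need not be metrizable---any non-metrizable compact topological group (say $\{0,1\}^{\omega_{1}}$) gives $Y=\varnothing$, which trivially has a $G_{\delta}$-diagonal---so either the statement in \cite{A2009} carries an implicit non-local-compactness hypothesis that the present paper has dropped, or this case needs a separate argument you have not supplied. Second, in the countable-type case you invoke \cite{A2007}, but that paper is about topological groups; the rectifiable analogue you need is exactly what \cite{A2009} provides, and ``the rectifiable counterpart of the group-theoretic argument'' is not a proof. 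Third, and most seriously, in the $\sigma$-compact case your final step---``the rectification on $G$ then transfers separability and metrizability from the remainder back to $G$ and $bG$''---is a restatement of the conclusion, not an argument. Passing from ``$Y$ is separable metrizable'' to ``$G$ and $bG$ are separable metrizable'' for a paracompact rectifiable $G$ is precisely the content of the lemma, and it requires the specific machinery of \cite{A2009} (rectifiable $p$-spaces, Dugundji compacta, etc.) that you have not invoked.
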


\begin{theorem}
Suppose that $G$ is a paracompact rectifiable space, and
$Y=bG\setminus G$ has locally a $G_{\delta}$-diagonal if and only if
$Y$, $G$ and $bG$ are separable and metrizable spaces.
\end{theorem}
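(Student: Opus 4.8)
The plan is to reduce the whole statement to Lemma~\ref{l14}: since that lemma already equates a \emph{global} $G_{\delta}$-diagonal on the remainder $Y$ with the separability and metrizability of $Y$, $G$ and $bG$, it suffices to upgrade the \emph{local} $G_{\delta}$-diagonal hypothesis to a global one. The reverse implication is immediate, since a separable metrizable $Y$ has a $G_{\delta}$-diagonal and hence, a fortiori, locally a $G_{\delta}$-diagonal (and if $Y=\emptyset$ there is nothing to prove). I first dispose of the locally compact case: then $Y$ is compact, and a finite subcover of the witnessing neighbourhoods $\{U_{y}\}$ (each $\overline{U_{y}}$ compact with a $G_{\delta}$-diagonal, hence metrizable) exhibits $Y$ as a finite union of compact metrizable subspaces, so $Y$ is compact metrizable and Lemma~\ref{l14} applies. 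From now on I assume $G$ is non-locally compact, so that (by homogeneity of $G$) $Y$ is nowhere locally compact.

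The first step of the forward direction is to prove that $Y$ is Lindel\"of. Here I would invoke the Arhangel'skii-type dichotomy for remainders of paracompact rectifiable spaces --- every such remainder is pseudocompact or Lindel\"of (cf. \cite{A2009}) --- and rule out the pseudocompact alternative. For each $y\in Y$ fix an open $U_{y}\ni y$ whose closure $\overline{U_{y}}$ has a $G_{\delta}$-diagonal. If $Y$ were pseudocompact, then each $\overline{U_{y}}$, being regular closed in $Y$, would be pseudocompact; one then argues that $\overline{U_{y}}$ is in fact countably compact, and a countably compact space with a $G_{\delta}$-diagonal is compact and metrizable. Consequently $Y=\bigcup_{y}U_{y}$ would be locally compact, contradicting that $Y$ is nowhere locally compact. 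Hence $Y$ is Lindel\"of.

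The second step is the globalization. Being regular and Lindel\"of, $Y$ is paracompact, hence normal; I extract from $\{U_{y}\}$ a countable closed cover $\{F_{n}\}$ of $Y$ with each $F_{n}$ contained in some $U_{y_{n}}$. Each $F_{n}$, as a closed subspace of $\overline{U_{y_{n}}}$, has a $G_{\delta}$-diagonal, so by Ceder's criterion it carries a sequence of open covers $(\mathcal{A}^{n}_{k})_{k}$ with $\bigcap_{k}\mathrm{St}(z,\mathcal{A}^{n}_{k})=\{z\}$ for all $z\in F_{n}$. Choosing open $V_{n}$ with $F_{n}\subset V_{n}\subset\overline{V_{n}}\subset U_{y_{n}}$, I trim each member of $\mathcal{A}^{n}_{k}$ into $V_{n}$ and adjoin the open set $Y\setminus F_{n}$, obtaining an open cover $\mathcal{G}_{n,k}$ of $Y$. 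A direct check shows that the countable family $\{\mathcal{G}_{n,k}:n,k\in\mathbb{N}\}$ separates points, i.e. $\bigcap_{n,k}\mathrm{St}(y,\mathcal{G}_{n,k})=\{y\}$ for every $y$: if $y\in F_{n}$ and $z\neq y$, then either $z\notin V_{n}$ (so no trimmed set containing $y$ reaches $z$) or $z\in V_{n}\subset\overline{U_{y_{n}}}$ (so some $\mathcal{A}^{n}_{k}$ separates them inside $\overline{U_{y_{n}}}$, and trimming preserves this). Thus $Y$ has a $G_{\delta}$-diagonal, and Lemma~\ref{l14} yields that $Y$, $G$ and $bG$ are separable and metrizable.

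I expect the main obstacle to be the pseudocompact alternative in the first step. With only a (non-regular) $G_{\delta}$-diagonal available, pseudocompactness alone does not force compactness --- the Isbell--Mr\'owka space is pseudocompact with a $G_{\delta}$-diagonal yet not compact --- so the crux is to extract genuine countable compactness of the pieces $\overline{U_{y}}$ from the paracompactness of $G$, equivalently to pin down the correct dichotomy (``countably compact or Lindel\"of'') for remainders of paracompact rectifiable spaces. Once that structural input is secured, the globalization in the second step is routine.
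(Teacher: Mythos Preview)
Your self-diagnosis is correct: the step ``pseudocompact $\overline{U_y}$ is countably compact'' is the genuine gap, and it cannot be closed along the line you suggest --- as you note, Isbell--Mr\'owka blocks it. The paper sidesteps this by splitting on whether $Y$ itself is countably compact. If $Y$ is countably compact, then every closed $\overline{U_y}$ is countably compact, Chaber's theorem makes each one compact metrizable, $Y$ is locally compact --- contradiction, exactly your argument. If $Y$ is \emph{not} countably compact, the paper invokes the dichotomy from \cite[Theorem 3.1]{A2009} (Lindel\"of or pseudocompact) and in the residual pseudocompact-but-not-countably-compact case abandons the attempt to force local compactness of $Y$ altogether. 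Instead: the local $G_\delta$-diagonal makes every point of $Y$ a $G_\delta$-point, and in a pseudocompact Tychonoff space this upgrades to first countability; a countably infinite closed discrete set in $Y$ (witnessing non-countable-compactness) must accumulate in $bG$ at some $g\in G$, and first countability of $Y$ yields a countable $\pi$-base at $g$; by homogeneity and \cite{G1996}, $G$ is metrizable, hence of countable type, so $Y$ is Lindel\"of by Henriksen--Isbell after all. The missing idea, then, is that in the hard sub-case one changes target and proves $G$ metrizable directly via a $\pi$-base transfer, rather than squeezing compactness out of the local pieces of $Y$.

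Your Step 2 is essentially a direct proof of \cite[Lemma 11]{Liu2009}, which the paper simply cites once $Y$ is Lindel\"of. One small slip: you take the Ceder $G_\delta$-diagonal sequences on $F_n$, but your verification in the case $z\in V_n\setminus F_n$ tacitly needs them to separate arbitrary points of $\overline{U_{y_n}}$, which covers of $F_n$ need not do. Take the sequences on $\overline{U_{y_n}}$ instead (then trim into $V_n$) and the check goes through as written.
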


\begin{proof}
Claim 10: The space $Y$ is Lindel$\ddot{\mbox{o}}$f.

We can assume that $G$ is non-locally compact, and otherwise, the space $Y$ is
compact.

Case 1: The space $Y$ is countably compact.

Since $Y$ has locally a $G_{\delta}$-diagonal, the space $Y$ is locally
metrizable, and hence $Y$ is locally compact, which is a
contradiction with $Y$ is nowhere locally compact.

Case 2: The space $Y$ is non-countably compact.

By \cite[Theorem 3.1]{A2009}, the space $Y$ is Lindel$\ddot{\mbox{o}}$f or
pseudocompact. So we assume that $Y$ is pseudocomapact. Since each
point in $Y$ is a $G_{\delta}$-point, the space $Y$ is first countable. Since
$Y$ is non-countably compact, it follows, by a standard argument,
that $G$ has a countable $\pi$-base at some point which is an
accumulation point of some countable subset of $Y$. Hence, the space $G$ is
metrizable \cite{G1996}. Therefore, the space $Y$ is Lindel$\ddot{\mbox{o}}$f,
since $G$ is of countable type.

It follows from \cite[Lemma 11]{Liu2009} and the Claim 10 that $Y$
has a $G_{\delta}$-diagonal, and therefore, it is easy to see that
the theorem is verified by Lemma~\ref{l14}.
\end{proof}

\begin{theorem}
Suppose that $G$ is a rectifiable space, and $Y=bG\setminus G$ is a
remainder of $G$. If $Y$ has countable pseudocharacter, then at
least one of the following conditions satisfies
\begin{enumerate}
\item the space $G$ is a strong $p$-space;

\item the space $Y$ is first countable.
\end{enumerate}
\end{theorem}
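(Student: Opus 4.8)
The plan is to mirror the proof of Theorem~\ref{t3} almost verbatim, since the first part of that argument never used the paratopological group structure at all: it relied only on the countable pseudocharacter of $Y$ together with the compactness of $bG$. So I would begin by assuming that condition (2) fails, i.e.\ that $Y$ is \emph{not} first countable, and fix a point $y_{0}\in Y$ at which $Y$ fails to be first countable. Because $Y$ has countable pseudocharacter, $y_{0}$ is a $G_{\delta}$-point of $Y$, say $\{y_{0}\}=\bigcap_{n}(V_{n}\cap Y)$ with each $V_{n}$ open in $bG$ and $y_{0}\in V_{n}$. Using the regularity of the compact Hausdorff space $bG$, I would shrink the $V_{n}$ to a decreasing sequence $\{O_{n}\}$ with $y_{0}\in O_{n+1}$, $\overline{O_{n+1}}\subset O_{n}\cap V_{n}$, so that $F:=\bigcap_{n}O_{n}=\bigcap_{n}\overline{O_{n}}$ is a compact subset of $bG$ having $\{O_{n}\}$ as a countable neighborhood base and satisfying $F\cap Y=\{y_{0}\}$.

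Next I would observe that $F\neq\{y_{0}\}$: were $F=\{y_{0}\}$, the family $\{O_{n}\}$ would be a countable neighborhood base of $y_{0}$ in $bG$, hence $\{O_{n}\cap Y\}$ would be one in $Y$, contradicting the choice of $y_{0}$. Thus $F\setminus\{y_{0}\}$ is nonempty, and since $F\cap Y=\{y_{0}\}$ we have $F\setminus\{y_{0}\}\subset G$. Fixing $b\in F\setminus\{y_{0}\}$ and separating $b$ from $y_{0}$ by disjoint open sets in $bG$, I would intersect $F$ with a closed neighborhood of $b$ missing $y_{0}$ to extract a nonempty compact subset $B\subset F$ with $y_{0}\notin B$ and with countable character in $bG$. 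Since $B\subset F$ and $B\cap Y=\emptyset$, we get $B\subset G$. All of this is routine and purely topological, exactly as in Theorem~\ref{t3}.

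The final step is where rectifiability enters, and where I expect the main obstacle to be. For a paratopological group, Theorem~\ref{t3} invokes \cite[Proposition 4.1]{A2009} to pass from the existence of a single nonempty compact subset of countable character in $bG$ to the conclusion that $G$ is of countable type; the underlying mechanism is that the group translations, which can be made to interact well with $bG$, let one slide the good compact set $B$ throughout $G$. In a rectifiable space the ``translations'' $p(x,\cdot)$ are homeomorphisms of $G$ but need not extend to homeomorphisms of $bG$, so one cannot upgrade countable character of $B$ to countable type of $G$ in the same way. What does survive is weaker: I would appeal to the rectifiable-space counterpart of \cite[Proposition 4.1]{A2009} from \cite{A2009}, which yields that the presence of a nonempty compact subset of countable character in $bG$ forces $G$ to be a strong $p$-space. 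The crux is thus the availability (or re-derivation) of that rectifiable analogue, after which condition (1) follows at once, completing the proof.
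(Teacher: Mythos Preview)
Your approach is essentially the paper's own. The paper's proof says exactly that one argues ``in a similar way [as] in Theorem~\ref{t3}'' to obtain that $G$ is of countable type or $Y$ is first countable, and then invokes \cite[Corollary~2.8]{A2009} to upgrade countable type to strong $p$-space in the rectifiable setting.

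One clarification is worth making. Your worry that the translations $p(x,\cdot)$ of a rectifiable space need not extend to $bG$, and that therefore one cannot obtain countable type from the single compact $B$ of countable character, is misplaced: \cite{A2009} is precisely the Arhangel'ski\u{\i}--Choban paper on remainders of \emph{rectifiable} spaces, and its Proposition~4.1 (the result already cited in Theorem~\ref{t3}) applies to rectifiable $G$ to give countable type directly. So the argument of Theorem~\ref{t3} goes through verbatim here to yield countable type, with no need to posit a separate ``rectifiable counterpart'' jumping straight to strong $p$-space. The extra ingredient over Theorem~\ref{t3} is only the final step, \cite[Corollary~2.8]{A2009}, which says that a rectifiable space of countable type is a strong $p$-space. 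In short: split your last paragraph into two citations---Proposition~4.1 for countable type, then Corollary~2.8 for strong $p$-space---and the proof is complete.
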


\begin{proof}
One can prove in a similar way in Theorem~\ref{t3} that $G$ is of
countable type or $Y$ is first countable. By \cite[Corollary
2.8]{A2009}, it follows that $G$ is a strong $p$-space or $Y$ is a first countable
space.
\end{proof}

The following question is still open.

\begin{question}
Let $G$ be a topological group. If $G$ has a first countable
remainder, is $G$ metrizable?
\end{question}

It is well known that there exists a non-metrizable paratopological group with a
first countable remainder. In fact, Alexandorff's
double-arrow space is a Hausdorff compactification of Sorgenfrey
line, its remiander is still a copy of Sorgenfrey line. However, we
have the following question.

\begin{question}
Let $G$ be a rectifiable space. If $G$ has a first countable
remainder, is $G$ metrizable?
\end{question}

Next we give some partial answers to this question.

The proofs of the following two theorem are identical to the proofs
of Theorem 2.1 and 2.2 in \cite{A20071}, respectively.

\begin{theorem}
Let $G$ be a non-compact rectifiable space such that $G^{\omega}$
has a first countable remainder. Then $G$ is metrizable.
\end{theorem}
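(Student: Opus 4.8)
The plan is to deduce metrizability of $G$ from that of $G^{\omega}$, and to obtain the latter by exhibiting a single point of $G^{\omega}$ with a countable $\pi$-base. First I would record the structural facts. A countable power of a rectifiable space is again rectifiable, so $G^{\omega}$ is a homogeneous rectifiable space; and since $G$ is non-compact, every basic open box in $G^{\omega}$ has non-compact closure (it projects onto $G$ in infinitely many coordinates), so $G^{\omega}$ is nowhere locally compact. Hence the remainder $Y=bG^{\omega}\setminus G^{\omega}$ is nonempty and dense in $bG^{\omega}$. As $G$ sits in $G^{\omega}$ as a closed coordinate copy, it suffices to prove that $G^{\omega}$ is metrizable; and because $G^{\omega}$ is rectifiable, by the metrization theorem \cite[Theorem~3.2]{G1996} it suffices to produce one point of $G^{\omega}$ carrying a countable $\pi$-base, homogeneity then upgrading this to metrizability of the whole space. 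Note that $Y$ being first countable means every point of $Y$ is a $G_{\delta}$-point and $Y$ has countable $\pi$-character everywhere.

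Next I would split according to the dichotomy \cite[Theorem~3.1]{A2009}: the remainder $Y$ is either Lindel\"of or pseudocompact. In the Lindel\"of case, Henriksen--Isbell (Theorem~\ref{t14}) gives that $G^{\omega}$ is of countable type, so there is a compact set $K\ni e$ with a countable outer base in $G^{\omega}$. Combining the countable character of $K$ with the countable $\pi$-character inherited from the dense, first-countable remainder $Y$, one transfers a countable $\pi$-base to the point $e$, and \cite{G1996} finishes. The self-similarity $G^{\omega}\cong(G^{\omega})^{\omega}$ together with nowhere-local-compactness (which forces $K$ to have empty interior) are precisely what make this transfer valid, exactly as in Theorem~2.1 of \cite{A20071}.

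In the pseudocompact case I would distinguish whether $Y$ is countably compact. If $Y$ is not countably compact, then (since $Y$ is first countable) the \emph{standard argument} used in the proof of Claim~10 above applies verbatim to $G^{\omega}$: it yields a countable $\pi$-base of $G^{\omega}$ at a point that is an accumulation point of a countable subset of $Y$, whence $G^{\omega}$ is metrizable by \cite{G1996}. If $Y$ is countably compact, one must rule this out against the nowhere-local-compactness of $Y$, again invoking the product self-similarity to reproduce a feathering compactum in each coordinate. In all cases $G^{\omega}$, and therefore its factor $G$, is metrizable.

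I expect the main obstacle to be exactly the two transfer steps: passing from \emph{countable type of $G^{\omega}$} (Lindel\"of case), and from a \emph{pseudocompact, countably compact $Y$}, to a countable $\pi$-base at a point of $G^{\omega}$. Both are where the power $\omega$ is indispensable, since $G^{\omega}\cong(G^{\omega})^{\omega}$ is what lets one play a compact set of countable character against the countable $\pi$-character forced by the first-countable remainder; these are precisely the delicate points handled in the proofs of Theorems~2.1 and~2.2 of \cite{A20071} that the present argument mirrors.
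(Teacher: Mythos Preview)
Your proposal is correct and takes essentially the same approach as the paper: the paper gives no independent argument but simply states that the proof is identical to that of Theorem~2.1 in \cite{A20071}, and your sketch is precisely an outline of how that argument runs once one replaces ``topological group'' by ``rectifiable space'' (using \cite{G1996} for the metrization step and \cite[Theorem~3.1]{A2009} for the Lindel\"of/pseudocompact dichotomy). You have, in fact, supplied more detail than the paper itself.
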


\begin{theorem}
A rectifiable space $G$ is metrizable if there is a nowhere locally
compact metrizable space (or first countable space) $M$ such that
the product space $G\times M$ has a first countable remainder.
\end{theorem}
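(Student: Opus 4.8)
The plan is to isolate a single local-countability invariant that both forces metrizability of $G$ and can be detected through the remainder. First I would record the one structural consequence of the hypotheses used throughout: since $M$ is nowhere locally compact, no point of $X=G\times M$ has a compact neighbourhood (a compact neighbourhood of $(g,m)$ would project onto a compact neighbourhood of $m$), so $X$ is itself nowhere locally compact. Hence $X$ has empty interior in any compactification $bX$, the remainder $Y=bX\setminus X$ is dense in $bX$, and the first countability of $Y$ therefore constrains the behaviour of $X$ ``at infinity'' rather than on a negligible set.

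Next I would reduce the whole problem to producing one point of $X$ of countable $\pi$-character. On one side, the projection $\pi_{1}\colon G\times M\to G$ onto the first factor sends a local $\pi$-base at $(g_{0},m_{0})$ to a local $\pi$-base at $g_{0}$: given open $V\ni g_{0}$ in $G$, some $\pi$-base element $O$ lies in $V\times M$, whence $\pi_{1}(O)\subseteq V$ is non-empty and open. So $\pi\chi((g_{0},m_{0}),X)\le\omega$ gives $\pi\chi(g_{0},G)\le\omega$, and by homogeneity (the homeomorphism $q(g_{0},\cdot)$ carries $g_{0}$ to $e$) we obtain $\pi\chi(e,G)\le\omega$. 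On the other side, a countable $\pi$-base at $e$ already yields metrizability by exactly the device used for bisequential rectifiable spaces: given open non-empty $B_{n}$ cofinal in the neighbourhoods of $e$, for each $U$ choose an open $V\ni e$ with $q(V,V)\subseteq U$ and some $B_{n}\subseteq V$; fixing $b\in B_{n}$, the set $q(b,B_{n})$ is open, contains $q(b,b)=e$, and satisfies $e\in q(b,B_{n})\subseteq q(B_{n},B_{n})\subseteq q(V,V)\subseteq U$, so $\{q(B_{n},B_{n})\}$ is a countable neighbourhood base at $e$. Thus $G$ is first countable, hence metrizable by \cite[Theorem 3.2]{G1996}.

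It remains to extract a point of $X$ of countable $\pi$-character from the first countability of $Y$, and this is the genuine content. Fixing $y_{0}\in Y$, I would use the normality of the compact space $bX$ to choose a decreasing sequence of open sets $U_{n}$ of $bX$ with $\overline{U_{n+1}}\subseteq U_{n}$, with $\{U_{n}\cap Y\}$ a neighbourhood base of $y_{0}$ in $Y$, and with $K=\bigcap_{n}\overline{U_{n}}$ meeting $Y$ only in $y_{0}$. Then $K$ is a compact $G_{\delta}$-subset of $bX$, the set $K\setminus\{y_{0}\}$ is a non-empty $\sigma$-compact subset of $X$ clustering at $y_{0}$, and each $U_{n}\cap X$ is non-empty and open in $X$ because $X$ is dense. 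Since $X$ is nowhere locally compact, $K\setminus\{y_{0}\}$ can contain no non-empty open subset of $X$; it is precisely this tension between the local compactness of $K\setminus\{y_{0}\}$ and the nowhere local compactness of $X$, together with the first countability of the factor $M$ (used to refine the sets $U_{n}\cap X$ into product-open sets whose $G$-coordinates remain countable), that should deliver the desired point of countable $\pi$-character.

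The step I expect to be the main obstacle is exactly this last extraction: first countability of $Y$ is information about a point of the remainder, whereas what is needed is a point of $X$, and the descending sequence $\{U_{n}\cap X\}$ pinches toward $y_{0}\in Y$ rather than toward any point of $X$, so genuinely topological work — and, I expect, an essential use of the first countability of $M$ — is required to convert it into a $\pi$-base inside $X$. Everything surrounding this core is formal: the reduction to countable $\pi$-character, the projection estimate, and the final metrization via \cite[Theorem 3.2]{G1996} use only homogeneity and the rectification operation $q$. This is why the rectifiable case runs in exact parallel with the topological-group case, and the argument is identical to that of Theorem~2.2 in \cite{A20071}, the group multiplication being replaced throughout by the operations $p$ and $q$.
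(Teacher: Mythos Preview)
Your proposal is correct and takes essentially the same approach as the paper: the paper's proof consists solely of the remark that the argument is identical to that of Theorem~2.2 in \cite{A20071}, and you reach exactly the same conclusion, having made explicit the one adaptation genuinely required in the rectifiable setting---namely, that countable $\pi$-character at $e$ upgrades to a countable neighbourhood base via the sets $q(B_{n},B_{n})$, after which \cite[Theorem 3.2]{G1996} applies. Your sketch of the extraction of a point of countable $\pi$-character from the first-countable remainder is somewhat loose (in particular, the assertion that $K\setminus\{y_{0}\}\neq\emptyset$ needs the case split used in the analogous argument of \cite{A20071}), but since you explicitly flag this as the main obstacle and defer to \cite{A20071} for it---precisely as the paper does---there is no gap relative to the paper's own treatment.
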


\section{Open problems}
Here, we list some open problems about rectifiable spaces, which
mainly appear in \cite{A2009} and \cite{A2008}.

\begin{question}\cite[Open problem 5.7.6]{A2008}
Suppose that $G$ is a (regular, Tychonoff) paratopological group
which is also a rectifiable space. Is $G$ homeomorphic to a
topological group?
\end{question}

\begin{question}\cite[Open problem 5.7.7]{A2008}
Is every regular rectifiable space Tychonoff?
\end{question}

\begin{question}\cite[Open problem 5.7.8]{A2008}
Is every regular rectifiable space of countable pseudocharacter
submetrizable? Is it Tychonoff?
\end{question}

\begin{question}\cite[Problem 5.9]{A2009}
Is every rectifiable $p$-space paracompact? What if the space is
locally compact?
\end{question}

\begin{question}\cite[Problem 5.10]{A2009}
Is every rectifiable $p$-space with a countable Souslin number
Lindel$\ddot{\mbox{o}}$f? What if we assume the space to be
separable? Separable and locally compact?
\end{question}

\begin{question}\cite[Problem 5.11]{A2009}
Is every rectifiable $p$-space a $D$-space?
\end{question}

\begin{question}
Is every sequential rectifiable space with a point-countable
$k$-network a paracompact space?
\end{question}

It is easy to see that Corollary~\ref{c0} gives a partial answer to
this question.

\begin{question}
Let $G$ be a rectifiable. If $F, P$ are compact and closed subsets
of $G$ respectively,  is $P\cdot F$ or $F\cdot P$ closed in $G$?
\end{question}

Obviously, if both $F, P$ are compact subset of $G$, then $P\cdot F$
and $F\cdot P$ are compact in $G$ since $p$ is a continuous map from
$G\times G$ onto $G$.

\bigskip

{\bf Acknowledgements}. We wish to thank
the referee for the detailed list of corrections, suggestions to the paper, and all her/his efforts
in order to improve the paper.

\bigskip

\end{document}